\newtheorem{theorem}{Theorem}[section]
\newtheorem{lemma}{Lemma}[section]
\newtheorem{proposition}{Proposition}[section]
\newtheorem{corollary}{Corollary}[section]
\newtheorem{remark}{Remark}[section]
\newtheorem{example}{Example}[section]
\newtheorem{hyp}{Hypothesis}[section]
\numberwithin{equation}{section}
\newcommand{\I}{\ensuremath{\mathbb{I}}}
\newcommand{\dR}{\ensuremath{\mathbb{R}}}
\newcommand{\dZ}{\ensuremath{\mathbb{Z}}}
\newcommand{\cA}{\ensuremath{\mathcal{A}}}
\newcommand{\cD}{\ensuremath{\mathcal{D}}}
\newcommand{\cE}{\ensuremath{\mathcal{E}}}
\newcommand{\cM}{\ensuremath{\mathcal{M}}}
\newcommand{\cN}{\ensuremath{\mathcal{N}}}
\newcommand{\cU}{\ensuremath{\mathcal{U}}}
\newcommand{\cV}{\ensuremath{\mathcal{V}}}
\newcommand{\inw}[1]{\stackrel{\circ}{#1}}
\def\qed{\hfill\rule{2mm}{2mm}}
\def\disp{\displaystyle}
\def\un{\left[\begin{array}{c} \mathbf{1} \\
\mathbf{1}\end{array}\right]}
\definecolor{labelkey}{rgb}{0.6,0,1}
\begin{document}

\title{Combining losing games into a winning game}

\author{Bruno R\'{e}millard}

\address{CRM, GERAD and Department of Decision Sciences, HEC Montr\'{e}al,\\
3000, che\-min de la C\^{o}\-te-Sain\-te-Ca\-the\-ri\-ne,
Montr\'{e}al (Qu\'{e}\-bec), Canada H3T 2A7}
\email{bruno.remillard@hec.ca}

\author{Jean Vaillancourt}
\address{Department of Decision Sciences, HEC Montr\'{e}al,\\
3000, che\-min de la C\^{o}\-te-Sain\-te-Ca\-the\-ri\-ne,
Montr\'{e}al (Qu\'{e}\-bec), Canada H3T 2A7} \email{jean.vaillancourt@hec.ca}
\thanks{Partial funding in support of this work was provided by the Natural
Sciences and Engineering Research Council of Canada and  the Fonds
qu\'e\-b\'e\-cois de la re\-cher\-che sur la na\-tu\-re et les
tech\-no\-lo\-gies.}

\begin{abstract}
Parrondo's
paradox is extended to
regime switching random walks in random environments.
The paradoxical behavior of the resulting random walk is explained
by the effect of the random environment. Full characterization of the asymptotic behavior is achieved in terms of  the dimensions of some random subspaces occurring in Oseledec's theorem. The regime switching mechanism gives our models a richer and more complex asymptotic behavior than the simple random walks in random environments appearing in the literature, in terms of transience and recurrence.
\end{abstract}

\keywords{Parrondo's paradox; random walks; random environments; Oseledec's theorem}

\subjclass[2010]{Primary 60J15, 60K37, 60B20;  Secondary 82A42, 82C41, 91A15, 91A60.}


\maketitle

\section{Introduction}

To illustrate what is now known as Parrondo's paradox
\citep{Harmer/Abbott:1999}, consider the following games:
\begin{itemize}
\item Game A: The fortune $X_n$ of the player after $n$ independent games is
$$
X_{n} =  \left\{\begin{array}{ll} X_{n-1} + 1 & \mbox{ w. pr. }  p, \\
X_{n-1} - 1 & \mbox{ w. pr. }  1- p, \end{array}\right., \quad n\ge
1.
$$

\item Game B:  The fortune $Y_n$ of the player after $n$ games is
given by
$$
Y_{n} =  \left\{\begin{array}{lll}
 Y_{n-1}+ 1 & \mbox{ w. pr. }  g(Y_{n-1}), \\Y_{n-1} - 1 & \mbox{ w. pr. }  1- g(Y_{n-1}),
 \end{array}\right. , \quad n\ge 1,
$$
where  $g$ is a 3-periodic function on $\dZ$ such that $g(0) = p_1$ and
$g(1)=g(2)=p_2$.
\end{itemize}
It is well-known that Game A is fair if and only if $p=1/2$. In
fact, if $p\neq 1/2$, the Markov chain $X$ is transient and
 $\lim_{n\to\infty} X_n = -\infty$ if $p < 1/2$, while  $\lim_{n\to\infty} X_n = +\infty$ if $p > 1/2$. When $p=1/2$, $X$ is recurrent and
$$
 P(\liminf X_n = -\infty  \mbox{ and } \limsup X_n = +\infty)=1.
 $$

For Game B, the process $Y$ can be seen as a particular case of a
random walk in a random environment; in fact, the space $E$ of environments has $3$ elements, i.e., $E =\{T^k g; k=0,1,2\}$ with $T^k g(x) = g(x+k)$. For more details on periodic and almost periodic environments, see, e.g.,  \citet[Examples 1-2]{Remillard/Dawson:1989}.

\citet{Solomon:1975}
studied the special case  behavior of random walks in a random environment when the latter are
independent and identically distributed (i.i.d.) which does not cover the periodic environment. Instead, one can rely on  \citet[Theorem
2.1]{Alili:1999}, to conclude that the process $Y$ is recurrent
if and only if $\mu=1$, where
\begin{equation}\label{eq:mu}
 \mu = \frac{(1-p_1)(1-p_2)^2}{p_1 p_2^2}.
 \end{equation}
As a result, $
P\left( \liminf Y_n = -\infty  \mbox{ and } \limsup Y_n = +\infty\right)=1$.
Otherwise, when $\mu \neq 1$, $Y_n$ is transient and
$\lim_{n\to\infty} Y_n = -\infty$ if $\mu > 1$,  while
$\lim_{n\to\infty} Y_n = +\infty$ if $\mu <1$. For example, if $p_1=
1/10$ and $p_2 = 3/4$, then $\mu=1$. If $p_1 < 1/10$ and $p_2 <
3/4$, then $\mu>1$.

Following \citet{Harmer/Abbott:1999}, suppose that $p=0.499$, $p_1 = 0.099$ and $p_2 =
0.749$. Then,
according to the previous observations, if a player always plays
Game A or Game B, her fortune will tend to $-\infty$ with
probability one. However, if she plays Game A twice, then Game B
twice and so on (Game C),  or if she chooses the game at random with
probability $1/2$ (Game D), her fortune will tend to $+\infty$. This
is Parrondo's paradox and it is illustrated in Figure
\ref{fig:parrondo}.

\begin{figure}[h!]
\begin{center}
\includegraphics[scale = 0.35]{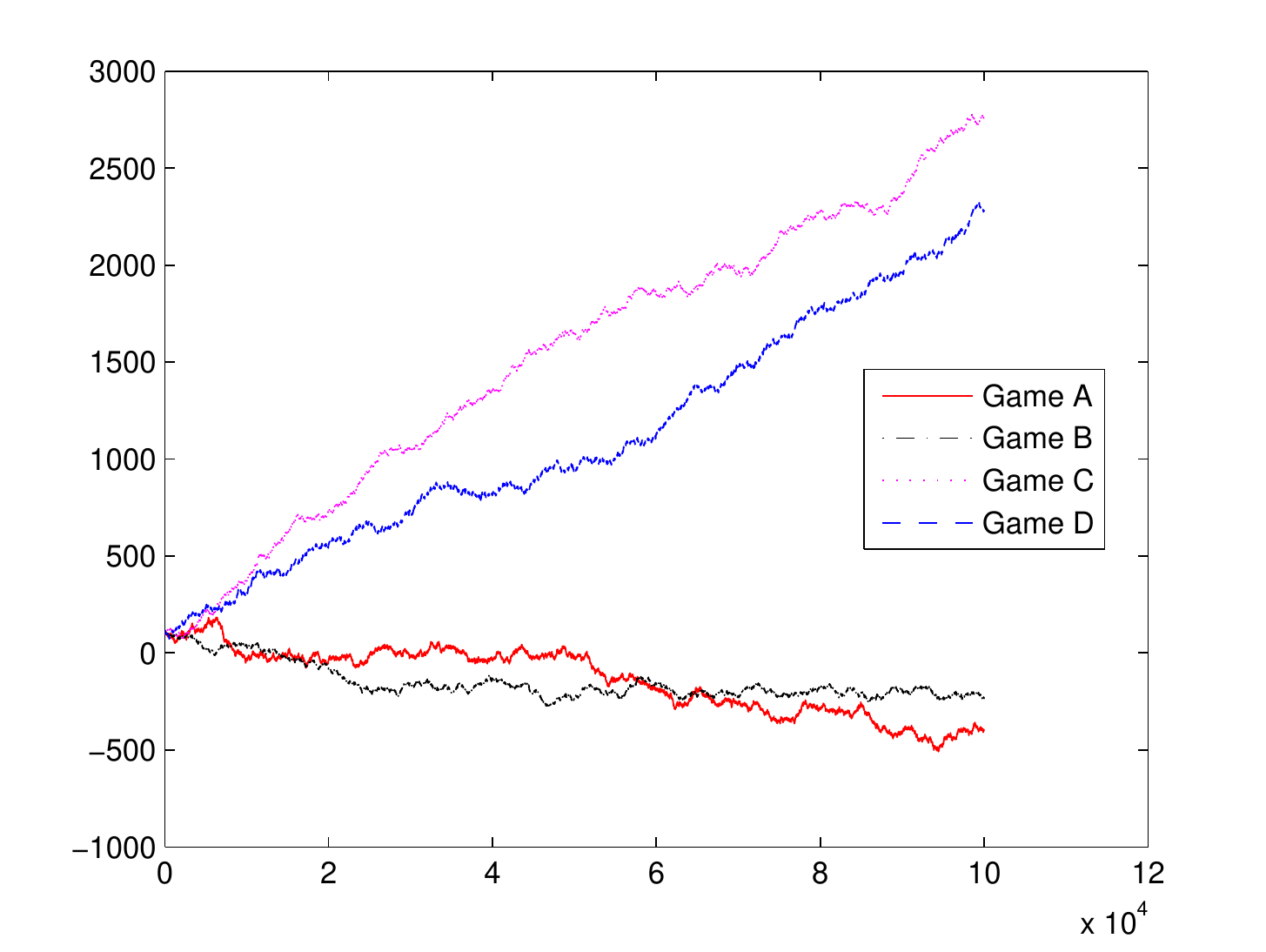}
\caption{Evolution of the fortune of a player starting at $100\$$
over 10,000 games, where $p=0.499$, $p_1 = 0.099$ and $p_2 =
0.749$.}\label{fig:parrondo}
\end{center}
\end{figure}

Now consider Game C' , where the player alternates between Game A and Game B, i.e.,  she plays Game A once, then Game B once, and so on. What happens in this case? The answer will be given at the end of Example \ref{ex:periodic2}.

 Games A and B are particular cases of random walks in a
random environment, while Games C and D are examples of regime
switching Markov chains in random environment. The aim of this paper
is to study the asymptotic behavior of the latter. One of the first rigorous
work on this problem is \citet{Pyke:2003}, who studied some particular
cases of random environment, namely the so-called periodic case,
where each random walk is like in Game B, while the player chooses
at random between two games. The author also consider some
``deterministic'' mixtures, namely the cases studied in
\citet{Harmer/Abbott:1999}.
The earliest examples of games exhibiting this paradoxical behavior when combined can be found in
\citet{Durrett/Kesten/Lawler:1991}. Those are not covered in our setting.


In what follows, the player chooses the next game to play according to a
finite Markov chain, and each game is a random walk in a random
environment, extending the work of \citet{Harmer/Abbott:1999} and
\citet{Pyke:2003}.

In Section \ref{sec:RSRWRE}, the model, which is basically a regime switching random walk in random environments, is described and one of the main characterization results is stated in Theorem \ref{thm:main}, namely that the asymptotic behavior of these models does not always reflect that of independent simple random walks in random environments. Without regime switching, as shown in \citet{Alili:1999}, they are transient, meaning that they  converge to either of $\pm \infty$ with probability $1$, for almost every environment, or they are recurrent, meaning that the limsup and liminf converge respectively  to $\pm \infty$ with probability $1$, for almost every environment.

With regime switching, this is not always the case and the full mathematical analysis of this generalization is the main contribution of this paper. Understanding the asymptotic behavior  under regime switching is key to the development of improved tools and methods relevant to gaming strategies and control issues, when dealing with potential applications in various contexts such as financial market uncertainty \citet{Fink/Klimova/Czado/Stoeber:2017}, investment portfolio solvency \citet{Abourashchi/Clacher/Freeman/Hillier/Kemp/Zhang:2016} and hypothesis testing under richer, more dynamic experimental designs \citet{Carter/Hansen/Steigerwald:2016}.

In Section \ref{sec:hitting}, the possible cases are fully characterized in terms of the rank of the transition matrix that governs regime switching and the dimensions of some random subspaces occurring in Oseledec's ergodic theorem. These results generalize the best known particular cases : (i) when the games are chosen independently, the transition matrix has rank $1$ and the results can be recovered as well from standard arguments applied to random walks in random environments \citep{Alili:1999}; (ii) the periodic choices studied by many authors, starting with \citet{Pyke:2003}, where the transition matrices have full rank; and continuing in a series of papers beginning with \citet{Ethier/Lee:2009}, where the Markov chains associated with the matrices are irreducible. Note that  \citet{Ethier/Lee:2009} consider several other structural choices not covered here. The main contribution in the current paper is in the coverage of new cases including instances of reducible Markovian switching regimes, avoided so far in the literature because of their technical intractability, as exemplified in Remark \ref{rem:bolthausen}.

Finally, the proofs of the results can be found in a series of appendices. They are inspired by the results of \citet{Key:1984} who studied random walks in random environments with bounded increments but no regime switching. The results of \citet{Key:1984} were later refined by several authors, notably
\citet{Bolthausen/Goldsheid:2000}, \citet{Keane/Rolles:2002}, and \citet{Bremont:2002, Bremont:2009}.



\section{Regime switching random walks in random environments}\label{sec:RSRWRE}

We first describe the model and then study its
asymptotic behavior.

\subsection{Model}\label{ssec:model}

First, let $(E,\cE,P)$ be a complete probability space with a measure preserving transformation $T$, assumed to be $\cE$-measurable and ergodic, i.e., the $T$-invariant sigma-field is trivial.
Next, for any $\alpha\in \{1,\ldots,
m\}$,  and any  $k \in \mathbb Z$,   $p^{(\alpha)}_k$ are $(0,1)$-valued variables, where $p_k^{(\alpha)}(Te) = p^{(\alpha)}_{k+1}(e)$, $e \in E$. Hence, the processes
$p^{(\alpha)}$ are stationary and ergodic.

Next, for a given $\alpha\in \{1,\ldots, m\}$,  let $X^{(\alpha)}_n$
be the nearest neighbor random walk in a random environment $e\in E$ defined by the process
$p^{(\alpha)}$, i.e., its so-called quenched law is given by
$$
P\left( X^{(\alpha)}_{n} = k+1 |\; X^{(\alpha)}_{n-1} =k ,\cE\right)(e)
= p^{(\alpha)}_k(e), \; k \in \dZ, \; n\ge 1.
$$

These random walks will be the fortunes of the player as she chooses each game.
Her decision process is based on the Markov chain $G$ on
$\{1,\ldots,m\}$, with transition matrix $Q$.  For example, in Game
C, one can choose $m=4$, $p^{(1)}=p^{(2)}$, $p^{(3)}=p^{(4)}$, where
$p^{(1)}$ is the (deterministic) process determined by  Game  A,
$p^{(3)}$ is the stationary ergodic process determined by  Game  B,
and
$Q = \left(\begin{array}{cccc} 0 & 1 & 0 & 0 \\
0 & 0 & 1 & 0 \\ 0 & 0 & 0 & 1\\ 1 & 0 & 0 & 0\end{array}\right)$.
For Game D, $m=2$ and $Q = \left(\begin{array}{cc} 0.5 & 0.5\\
0.5 & 0.5 \end{array}\right)$.
Further note that for Game C', then $Q = \left(\begin{array}{cc} 0 & 1\\ 1 & 0\end{array}\right)$, $p^{(1)}$ is the (deterministic) process determined by  Game  A,
 and $p^{(2)}$ is the stationary ergodic process determined by  Game  B.

Finally, for any $n\ge 1$, the quenched law of the regime-switching (nearest-neighbor) random walk $X_n$ is defined by 
$$P(X_n=k+1|X_{n-1}=k,G_n=\alpha,\cE)(e) = p_k^{(\alpha)}(e),\quad e\in E, \;k\in \dZ, \; \alpha\in\{1,\ldots,m\}.
$$
Under these assumptions, given $e\in E$,
$(G_n,X_n)$ is an homogeneous Markov chain on $\{1,\ldots, m\}\times
\dZ$ with transition matrix
$$
P_{\alpha i, \beta j}(e) = Q_{\alpha
\beta}P^{(\beta)}_{ij}(e), \quad \alpha,\beta \in \{1,\ldots,
m\},\; i,j \in \dZ,
$$
where $P^{(\beta)}_{i,i+1} = p^{(\beta)}_i $ and
$P^{(\beta)}_{i,i-1}= 1-p^{(\beta )}_{i} = q^{(\beta)}_i $.

For a given environment $e$, the oft-cited \citet{Cogburn:1980} called
$X$ a Markov chain in a random environment, the sequence $G_n$
playing, in his case, the role of the ``environment''. To avoid confusion, we will not make use of this terminology here, the choice for the next game $G_n$ - the regime switching mechanism - depending solely on the previous game selection $G_{n-1}$, independently of the environment.

\begin{remark}\label{rem:bolthausen}
Note that our setting is a particular case of a random walk in a random environment on a strip introduced in \citet{Bolthausen/Goldsheid:2000}. This is also the case for the edge-reinforced random walks in \citet{Keane/Rolles:2002} where their method of proof is similar to ours. However, in both cases, their results cannot be applied in general here since they assumed that the resulting Markov chain $(G_n,X_n)$ has only one communication class \citep[Condition C]{Bolthausen/Goldsheid:2000}, i.e., the Markov chain is almost surely irreducible \citep[Remark 2]{Bolthausen/Goldsheid:2000}. One cannot simply separate the non-communicating classes and use their results since we show that for the reducible case presented in Example \ref{ex:counter}, the asymptotic behavior depends on the random environment, a prohibited phenomenon in the results of \citet{Bolthausen/Goldsheid:2000} and  \citet{Keane/Rolles:2002}. Another example of a reducible case is Game C', where there are two closed classes: $C_0 = \{(\alpha,i); \alpha + i \mbox{ is
even}\}$ and $C_1 = \{(\alpha,i); \alpha + i \mbox{ is odd}\}$.  For this game, we also show in Appendix \ref{app:bolthausen} that the main result of \citet{Bolthausen/Goldsheid:2000} does not apply. 
\end{remark}

One is interested in the asymptotic behavior of process $X$ describing the evolution of the player's fortune.
More precisely, one would like to find conditions under which the
so-called Parrondo's paradox holds, i.e., for any starting point $(\alpha,i)\in
\{1,\ldots,m\}\times \dZ$, and almost every environment $e$,  
$$
P^e_{\alpha
i}\left(\disp\lim_{n\to\infty} X_n =+\infty\right)= P\left(\left. \disp\lim_{n\to\infty} X_n =+\infty\right |\cE, G_0=\alpha,X_0=i\right)(e)=1,
$$
while 
$$
P^e_i\left(\disp\lim_{n\to\infty} X_n^{(\alpha)}
= -\infty \right)= 
P\left(\left. \disp\lim_{n\to\infty} X_n^{(\alpha)}
= -\infty\right|\cE, X_0=i\right)(e)=1.
$$ 

For $\alpha\in \{1,\ldots,m\}$, define the (stationary ergodic)
process $\sigma^{(\alpha)}$ by $\sigma^{(\alpha)}_i =
\frac{q^{(\alpha)}_i}{p^{(\alpha)}_i}$, $i\in \dZ$. Then the
asymptotic behavior of $X^{(\alpha)}$ is completely determined by
the expectation of $\log{\sigma^{(\alpha)}_0}$, as  proven in
\citet[Theorem 2.1]{Alili:1999}.

\begin{theorem}\label{thm:alili} Let $\alpha\in \{1,\ldots,m\}$ be
given and suppose that  $u=E\left(\log{\sigma_0^{(\alpha)}}\right)$
is well defined, with values in $[-\infty,+\infty]$. If $u>0$, then
for any $i\in\dZ$,
$$
P^e_{i}\left(\lim_{n\to\infty} X_n^{(\alpha)} = -\infty
\right)=1, \; e \mbox{ a.s.}
$$
If $u<0$, then for any $i\in\dZ$,
$$
P^e_{i}\left( \lim_{n\to\infty} X_n^{(\alpha)}  = +\infty
\right)=1, \; e \mbox{ a.s.}
$$
Finally, if $u=0$, then for any $i\in\dZ$,
$$
P^e_{i}\left( \liminf_{n\to\infty} X_n^{(\alpha)}  = -\infty ,
\limsup_{n\to\infty} X_n^{(\alpha)}  = +\infty \right)=1, \;e
\mbox{ a.s.}
$$
\end{theorem}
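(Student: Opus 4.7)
\textbf{Proof plan for Theorem~\ref{thm:alili}.}  My plan is to reduce the trichotomy to the divergence/convergence of two explicit series of products of the $\sigma_j^{(\alpha)}$'s and then to apply Birkhoff's ergodic theorem to the stationary ergodic sequence $\{\log\sigma_j^{(\alpha)}\}_{j\in\dZ}$.  Fix $\alpha$ and an environment $e$, and suppress the superscript.  Under the quenched law, $X$ is a nearest-neighbour birth--death chain on $\dZ$; any harmonic function satisfies $h(k+1)-h(k)=\sigma_k(h(k)-h(k-1))$, and normalising $h(0)=0$, $h(1)=1$ yields $h(k)=\sum_{j=0}^{k-1}R_j$ for $k\ge 0$ and $h(-k)=-\sum_{j=1}^{k}R_{-j}$ for $k\ge 1$, where $R_0=1$, $R_j=\prod_{i=1}^{j}\sigma_i$ for $j\ge 1$, and $R_{-j}=\prod_{i=0}^{j-1}\sigma_{-i}^{-1}$ for $j\ge 1$.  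Stopping the bounded martingale $h(X_n)$ at the exit from $[-M,N]$ gives the standard gambler's-ruin formula $P^e_i(T_N<T_{-M})=\{h(i)-h(-M)\}/\{h(N)-h(-M)\}$.

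Next, I will apply Birkhoff's theorem (in extended real-valued form, to cover $u=\pm\infty$) to the $T$-stationary ergodic sequence $\log\sigma_j$: almost surely, $n^{-1}\sum_{j=1}^{n}\log\sigma_j\to u$, and the $T^{-1}$-theorem similarly gives $n^{-1}\sum_{j=0}^{n-1}\log\sigma_{-j}\to u$.  When $u>0$, $\log R_j$ drifts to $+\infty$ linearly so $\sum_{j\ge 0}R_j=\infty$, while $\log R_{-j}$ drifts to $-\infty$ linearly so $\sum_{j\ge 1}R_{-j}<\infty$.  Sending $N\to\infty$ in the hitting formula gives $P^e_i(T_{-M}<\infty)=1$ for every $M$, while sending $M\to\infty$ first (keeping $N$ fixed) gives $P^e_i(T_N<\infty)=\{h(i)+L\}/\{h(N)+L\}<1$ for all $N>i$, where $L=\lim_{M\to\infty}(-h(-M))<\infty$; iterating the Markov property then forces $\sup_n X_n<\infty$ a.s., so combined with hitting every $-M$, one obtains $X_n\to-\infty$ a.s.  The case $u<0$ is symmetric.

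The delicate case is $u=0$, where the exponential-growth heuristic collapses; here I would invoke the classical oscillation property of centered stationary ergodic sums, namely that almost surely $\limsup_n\sum_{j=1}^{n}\log\sigma_j=+\infty$ and $\liminf_n\sum_{j=1}^{n}\log\sigma_j=-\infty$, unless $\log\sigma_0\equiv 0$, which ergodicity of $T$ rules out in any nontrivial setting.  Consequently $R_j\ge 1$ for infinitely many $j$, so $\sum_{j\ge 0}R_j=\infty$, and by the analogous argument in the reverse direction $\sum_{j\ge 1}R_{-j}=\infty$.  Fixing $N$ and letting $M\to\infty$ in the hitting formula then yields $P^e_i(T_N<\infty)=1$, while fixing $M$ and letting $N\to\infty$ yields $P^e_i(T_{-M}<\infty)=1$; taking the intersection over all $M,N\in\dZ$ (countable intersection of probability-one events) gives the claimed recurrent oscillation.

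The principal obstacle is precisely the $u=0$ step: one cannot argue via exponential growth and must appeal to the nontrivial oscillation lemma for zero-mean stationary ergodic sums (in the spirit of Atkinson and Chacon--Ornstein), and must separately exclude the degenerate constant case using ergodicity rather than mere stationarity of $T$.  A secondary technicality is the case $u=\pm\infty$, where $\log\sigma_0$ may fail to be integrable, but the extended-real Birkhoff theorem still delivers the required linear drift and the preceding argument carries through with only cosmetic modifications.
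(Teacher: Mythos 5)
The paper does not prove Theorem \ref{thm:alili} at all: it is imported verbatim from \citet[Theorem 2.1]{Alili:1999}, so there is no internal proof to compare against. Your argument is, in substance, a correct reconstruction of Alili's own proof: the harmonic function $h$ built from the products $R_j$, the gambler's-ruin identity on $[-M,N]$, Birkhoff's theorem (in extended form when $u=\pm\infty$) for the linear drift, and a recurrence result for zero-mean ergodic cocycles in the critical case. These are exactly the series $S(e)$ and $F(e)$ that the paper itself recalls from Alili in Example \ref{ex:counter}. The transient cases are handled correctly, including the passage from ``$\sup_n X_n<\infty$ a.s.\ and $T_{-M}<\infty$ for all $M$'' to $X_n\to-\infty$ via the observation that a level visited infinitely often is a.s.\ exceeded.

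One assertion in your $u=0$ step is, however, false as stated: it is not true that a zero-mean stationary ergodic sequence satisfies $\limsup_n S_n=+\infty$ and $\liminf_n S_n=-\infty$ a.s.\ unless $\log\sigma_0\equiv 0$. If $\log\sigma_0=g-g\circ T$ is a coboundary with bounded transfer function $g$, the partial sums $S_n=g-g\circ T^n$ remain bounded although $\log\sigma_0\not\equiv 0$. The correct input is Atkinson's theorem, which you do name: for ergodic $T$ and integrable $f$ with $\int f=0$ (and $u=0$ does force $\log\sigma_0\in L^1$), one has $\liminf_n|S_n|=0$ a.s., hence $\limsup_n S_n\ge 0$ and $\liminf_n S_n\le 0$; applying this also to the reversed sequence gives the same in the negative direction. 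This weaker conclusion already yields $R_j\ge e^{-\varepsilon}$ infinitely often on each side, hence $\sum_{j\ge 0}R_j=\sum_{j\ge 1}R_{-j}=\infty$, which is all your hitting-probability computation actually uses. So the proof survives, but the oscillation lemma must be stated in this corrected form rather than with the dichotomy ``unbounded oscillation unless identically zero.''
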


In addition to the trivial case where $p^{(\alpha)}$ is constant,
this result also covers the i.i.d. case first treated by  \citet{Solomon:1975}, i.e.,  where for a given
$\alpha$, $p^{(\alpha)}_i$ are i.i.d. Here are other examples of
stationary ergodic sequences.

\begin{example}[Periodic case]\label{exe:periodic}
\citet{Pyke:2003} studied Parrondo's paradox when for each
$\alpha\in \{1,\ldots,m\}$ $p^{(\alpha)}$ is a deterministic sequence of period
$d_\alpha$, i.e., $p^{(\alpha)}_{k+d_\alpha}=p^{(\alpha)}_k$. These
are particular cases of  stationary ergodic sequences for which each
environment $p^{(\alpha)}(\cdot+j)$, $j\in \{1,\ldots,d_\alpha\}$
has equal probability $1/d_\alpha$, so
$$
u = E\left(\log{\sigma^{(\alpha)}_0}\right) =
\frac{1}{d_\alpha} \sum_{j=1}^{d_\alpha} \log{ \sigma^{(\alpha)}_j
}, \qquad \alpha\in \{1,\ldots, m\}.
$$
This example contains Games A and B as particular cases.
\end{example}

\begin{example}\label{ex:contfraction}
Set $E = (0,1)$ and define $T(e) =
\frac{1}{e}-\lfloor \frac{1}{e}\rfloor$, $e\in
(0,1)$. Then $T$ is a measure-preserving map for the law with
density $f(x) = \frac{1}{\log{2}}\frac{1}{(1+x)}$, $x\in (0,1)$. Set
$p_i(e) = T^i e$ and use the two-sided extension theorem,
e.g., \citet[Theorem 7.1.2]{Durrett:2010}, to obtain a stationary
sequence defined for any $i\in \mathbb{Z}$. It is easy to check that
 $E(\log{\sigma_i}) =
\frac{\log{2}}{2}>0$. Therefore, according to Theorem
\ref{thm:alili}, the associated random walk converges to $-\infty$
with probability one, for almost every environment. Note that $T$ is not invertible.
\end{example}

%
%
%
%
%
%
%
%
%
%
%
%
%
%
%

In the next section, we study the asymptotic behavior of the
process $X$. To reduce the notations,  the random environment is fixed, unless otherwise
specified.

\subsection{Asymptotic behavior}\label{ssec:limit}

For any $\ell \in \dZ$, set $\tau_\ell = \inf\{n\ge 1;
X_n=\ell\}$.  The proof of the following lemma in given in Appendix
\ref{pf:mainlemma}.

\begin{lemma}\label{lem:main}
Let $\ell \in \dZ$  be given. Then $P^e_{\alpha i}\{X_n \le \ell \; i.o. \}=0$
for every $(\alpha,i) \in \{1,\ldots,m\}\times \dZ $ if and only if
for every $(\alpha,i) \in \{1,\ldots,m\}\times \dZ$,
\begin{equation}\label{eq:condlem}
P^e_{\alpha i}(\tau_\ell\ <\infty)   =1 , \mbox{ if  } \; i <
\ell,\mbox{ and } P^e_{\alpha i}(\tau_\ell\ <\infty) <1 , \mbox{
if } \; i> \ell.
\end{equation}
\end{lemma}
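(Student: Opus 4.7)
The plan is to verify both implications separately, using that the nearest-neighbor random walk $X$ must pass through $\ell$ to move between $\{X_n > \ell\}$ and $\{X_n \le \ell\}$; this turns visits to $\{\le \ell\}$ into excursions anchored at $\ell$. For convenience, write $r(\gamma, i) := P^e_{\gamma, i}(\tau_\ell < \infty)$ and $\rho_\beta := P^e_{\beta, \ell}(\tau_\ell^+ < \infty)$, where $\tau_\ell^+$ is the first return time to $\ell$.

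For the sufficiency direction, assume \eqref{eq:condlem}. The part for $i>\ell$ gives $r(\gamma, \ell + 1) < 1$ for every $\gamma$; by finiteness of the regime set, $\max_\gamma r(\gamma, \ell + 1) \le 1 - \eta$ for some $\eta > 0$. A one-step decomposition of the return probability from $(\beta, \ell)$ yields
\[
\rho_\beta = 1 - \sum_\gamma Q_{\beta\gamma}\, p^{(\gamma)}_\ell \bigl(1 - r(\gamma, \ell + 1)\bigr),
\]
and since $\sum_\gamma Q_{\beta\gamma} = 1$ with each $p^{(\gamma)}_\ell \in (0, 1)$, $\rho_\beta < 1$ uniformly in $\beta$. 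Iterating the strong Markov property at successive visits to $\ell$ bounds the total number of visits above by a geometric random variable, so it is a.s. finite. Each excursion between consecutive visits to $\ell$ is also a.s. finite (downward excursions by the part of \eqref{eq:condlem} for $i<\ell$, upward ones by the definition of the return), so the total time in $\{\le \ell\}$ from $(\beta, \ell)$ is a.s. finite. A general starting point $(\alpha, i)$ reduces to this case via strong Markov at $\tau_\ell$, using the part for $i<\ell$ directly when $i<\ell$.

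For the necessity direction, assume $P^e_{\alpha i}\{X_n \le \ell \; i.o. \} = 0$ for every $(\alpha, i)$. The case $i<\ell$ is immediate: the walk starts at $i<\ell$ and is almost surely eventually in $\{>\ell\}$, so by the nearest-neighbor property it must pass through $\ell$, giving $\tau_\ell < \infty$ a.s. The strict inequality for $i>\ell$ requires more work. Arguing by contradiction, suppose $r(\alpha^*, i^*) = 1$ for some $i^* > \ell$. Applying strong Markov at the first hitting time of $\ell + 1$ (finite a.s. since the walk must pass through it to reach $\ell$) expresses $1$ as an average of $r$-values in $[0, 1]$; equality forces each term to saturate, so $r(\gamma_0, \ell + 1) = 1$ for some $\gamma_0$. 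The one-step identity
\[
r(\gamma, \ell + 1) = \sum_\delta Q_{\gamma\delta} \bigl[q^{(\delta)}_{\ell + 1} + p^{(\delta)}_{\ell + 1}\, r(\delta, \ell + 2)\bigr]
\]
then forces $r(\delta, \ell + 2) = 1$ for every $\delta \in Q(\gamma_0)$, and the analogous identity at level $\ell + 2$ propagates $r = 1$ back down to $\ell + 1$ on $Q^2(\gamma_0)$; iterating, $r(\cdot, \ell + 1) \equiv 1$ on the forward $Q$-closure $C$ of $\gamma_0$. Since $Q|_C$ is stochastic, the finite set $C$ contains a closed communicating class $C^* \subseteq C$ under $Q$. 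For any $\beta \in C^*$, closure of $C^*$ under $Q$ combined with $r \equiv 1$ on $C^*$ and the already-established part for $i<\ell$ yields
\[
\rho_\beta = \sum_\gamma Q_{\beta\gamma}\bigl[q^{(\gamma)}_\ell \cdot 1 + p^{(\gamma)}_\ell \cdot 1\bigr] = 1.
\]
Because $C^*$ is closed under $Q$, the regime process started in $C^*$ stays in $C^*$, so the chain of regimes at successive returns to $\ell$ is genuinely stochastic on $C^*$; the walk visits $\ell$ (and hence $\{\le \ell\}$) infinitely often almost surely from $(\beta, \ell)$, contradicting the standing assumption.

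The main obstacle is the strict inequality in the necessity direction. The delicate combinatorial step is upgrading the single equality $r(\alpha^*, i^*) = 1$ into $r \equiv 1$ on a $Q$-closed subset of regimes at level $\ell + 1$; the finiteness of $\{1,\ldots,m\}$ then supplies a closed communicating class $C^*$ within the forward closure on which the return probability to $\ell$ equals $1$, producing the contradiction. The propagation essentially uses that each $p^{(\gamma)}_i \in (0, 1)$ strictly, that each row of $Q$ sums to $1$, and that the regime set is finite.
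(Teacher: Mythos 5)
Your sufficiency direction and the $i<\ell$ half of necessity are sound and close in spirit to the paper's argument (the paper gets $P^e_{\alpha\ell}(\mathcal{A}_\ell)=0$ from the fixed-point inequality $\mathcal{P}_\ell\le f^{e,*}_{\ell\ell}\mathcal{P}_\ell$ with $f^{e,*}_{\ell\ell}<1$, which is the same uniform return-probability bound you derive, packaged more compactly than your geometric-visit-count plus excursion analysis). The problem is in the necessity direction for $i>\ell$, precisely at the step you flag as delicate. Your propagation does \emph{not} give $r(\cdot,\ell+1)\equiv 1$ on the forward $Q$-closure of $\gamma_0$. From $r(\gamma_0,\ell+1)=1$ you correctly force $P^e_{\delta,\ell+2}(\tau_\ell<\infty)=1$ for $\delta\in Q(\gamma_0)$ and then $r(\cdot,\ell+1)\equiv 1$ on $Q^2(\gamma_0)$; iterating yields $r\equiv 1$ only on $\bigcup_{k\ge 0}Q^{2k}(\gamma_0)$, the forward closure under $Q^2$. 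Each descent from level $\ell+2$ to level $\ell+1$ costs an odd number of regime transitions, so only even powers of $Q$ are ever reached at level $\ell+1$. In general this even closure is neither $Q$-closed nor equal to the forward $Q$-closure: for Game C' with $Q=\left(\begin{smallmatrix}0&1\\1&0\end{smallmatrix}\right)$ one has $Q^{2k}(\gamma_0)=\{\gamma_0\}$ while the forward $Q$-closure is $\{1,2\}$, and the two parities lie in the two distinct communication classes $C_0,C_1$ of $(G_n,X_n)$, which (as in Example \ref{ex:counter}) can have genuinely different hitting behavior; so $r(\gamma_0,\ell+1)=1$ does not imply $r(\gamma,\ell+1)=1$ for $\gamma\ne\gamma_0$. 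Consequently your set $C$ need not contain any $Q$-closed class $C^*$ on which $r\equiv 1$, and the computation $\rho_\beta=\sum_\gamma Q_{\beta\gamma}\bigl[q^{(\gamma)}_\ell+p^{(\gamma)}_\ell r(\gamma,\ell+1)\bigr]=1$ collapses, since it needs $r\equiv 1$ on $Q(\beta)$. Since the reducible regimes are exactly the new cases this paper is built to cover, this is not a cosmetic omission.

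The argument is repairable by keeping track of parity rather than passing to a $Q$-closed class. Let $E=\bigcup_{k\ge 0}Q^{2k}(\gamma_0)$ (where you have shown $r(\cdot,\ell+1)\equiv 1$) and $O=\bigcup_{k\ge 0}Q^{2k+1}(\gamma_0)$. Any regime $\beta$ in the support of $G_{\tau_\ell}$ under $P^e_{\gamma_0,\ell+1}$ lies in $O$ (an odd number of steps separates levels $\ell+1$ and $\ell$), and $Q(O)\subseteq E$; hence for every $\beta\in O$ your formula gives $\rho_\beta=1$, using $r\equiv 1$ on $E$ for the upward step and the $i<\ell$ part for the downward step. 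Moreover each return to $\ell$ takes an even number of steps, so the regime at successive returns stays in $O$, and induction gives $P^e_{\beta\ell}(\mathcal{N}_\ell=\infty)=1$ for $\beta\in O$, which contradicts $P^e_{\beta\ell}(\mathcal{A}_\ell)=0$. With that correction your necessity argument goes through; note that the paper itself disposes of this implication in a single unproved sentence, so your instinct that this is where the real work lies is right --- you just need the even/odd bookkeeping to make it correct.
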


The next result is proven in Appendix \ref{app:pf_propexit}.
\begin{proposition}\label{prop:exit} Let $\ell>0$ be given. Then for
any $\alpha\in \{1,\ldots,m\}$,
\begin{equation}\label{eq:sum}
P^e_{\alpha i}(\tau_\ell < \infty \cup  \tau_{-\ell} < \infty ) =1, \quad |i|<\ell.
\end{equation}

\end{proposition}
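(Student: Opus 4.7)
The plan is to reduce the claim to a standard finite-state absorption argument. Fix the environment $e$. The set $B:=\{(\beta,j):1\le \beta\le m,\;|j|<\ell\}$ is finite, and on the restricted state space $\{1,\ldots,m\}\times\{-\ell,\ldots,\ell\}$ we treat the $2m$ states with $|X|=\ell$ as absorbing. Then $\tau:=\tau_\ell\wedge\tau_{-\ell}$ is the absorption time, and what must be shown is $P^e_{\alpha i}(\tau<\infty)=1$ for every $(\alpha,i)\in B$.

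The key step is to produce, from each $(\beta,j)\in B$, a trajectory of positive quenched probability and length at most $2\ell-1$ that reaches the boundary. Because $Q$ is stochastic, every row has at least one strictly positive entry, so starting from any $\beta$ we can inductively pick regimes $\gamma_1,\gamma_2,\ldots$ with $Q_{\gamma_{s-1}\gamma_s}>0$ (with $\gamma_0=\beta$) for arbitrarily many steps. Along any such regime path, prescribe $j+\ell$ consecutive downward jumps for the walker. Since by the standing assumption each $q_k^{(\gamma)}(e)=1-p_k^{(\gamma)}(e)\in(0,1)$, the probability of this explicit trajectory,
\[
\prod_{s=1}^{j+\ell} Q_{\gamma_{s-1}\gamma_s}\,q_{j-s+1}^{(\gamma_s)}(e),
\]
is strictly positive, and on this event the walker reaches $-\ell$ at time $j+\ell\le 2\ell-1$, forcing $\tau\le 2\ell-1$. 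Hence $P^e_{\beta j}(\tau\le 2\ell-1)>0$ for every $(\beta,j)\in B$.

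Since $B$ is finite, the quantity $\delta(e):=\min_{(\beta,j)\in B}P^e_{\beta j}(\tau\le 2\ell-1)$ is a minimum of finitely many strictly positive numbers, hence $\delta(e)>0$. Applying the Markov property of $(G_n,X_n)$ at the times $2\ell-1,\;2(2\ell-1),\ldots$ yields
\[
P^e_{\alpha i}\bigl(\tau>k(2\ell-1)\bigr)\le (1-\delta(e))^k\xrightarrow[k\to\infty]{}0,
\]
which gives \eqref{eq:sum}.

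There is no substantive obstacle: the two-sided bound $p^{(\alpha)}_k(e)\in(0,1)$ imposed in Section~\ref{ssec:model} together with the stochasticity of $Q$ is exactly what guarantees the positive-probability escape path, and finiteness of $B$ upgrades the pointwise lower bounds to a uniform one. Reducibility of $Q$ (the delicate feature emphasized in Remark~\ref{rem:bolthausen}) plays no role here because only the existence of a forward regime path from each $\beta$ is needed, not irreducibility or communication among classes.
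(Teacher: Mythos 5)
Your proof is correct and is essentially the paper's argument in direct form: the paper expresses the same uniform-escape bound as the statement that the substochastic kernel $\tilde P$ of the walk killed on $\{|X|=\ell\}$ satisfies $\tilde P^{\ell}\mathbf{1}\le c\,\mathbf{1}$ with $c=c(e)<1$ (established by an induction on the distance to the boundary rather than by exhibiting an explicit downward path), and then concludes via $\tilde P^{n}\mathbf{1}\to 0$, whereas you iterate the Markov property directly to get the geometric bound. Both hinge on exactly the two facts you identify --- $p^{(\gamma)}_k(e)\in(0,1)$ and stochasticity of $Q$, with no irreducibility needed --- so no further comparison is warranted.
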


Conditioning on the first play of the game yields the following.

\begin{proposition}\label{prop:time}
 Let $\ell$ be given and set $\left(f_{i\ell}(e)\right)_\alpha = P^e_{\alpha i}(\tau_\ell <\infty)$, $(\alpha ,i) \in \{1,\ldots,m\}\times \dZ$.
Then for any $\alpha \in \{1,\ldots,m\}$,
\begin{eqnarray*}
\left(f_{i\ell}\right)_\alpha &=& \sum_{\beta=1}^m Q_{\alpha
\beta}\left\{p^{(\beta )}_{i}
\left(f_{i+1,\ell}\right)_\beta+q^{(\beta
)}_{i}\left(f_{i-1,\ell}\right)_\beta\right\}, \quad i\not\in\{
\ell-1,\ell+1\},\\
\left(f_{i\ell}\right)_\alpha &=& \sum_{\beta=1}^m Q_{\alpha
\beta}\left\{p^{(\beta )}_{i}
+q^{(\beta )}_{i}\left(f_{i-1,\ell}\right)_\beta\right\}, \quad i = \ell-1,\\
\left(f_{i\ell}\right)_\alpha &=& \sum_{\beta=1}^m Q_{\alpha
\beta}\left\{p^{(\beta )}_{i}
\left(f_{i+1,\ell}\right)_\beta+q^{(\beta )}_{i}\right\}, \quad i
= \ell+1.
\end{eqnarray*}

\end{proposition}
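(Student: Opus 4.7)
The plan is to prove this by a first-step analysis for the bivariate Markov chain $(G_n,X_n)$ on $\{1,\ldots,m\}\times\dZ$, under the fixed environment $e$.

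First I would fix $\ell\in\dZ$, $\alpha\in\{1,\ldots,m\}$, and $i\in\dZ$ with $i\ne\ell$ (the case $i=\ell$ gives $(f_{i\ell})_\alpha=1$ trivially since $\tau_\ell\ge 1$ and $X_0=\ell$ implies $X_1$ returns to $\ell$ with the correct probability structure, but the recursion isn't needed there). From the explicit transition kernel $P_{\alpha i,\beta j}(e)=Q_{\alpha\beta}P^{(\beta)}_{ij}(e)$ given in Section \ref{ssec:model}, the one-step law sends $(\alpha,i)$ to $(\beta,i+1)$ with probability $Q_{\alpha\beta}p_i^{(\beta)}$ and to $(\beta,i-1)$ with probability $Q_{\alpha\beta}q_i^{(\beta)}$. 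Decomposing on the value of $(G_1,X_1)$ and using the Markov property at time $1$ yields
\[
P^e_{\alpha i}(\tau_\ell<\infty)=\sum_{\beta=1}^m Q_{\alpha\beta}\Bigl\{p^{(\beta)}_i\,P^e_{\beta,i+1}(\sigma_\ell<\infty)+q^{(\beta)}_i\,P^e_{\beta,i-1}(\sigma_\ell<\infty)\Bigr\},
\]
where $\sigma_\ell=\inf\{n\ge 0;X_n=\ell\}$ is the hitting (rather than first passage) time applied to the shifted chain.

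The second step is to convert $\sigma_\ell$ back into $\tau_\ell$. If the post-jump state $(\beta,j)$ satisfies $j\ne\ell$, then starting afresh at $(\beta,j)$ gives $P^e_{\beta j}(\sigma_\ell<\infty)=P^e_{\beta j}(\tau_\ell<\infty)=(f_{j\ell})_\beta$, because the chain must still leave $j\ne\ell$ before hitting $\ell$. If instead $j=\ell$, then $\sigma_\ell=0$ deterministically, so that term contributes $1$ rather than a value of $f$. This produces the three cases in the statement: when $i\notin\{\ell-1,\ell+1\}$, neither $i+1$ nor $i-1$ equals $\ell$, both summands use $f$; when $i=\ell-1$, the $p_i^{(\beta)}$ term lands directly on $\ell$ and contributes $1$, while the $q_i^{(\beta)}$ term uses $(f_{i-1,\ell})_\beta$; symmetrically when $i=\ell+1$.

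The main subtlety—really the only one—is the distinction between $\tau_\ell$ (which is $\ge 1$ by definition) and the post-conditioning hitting time, which permits the value $0$ when the chain happens to land on $\ell$ on the very first step. Getting the three cases right amounts to tracking this carefully at the boundary points $i=\ell\pm 1$. No measurability issue arises because everything is carried out under the quenched law $P^e$ for fixed environment $e$, and the strong Markov property for the homogeneous chain $(G_n,X_n)$ on the countable state space $\{1,\ldots,m\}\times\dZ$ is standard.
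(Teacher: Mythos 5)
Your proof is correct in its essentials and is exactly the paper's argument: the paper offers no more than the phrase ``conditioning on the first play of the game,'' and your first-step decomposition of $(G_n,X_n)$, together with the careful bookkeeping of $\tau_\ell=\inf\{n\ge1:X_n=\ell\}$ versus the post-jump hitting time $\inf\{n\ge0:X_n=\ell\}$, is precisely what that phrase means; the three cases fall out of whether $i\pm1=\ell$, as you say.

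One small correction: your parenthetical about $i=\ell$ is wrong. $(f_{\ell\ell})_\alpha=P^e_{\alpha\ell}(\tau_\ell<\infty)$ is the \emph{return} probability, which is not trivially $1$ (indeed the whole point of Lemma \ref{lem:main} and equation \eqref{eq:fll} is that it can be $<1$). Moreover the first displayed identity of the proposition is asserted for all $i\notin\{\ell-1,\ell+1\}$, which \emph{includes} $i=\ell$, so you should not set that case aside. Fortunately your own argument already covers it without modification: since $\tau_\ell\ge1$ forces at least one step, the decomposition on $(G_1,X_1)$ applies at $i=\ell$ just as at any other $i$, and the post-jump sites $\ell\pm1$ are never $\ell$, so both summands use $f$ and the first formula holds there too. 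With that parenthetical deleted and the case $i=\ell$ folded into the generic case, the proof is complete.
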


Set $\|f_{i\ell}(e)\|=\max_{\alpha \in \{1,\ldots,m\} }\left(f_{i\ell}(e)\right)_\alpha$. The following proposition is an interesting consequence of the
previous result. Its proof is given in Appendix \ref{pf:proptime3}.

\begin{proposition}\label{prop:time3}
 Let $\ell$ and $e$ be given.
 If $\|f_{i\ell}(e)\| =
 1$ for some $i<\ell$ then $\|f_{i\ell}(e)\|=1$ for any $i<\ell$. Similarly,  if $\|f_{i\ell}(e)\|=  1$ for some $i>\ell$ then
$\|f_{i\ell}(e)\|=1$ for any $i>\ell$.

Next, suppose further that the Markov chain $G_n$ is irreducible. If $P^e_{\alpha i}(\tau_\ell<\infty) = 1, \; e \mbox{ a.s.}$ for some
$\alpha\in\{1,\ldots,m\}$, and some $i<\ell$, then $P^e_{\alpha
i}(\tau_\ell<\infty) = 1, \; e \mbox{ a.s.}$ for every $\alpha\in
 \{1,\ldots,m\}$ and  any $i<\ell$. Moreover, if
$P^e_{\alpha i}(\tau_\ell<\infty) = 1, \; e \mbox{ a.s.}$ for some
$\alpha\in\{1,\ldots,m\}$, and some $i>\ell$, then $P^e_{\alpha
i}(\tau_\ell<\infty) = 1, \; e \mbox{ a.s.}$ for every $\alpha\in
 \{1,\ldots,m\}$ and  any $i>\ell$.
\end{proposition}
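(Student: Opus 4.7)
For the first claim I would apply a strong maximum principle to the recursion in Proposition~\ref{prop:time}. Fix $e$ and $\ell$, and assume $(f_{i_0\ell}(e))_{\alpha_0} = 1$ for some $i_0 < \ell$ and $\alpha_0 \in \{1,\ldots,m\}$. For $i_0 < \ell - 1$ the recursion writes $(f_{i_0\ell})_{\alpha_0}$ as a convex combination in $[0,1]$ of the values $(f_{i_0+1,\ell})_\beta$ and $(f_{i_0-1,\ell})_\beta$ with weights $Q_{\alpha_0\beta}p_{i_0}^{(\beta)}$ and $Q_{\alpha_0\beta}q_{i_0}^{(\beta)}$; since $p_{i_0}^{(\beta)}, q_{i_0}^{(\beta)} \in (0,1)$, the positive weights are exactly those indexed by $\beta \in N(\alpha_0) := \{\beta : Q_{\alpha_0\beta} > 0\}$. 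For such a convex combination to equal $1$, every positively weighted term must itself equal $1$, so $(f_{i_0 \pm 1,\ell})_\beta = 1$ for every $\beta \in N(\alpha_0)$, and in particular $\|f_{i_0 \pm 1,\ell}(e)\| = 1$. The boundary case $i_0 = \ell - 1$ is handled by the corresponding form of the recursion, delivering the conclusion downward. Iterating in both directions propagates $\|f_{j\ell}(e)\| = 1$ to every $j < \ell$; the case $i > \ell$ is symmetric.

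For the second claim, pick $(\alpha^*, i^*)$ with $i^* < \ell$ and $(f_{i^*\ell}(e))_{\alpha^*} = 1$ for $P$-a.e.\ $e$. Applying the first claim pointwise on that full-measure set yields $\|f_{j\ell}(e)\| = 1$ for every $j < \ell$, a.s. To promote this to $(f_{j\ell}(e))_\beta = 1$ for every $\beta$ and every $j$, I would iterate the same convex-combination argument while tracking the specific index at which the value $1$ is attained: whenever $(f_{i\ell}(e))_\alpha = 1$, one also has $(f_{i \pm 1,\ell}(e))_\beta = 1$ for every $\beta \in N(\alpha)$. Starting from $(\alpha^*, i^*)$ and iterating, the accumulated set of ``good'' pairs contains every $(\beta, j)$ reachable in the directed product graph on $\{1,\ldots,m\} \times \{i : i < \ell\}$ whose edges are $(\alpha, i) \to (\beta, i \pm 1)$ for $\beta \in N(\alpha)$, with the upward edge forbidden at $i = \ell - 1$. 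Irreducibility of $Q$ implies every $\beta$ is $Q$-reachable from $\alpha^*$ by paths of arbitrarily long length, and by freely choosing the signs of the accompanying vertical steps one covers every pair $(\beta, j)$ of the correct parity in the $X$-coordinate.

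The main delicacy is parity bookkeeping when $Q$ is periodic, since direct propagation then reaches only one parity class of pairs at a time. The remaining parity should be obtained by combining the a.s.\ hypothesis with the shift identity $(f_{i\ell}(Te))_\alpha = (f_{i+1,\ell+1}(e))_\alpha$ and the ergodicity of $T$ on $(E,\cE,P)$, which let one import the conclusion from an appropriately shifted version of the problem. This is the single place in the argument where the a.s.\ strength of the hypothesis is used rather than just its pointwise form; the $i > \ell$ half of the statement follows from the symmetric argument applied to the reversed recursion.
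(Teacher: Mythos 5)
Your first claim is proved exactly as in the paper: the recursion of Proposition \ref{prop:time} expresses $(f_{i\ell})_{\alpha}$ as a convex combination (total weight $\sum_\beta Q_{\alpha\beta}(p_i^{(\beta)}+q_i^{(\beta)})=1$) of quantities in $[0,1]$, so the value $1$ forces every positively weighted neighbour to equal $1$, and this propagates in both directions. That half is complete and correct.

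For the second claim you have the right strategy (the paper's as well: product-graph propagation via irreducibility of $Q$, plus the shift $T$ and the a.s.\ hypothesis), and you have correctly isolated the parity obstruction that appears when $(G_n,X_n)$ is reducible, e.g.\ for Game C$'$. But the step you describe as ``importing the conclusion from an appropriately shifted version of the problem'' does not close the gap as stated: the identity $(f_{i\ell}(Te))_\alpha=(f_{i+1,\ell+1}(e))_\alpha$ shifts the starting level \emph{and the target level} together, so the shifted problem has target $\ell+n$ and the pairs it reaches lie in the same parity class relative to \emph{its} target; what you obtain directly are statements about $\tau_{\ell+n}$, not about $\tau_\ell$. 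The ingredient that actually breaks the parity --- and the one the paper uses --- is the nearest-neighbour passage observation: for $j<\ell$ one has $\{\tau_{\ell+n}<\infty\}\subseteq\{\tau_\ell<\infty\}$ under $P^e_{\beta j}$, since the walk must visit $\ell$ on its way from $j$ to $\ell+n$. Concretely: from $P^e_{\alpha^* i^*}(\tau_\ell<\infty)=1$ a.s.\ and measure preservation one gets $P^e_{\alpha^*,\,i^*+n}(\tau_{\ell+n}<\infty)=1$ a.s.; propagating $n$ steps downward (using $Q^n_{\alpha^*\beta}>0$) gives $P^e_{\beta i^*}(\tau_{\ell+n}<\infty)=1$ a.s.; and only the passage observation converts this into $P^e_{\beta i^*}(\tau_\ell<\infty)=1$ a.s.\ for every $\beta$, after which your propagation argument finishes the job for all $j<\ell$. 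With that step inserted your proof is complete. Two minor points: only stationarity (measure preservation) of $T$ is needed, not ergodicity; and the parity problem occurs only when $Q$ has even period --- for aperiodic or odd-period $Q$ your direct reachability argument already covers all pairs.
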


The proof of the following subadditive ergodic theorem for the sequence
$\|f_{\ell+k,\ell+k-1}\|$, $k\ge 1$  is given in Appendix \ref{pf:proptime4}.

\begin{proposition}\label{prop:time4}
For any $n\ge 1$ and any $\ell\in \mathbb{Z}$,
\begin{equation}\label{eq:prodergo}
\|f_{\ell-n,\ell}\| \le \prod_{k=1}^n
\|f_{\ell-k,\ell-k+1}\|.
\end{equation}
Moreover $\|f_{\ell-k,\ell-k+1}\|$, $k\ge 1$, is a stationary
ergodic sequence and
$$
\gamma_-=\lim_{n\to\infty}
\frac{1}{n}\log{\|f_{\ell-n,\ell}(e)\|} \le
E\left\{\log{\|f_{0,1}\|}\right\}\le 0, \; e \mbox{ a.s.}
$$
Similarly,
\begin{equation}\label{eq:prodergo2} \|f_{\ell+n,\ell}\|
\le \prod_{k=1}^n \|f_{\ell+k,\ell+k-1}\|.
\end{equation}
Moreover $\|f_{\ell+k,\ell+k-1}\|$, $k\ge 1$, is a stationary
ergodic sequence and
$$
\gamma_+ = \lim_{n\to\infty}
\frac{1}{n}\log{\|f_{\ell+n,\ell}(e)\|} \le
E\left\{\log{\|f_{1,0}\|}\right\}\le 0, \; e \mbox{ a.s.}
$$
\end{proposition}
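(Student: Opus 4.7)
The plan has three parts: (i) derive the product inequality by conditioning on the first passage to an intermediate level via the strong Markov property; (ii) establish that $(\|f_{\ell-k,\ell-k+1}\|)_{k\ge 1}$ is stationary ergodic, using the shift relation $p_k^{(\alpha)}(Te)=p_{k+1}^{(\alpha)}(e)$; (iii) apply Birkhoff's and Kingman's ergodic theorems to obtain the existence of $\gamma_-$ together with the bound $\gamma_-\le E[\log\|f_{0,1}\|]\le 0$.

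For (i), since the walk has nearest-neighbor increments, any path from $\ell-n$ that reaches $\ell$ must first hit $\ell-n+1$ at a finite time $\tau_{\ell-n+1}$. Decomposing according to this hitting time and the regime $G_{\tau_{\ell-n+1}}=\beta$ via the strong Markov property gives
\begin{equation*}
(f_{\ell-n,\ell})_\alpha = \sum_{\beta} P^e_{\alpha,\ell-n}\!\left(\tau_{\ell-n+1}<\infty,\,G_{\tau_{\ell-n+1}}=\beta\right)(f_{\ell-n+1,\ell})_\beta \le \|f_{\ell-n+1,\ell}\|\cdot(f_{\ell-n,\ell-n+1})_\alpha,
\end{equation*}
so $\|f_{\ell-n,\ell}\|\le \|f_{\ell-n,\ell-n+1}\|\cdot\|f_{\ell-n+1,\ell}\|$, and induction gives \eqref{eq:prodergo}. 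Running the same decomposition through an arbitrary intermediate level $\ell-k$ with $m<k<n$ yields the stronger subadditive bound $\|f_{\ell-n,\ell-m}\|\le \|f_{\ell-n,\ell-k}\|\cdot\|f_{\ell-k,\ell-m}\|$, which will feed Kingman's theorem.

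For (ii), the shift hypothesis implies that the quenched walk under environment $Te$ started at $i$ equals in law the walk under $e$ started at $i+1$ shifted down by one, hence $f_{i,\ell}(Te)=f_{i+1,\ell+1}(e)$. Iterating (and passing to the two-sided natural extension if needed so that $T$ is invertible, which preserves ergodicity) gives $\|f_{\ell-k,\ell-k+1}\|=\|f_{0,1}\|\circ T^{\ell-k}$, so the sequence in question is the $T^{-1}$-orbit of $\|f_{0,1}\|$, hence stationary ergodic. For (iii), note $\log\|f_{0,1}\|\le 0$, so $\log^+\|f_{0,1}\|=0$ is trivially integrable. Birkhoff's theorem applied to the sequence from (ii), combined with the log of \eqref{eq:prodergo}, gives
\begin{equation*}
\limsup_{n\to\infty}\frac{1}{n}\log\|f_{\ell-n,\ell}(e)\|\le \lim_{n\to\infty}\frac{1}{n}\sum_{k=1}^n\log\|f_{\ell-k,\ell-k+1}(e)\|= E[\log\|f_{0,1}\|]\le 0\qquad\text{a.s.}
\end{equation*}
To upgrade $\limsup$ to $\lim$, apply Kingman's subadditive ergodic theorem to $X_{m,n}=\log\|f_{\ell-n,\ell-m}\|\le 0$ for $0\le m<n$: it is subadditive by the inequality at the end of (i), stationary in the required sense because $X_{m+1,n+1}=X_{m,n}\circ T^{-1}$ by (ii), and $E[X_{0,1}^+]=0<\infty$. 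Kingman's theorem then produces the almost sure limit $\gamma_-=\lim_n X_{0,n}/n=\inf_n E[X_{0,n}]/n$, and subadditivity of $E[X_{0,n}]$ forces $\gamma_-\le E[X_{0,1}]=E[\log\|f_{0,1}\|]$.

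The argument for $\gamma_+$ is entirely symmetric: conditioning on the first passage to $\ell+n-1$ from $\ell+n$ gives \eqref{eq:prodergo2}, and $(\|f_{\ell+k,\ell+k-1}\|)_{k\ge 1}=(\|f_{1,0}\|\circ T^{\ell+k-1})_{k\ge 1}$ is stationary ergodic, so the same Birkhoff/Kingman combination yields $\gamma_+\le E[\log\|f_{1,0}\|]\le 0$. The only technical subtlety is the possible non-invertibility of $T$ (as with the Gauss map in Example \ref{ex:contfraction}), which is handled cleanly by working on the natural extension so that a genuine two-sided stationary structure is available; this is the main obstacle to a quick proof, but it is standard.
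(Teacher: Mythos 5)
Your proposal is correct and follows essentially the same route as the paper: decompose the hitting probability at an intermediate level via the strong Markov property (forced by the nearest-neighbor structure) to get $\|f_{i\ell}\|\le\|f_{ij}\|\,\|f_{j\ell}\|$, identify $\|f_{ij}(e)\|=\|f_{0,j-i}(T^ie)\|$ for stationarity and ergodicity, and invoke the subadditive ergodic theorem. Your extra care about non-invertible $T$ (passing to the natural extension) and the explicit Birkhoff step are harmless refinements of the same argument, not a different approach.
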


We now state  a useful technical result, whose proof is given in Appendix \ref{app:pf-lemma2}.

\begin{lemma}\label{lem:main0} The following statements hold:
\begin{itemize}
\item[(i)]
$P^e_{\alpha i}\left(\disp\lim_{n\to\infty} X_n = +\infty
\right) =1$ for every $(\alpha,i)\in \{1,\ldots,m\}\times \dZ$ if and
only if for every $(\alpha,i,\ell)\in \{1,\ldots,m\}\times \dZ^2$,
\begin{eqnarray}\label{eq:condthm+1} P^e_{\alpha i}(\tau_\ell <\infty)
& = & 1 , \quad i <
\ell,\\
 P^e_{\alpha i}(\tau_\ell <\infty)&<& 1, \quad  i> \ell.\label{eq:condthm+2}
\end{eqnarray}

\item[(ii)] $P^e_{\alpha i}\left(\disp\lim_{n\to\infty} X_n = -\infty  \right) =1$ for every
$(\alpha,i)\in \{1,\ldots,m\}\times \dZ$ if and only if for every
$(\alpha,i,\ell)\in \{1,\ldots,m\}\times \dZ^2$,
\begin{eqnarray}\label{eq:condthm-1} P^e_{\alpha i}(\tau_\ell <\infty)
& = & 1 , \quad  i >
\ell,\\
 P^e_{\alpha i}(\tau_\ell <\infty) &<& 1, \quad i < \ell.  \label{eq:condthem-2}
\end{eqnarray}

\item[(iii)] Suppose further that the Markov chain $G_n$ is irreducible.
\item[] $P^e_{\alpha i}\left(\disp\liminf_{n\to\infty} X_n = -\infty, \limsup_{n\to\infty} X_n = +\infty \right) =1$ for every
$(\alpha,i)\in \{1,\ldots,m\}\times \dZ$, if and only if for every
$(\alpha,\ell)\in \{1,\ldots,m\}\times \dZ$,
\begin{eqnarray}\label{eq:condthm+-} P^e_{\alpha \ell}(\tau_\ell <\infty)
& = & 1.
\end{eqnarray}
\end{itemize}
\end{lemma}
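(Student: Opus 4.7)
The overall approach is to reduce all three parts to hitting-time statements already prepared in the preceding results, and to exploit the $\pm 1$-step structure of $X_n$ to convert ``event at every level'' statements into almost-sure pathwise convergence or recurrence. Parts (i) and (ii) should be essentially immediate corollaries of Lemma \ref{lem:main}, while part (iii) is where irreducibility of $G_n$ is actually used.

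For part (i), I would start from the identity $\{X_n\to+\infty\} = \bigcap_{\ell\in\dZ}\{X_n\le \ell \text{ i.o.}\}^c$, which holds pathwise because $X_n$ moves by $\pm 1$. Since the intersection is countable, $P^e_{\alpha i}(X_n\to+\infty)=1$ for every $(\alpha,i)$ if and only if $P^e_{\alpha i}(X_n\le\ell\text{ i.o.})=0$ for every $(\alpha,i,\ell)$. Lemma \ref{lem:main} rewrites the latter, one level at a time, as conditions \eqref{eq:condthm+1}--\eqref{eq:condthm+2}. Part (ii) follows from the reflected version of the same argument, with $\{X_n\ge\ell \text{ i.o.}\}$ replacing $\{X_n\le\ell\text{ i.o.}\}$ and the obvious symmetric analogue of Lemma \ref{lem:main} (obtained by the same proof as in Appendix \ref{pf:mainlemma}).

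For part (iii), the ``only if'' direction is essentially free: from $(\alpha,\ell)$, the almost-sure equalities $\liminf X_n=-\infty$ and $\limsup X_n=+\infty$ force the walk to return to $\ell$ at a finite time, so $P^e_{\alpha\ell}(\tau_\ell<\infty)=1$. For the ``if'' direction, my plan is first to unwind the hypothesis using the first-step decomposition from Proposition \ref{prop:time}, namely
\[
1 = P^e_{\alpha\ell}(\tau_\ell<\infty) = \sum_{\beta=1}^m Q_{\alpha\beta}\bigl\{p^{(\beta)}_\ell (f_{\ell+1,\ell})_\beta + q^{(\beta)}_\ell (f_{\ell-1,\ell})_\beta\bigr\}.
\]
Since every $(f_{\cdot,\ell})_\beta\in[0,1]$ and the weights $Q_{\alpha\beta}p^{(\beta)}_\ell$ and $Q_{\alpha\beta}q^{(\beta)}_\ell$ are strictly positive (as $p^{(\beta)}_\ell\in(0,1)$) and sum to one over those $\beta$ with $Q_{\alpha\beta}>0$, equality at $1$ forces $(f_{\ell+1,\ell})_\beta=(f_{\ell-1,\ell})_\beta=1$ for every $\beta$ with $Q_{\alpha\beta}>0$. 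Varying $\alpha$ and invoking irreducibility of $Q$ (every $\beta$ is hit by some $\alpha$) then gives $(f_{\ell\pm1,\ell})_\beta=1$ for every $\beta$. The second half of Proposition \ref{prop:time3} propagates this to $P^e_{\alpha i}(\tau_\ell<\infty)=1$ for every $(\alpha,i)$ with $i\neq\ell$; combined with the hypothesis, one has $\tau_\ell<\infty$ almost surely for every $(\alpha,i,\ell)$. A strong Markov iteration at the successive return times to $\ell$ then yields $P^e_{\alpha i}(X_n=\ell\text{ i.o.})=1$ for every fixed $\ell$, and a countable intersection over $\ell$ gives the required simultaneous $\liminf=-\infty$ and $\limsup=+\infty$.

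The main obstacle I foresee is the transfer, in part (iii), from the one-state assumption $P^e_{\alpha\ell}(\tau_\ell<\infty)=1$ to a statement that is uniform in the second Markovian coordinate $\alpha$ and in the starting level $i$. This is exactly the step where irreducibility of $Q$ and the second (irreducibility-dependent) half of Proposition \ref{prop:time3} together do all of the work; without either, the reducible phenomena discussed in Remark \ref{rem:bolthausen} (and Game C$'$) show that the conclusion genuinely fails, so no slicker route bypassing these two ingredients is available.
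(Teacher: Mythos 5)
Your proposal is correct and follows essentially the same route as the paper: parts (i)--(ii) reduce to Lemma \ref{lem:main} via the countable intersection over levels (the paper handles (ii) by applying (i) to $-X_n$, which is your reflected lemma), and the ``if'' direction of (iii) uses the same first-step forcing at level $\ell$, propagation via Proposition \ref{prop:time3} under irreducibility of $Q$, and iteration over successive return times to get $\cN_\ell=\infty$ almost surely for every $\ell$. Your ``only if'' argument in (iii) is a touch more direct than the paper's (which passes through $P^e_{\alpha i}(\cA_\ell)=1$ and equation \eqref{eq:fll}), but it is the same idea.
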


We  are now in a position to state the first main result,
proven in Appendix \ref{pf:mainthm}.

\begin{theorem}\label{thm:main}
Set $ \gamma_\pm =\lim_{n\to\infty} \frac{1}{n}\log\|f_{\pm
n,0}\|$. Then under no additionnal condition, there holds
$\max(\gamma_+ ,\gamma_-)=0$. If in addition $(G_n,X_n)$ is irreducible, then one  of
the following three mutually exclusive cases occur:
\begin{enumerate}
\item If
$\gamma_+ < 0$ and $\gamma_-= 0$, then for every $(\alpha,i)\in \{1,\ldots,m\}\times \dZ$,
$$
P^e_{\alpha i}\left(\disp\lim_{n\to\infty} X_n = +\infty
\right) =1, \; e  \mbox{ a.s.}
$$

\item If
$\gamma_+ = 0$ and $\gamma_- <0$, then for every $(\alpha,i)\in \{1,\ldots,m\}\times \dZ$,
$$
P^e_{\alpha i}\left(\disp\lim_{n\to\infty} X_n = -\infty
\right) =1,  \; e \mbox{ a.s.}
$$

\item If
$\gamma_+ = 0 =\gamma_-$, then for every $(\alpha,i)\in \{1,\ldots,m\}\times \dZ$,
$$
P^e_{\alpha i}\left(\disp\liminf_{n\to\infty} X_n = -\infty,
\limsup_{n\to\infty} X_n = +\infty  \right) =1,  \; e  \mbox{ a.s.}
$$
\end{enumerate}
\end{theorem}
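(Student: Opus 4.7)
The plan is to reduce the theorem to Lemma \ref{lem:main0} by translating the conditions on $\gamma_\pm$ into componentwise statements about the hitting probabilities $(f_{i,\ell})_\alpha$, via the central bridging equivalence
\[
\gamma_+ = 0 \iff \|f_{1,0}(e)\| = 1 \text{ for $P$-a.e. } e,
\]
together with its mirror image for $\gamma_-$; the three cases of the theorem will then correspond directly to parts (i), (ii), and (iii) of Lemma \ref{lem:main0}.

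To prove $\max(\gamma_+,\gamma_-)=0$, I would begin with the shift identity $(f_{i,\ell})_\alpha(T^k e) = (f_{i+k,\ell+k})_\alpha(e)$, a direct consequence of $p_k^{(\alpha)}(Te) = p_{k+1}^{(\alpha)}(e)$, which yields $\|f_{n,0}(e)\| = \|f_{0,-n}(T^n e)\|$ and $\|f_{-n,0}(e)\| = \|f_{0,n}(T^{-n}e)\|$. A subadditive ergodic argument modelled on Proposition \ref{prop:time4} then shows that $\frac{1}{n}\log\|f_{0,\pm n}(e)\|$ has an almost sure constant limit, and equality in distribution through the shift identity forces this constant to coincide with $\gamma_\mp$. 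Taking the maximum over $\alpha$ in Proposition \ref{prop:exit} at $i=0$ produces the bound $\|f_{0,n}(e)\| + \|f_{0,-n}(e)\| \ge 1$, which precludes both $\gamma_\pm$ from being strictly negative; combined with $\gamma_\pm \le 0$ from Proposition \ref{prop:time4}, this delivers $\max(\gamma_+,\gamma_-)=0$.

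To establish the key equivalence, I would argue as follows. If $\gamma_+ = 0$, then the sandwich $0 = \gamma_+ \le E(\log\|f_{1,0}\|) \le 0$ from Proposition \ref{prop:time4} forces $E(\log\|f_{1,0}\|) = 0$; since $\log\|f_{1,0}\| \le 0$ pointwise, this gives $\|f_{1,0}(e)\|=1$ almost surely. Shifting by $T^\ell$ and using the first part of Proposition \ref{prop:time3} then yields $\|f_{i,\ell}(e)\|=1$ for every $i>\ell$ and every $\ell \in \dZ$, almost surely. Under the irreducibility hypothesis on $(G_n,X_n)$, a pigeonhole argument in $\ell$ together with Birkhoff's theorem produces a specific $\alpha_0$ for which $(f_{\ell+1,\ell})_{\alpha_0}(e)=1$ almost surely for some $\ell$, and the irreducibility clause of Proposition \ref{prop:time3} then upgrades this to $(f_{i,\ell})_\alpha(e)=1$ for every $\alpha$ and every $i>\ell$. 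Conversely, $\gamma_+ < 0$ forces $\|f_{n,0}(e)\| \to 0$ exponentially, and Proposition \ref{prop:time3} propagates $\|f_{n,0}(e)\|<1$ to $(f_{i,\ell})_\alpha(e)<1$ for every $\alpha$ and every $i>\ell$. Mirror statements apply to $\gamma_-$ on the $i<\ell$ side.

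Finally, I would dispatch the three cases by direct application of Lemma \ref{lem:main0}. Cases 1 and 2 verify the hypotheses of parts (i) and (ii), respectively. In Case 3, both equivalences give $(f_{\ell\pm 1,\ell})_\beta(e)=1$ for every $\beta$ and $\ell$, so the first-step decomposition of Proposition \ref{prop:time} yields
\[
P^e_{\alpha\ell}(\tau_\ell<\infty) = \sum_\beta Q_{\alpha\beta}\bigl[p^{(\beta)}_\ell(f_{\ell+1,\ell})_\beta+q^{(\beta)}_\ell(f_{\ell-1,\ell})_\beta\bigr] = \sum_\beta Q_{\alpha\beta} = 1,
\]
so that part (iii) of Lemma \ref{lem:main0} applies. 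The hardest part, I expect, is the upgrade step in the third paragraph, namely passing from the max-equality $\|f_{1,0}(e)\|=1$ to the componentwise equalities $(f_{1,0})_\alpha(e)=1$; this is where the irreducibility of $(G_n,X_n)$ must do its real work, through a careful interplay of Birkhoff's ergodic theorem on the environment and the irreducibility clause of Proposition \ref{prop:time3}.
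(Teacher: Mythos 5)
Your proposal is correct and follows essentially the same route as the paper: $\max(\gamma_+,\gamma_-)=0$ from Proposition \ref{prop:exit} combined with the subadditive bounds of Proposition \ref{prop:time4}, the sandwich $0=\gamma_\pm\le E\{\log\|f_{\cdot}\|\}\le 0$ to get $\|f_{1,0}\|=1$ (resp.\ $\|f_{0,1}\|=1$) a.s., the upgrade to componentwise equalities via irreducibility and Proposition \ref{prop:time3}, and the final reduction to Lemma \ref{lem:main0}. The only cosmetic difference is in the upgrade step, where you invoke a pigeonhole/Birkhoff argument while the paper asserts the pointwise-in-$e$ propagation directly from irreducibility of $(G_n,X_n)$; both rest on the same two propositions and the same use of stationarity.
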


This result shows that, under the hypothesis of irreducibility of the chain $(G_n,X_n)$ - the assumption
common to all previous instances in the literature quoted thus far in the present paper - the behavior of regime switching random walks in random environments mimics the (recurrence, transience-to-the-left, transience-to-the-right) trichotomy exhibited by random walks not subjected to regime switching.

In the next section we remove the hypothesis of irreducibility of the chain $(G_n,X_n)$ and obtain a new, mutually exclusive breakdown of the asymptotic behavior, especially of interest in the more difficult third case of Theorem \ref{thm:main}.

\section{Other criteria for transience and recurrence}\label{sec:hitting}

First, we express the relationship between hitting probabilities.
These will be needed for computation purposes. Then it will be shown that  $\gamma_+$ and $\gamma_-$ are related to dimensions of some random spaces through the famous Oseledec's Theorem stated in Appendix \ref{app:oseledec}.

\subsection{Recursive formulas for hitting
probabilities}\label{ssec:recurs}

For any given $\ell\in\dZ$ recall that $\tau_\ell = \inf\{n\ge 1;
X_n=\ell\}$ and set $\tilde \tau_\ell = \inf\{n\ge 0;
X_n=\ell\}$. For any choice of $\alpha,\beta\in\{1,\ldots,m\}$ set
$\left(\tilde f^{(\beta)}_{i\ell}\right)_\alpha (e)
=P^e_{\alpha i}\left(\tilde \tau_\ell<\infty,G_{\tilde
\tau_\ell}=\beta\right)$ and similarly $\left( f^{(\beta)}_{i\ell}\right)_\alpha (e)
=P^e_{\alpha
i}\left(\tau_\ell<\infty,G_{\tau_\ell}=\beta \right)$.
It is easy to check that $\left(\tilde f^{(\beta)}_{\ell\ell}\right)_\alpha  =
\delta_{\alpha\beta}$, for any $\alpha,\beta\in\{1,\ldots,m\}$.
Also, when $i\neq \ell$, then $\left( f^{(\beta)}_{i\ell}\right)_\alpha  =
\left(\tilde f^{(\beta)}_{i\ell}\right)_\alpha $. It then follows that
\begin{equation}\label{eq:hittinggen}
\left(\tilde f^{(\beta)}_{i\ell}\right)_\alpha = \sum_{k=1}^m Q_{\alpha k}
\left\{p^{(k)}_{i} \left(\tilde f^{(\beta)}_{i+1,\ell}\right)_k +
q^{(k)}_{i} \left(\tilde f^{(\beta)}_{i-1,\ell}\right)_k\right\}.
\end{equation}

First, let $\Delta_i$ be the random
diagonal matrix with entries $\left\{p^{(1)}_{i}, \ldots,
p^{(m)}_{i}\right\}$.

Then, using \eqref{eq:hittinggen}, $\tilde f^{(\beta)}_{\ell\ell} = e^{(\beta)}$,
where $e^{(\beta)}_\alpha = \delta_{\alpha\beta}$, $\alpha\in
\{1,\ldots,m\}$, and
\begin{equation}\label{eq:main1}
\tilde f^{(\beta)}_{i\ell} = M_i \tilde f^{(\beta)}_{i+1,\ell}+
N_i \tilde f^{(\beta)}_{i-1,\ell}, \quad i\neq \ell, \beta\in
\{1,\ldots,m\},
\end{equation}
where
\begin{equation}\label{eq:MN}
M_i = Q\Delta_i , \qquad N_i = Q(I-\Delta_i),
\end{equation}
so that $M_i+N_i=Q$  for any  $i\in \dZ$. Note that for $i\neq
\ell$, $P^e_{\alpha i}(\tau_\ell<\infty) = 1$ for all
$\alpha\in\{1,\ldots,m\}$ if and only if
$
f_{i\ell} (e) = \tilde f_{i\ell}(e) = \sum_{\beta=1}^m
\tilde f^{(\beta)}_{i\ell}(e) = \mathbf{1}$, with $\mathbf{1}_\alpha=1$ for all $\alpha\in\{1,\ldots,m\}$.
Further note  that $\tilde f_{\ell\ell}=\mathbf{1}$.


\subsection{First case: $Q$ has rank 1}\label{ssec:caseind}

It then follows that for some positive vector $\pi$, $Q_{\alpha\beta}=\pi_\beta >0$ for all $\alpha,\beta
\in\{1,\ldots,m\}$. Note that in this case, the chain $(G_n,X_n)$ is obviously irreducible. This corresponds to choosing the regimes
independently, so $X$ is a Markov chain with transition matrix $P=P(e)$
given by $P_{ij} = p_i$ if $j=i+1$, $P_{ij} =
q_i$ if $j=i-1$, and $P_{ij}=0$, whenever $|j-i|>1$, where $p_i
= \sum_{\beta=1}^m \pi_\beta p^{(\beta)}_i$, $q_i = 1-p_i$,
$i\in\dZ$. Then it follows from \eqref{eq:main1} that $
f^{(\beta)}_{i\ell} = g_{i\ell}^{(\beta)}\mathbf{1}$, for every
$i\neq \ell$. Set $\sigma_i = \frac{q_i}{p_i}$, and $g_{i\ell} =
\sum_{\beta=1}^m g_{i\ell}^{(\beta)}$,  $i\in \dZ$.
Next, it is easy to check that for every $\beta\in\{1,\ldots,m\}$,
\begin{equation}\label{eq:betaind}
P^e_{\alpha i}\left(G_{\tau_\ell} = \beta| \tau_\ell <\infty \right)
= \frac{g_{i\ell}^{(\beta)}(e) }{ g_{i\ell}(e) } =  \left\{\begin{array}{ll}
\pi_\beta\frac{
q^{(\beta)}_{\ell+1}(e)}{q_{\ell+1}(e)} & \mbox{ for } i>\ell,\\
& \\
 \pi_\beta\frac{ p^{(\beta)}_{\ell{\color{red} -}1}(e)}{p_{\ell{\color{red} -}1}(e)} &  \mbox{ for } i<\ell.
 \end{array}\right.
 \end{equation}
Finally, since $X$ is itself a random walk in a random environment,
one can apply Theorem \ref{thm:alili}, to obtain the following
corollary.

\begin{corollary}\label{cor:ind}
If $E(\log{\sigma_0})>0$, then for any $(\alpha,i)
\in\{1,\ldots,m\}\times \dZ$,
$$
P^e_{\alpha i}\left( \disp \lim_{n\to\infty} X_n = -\infty
\right)=1, \; e \mbox{ a.s.}
$$
If $E(\log{\sigma_0})<0$, then for any $(\alpha,i)
\in\{1,\ldots,m\}\times \dZ$,
$$
P^e_{\alpha i}\left( \disp \lim_{n\to\infty} X_n = +\infty
\right)=1, \; e  \mbox{ a.s.}
$$
 If
$E(\log{\sigma_0})=0$, then for any $(\alpha,i)
\in\{1,\ldots,m\}\times \dZ$,
$$
P^e_{\alpha i}\left(  \disp  \liminf_{n\to\infty} X_n =
-\infty, \limsup_{n\to\infty} X_n = +\infty \right)=1, \; e
\mbox{ a.s.}
$$
\end{corollary}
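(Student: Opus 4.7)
The plan is to reduce directly to Theorem \ref{thm:alili} by exhibiting $X$ itself as a nearest-neighbor random walk in a random environment, thereby removing the regime coordinate $G_n$ from the picture. The rank-$1$ hypothesis on $Q$ is the key enabling observation: it forces $Q_{\alpha\beta}=\pi_\beta$, so the next regime is sampled from $\pi$ independently of the current one, and the marginal dynamics of $X$ depend on $G$ only through the fixed law $\pi$.

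First I would integrate out the regime by computing, for any environment $e$ and state $i\in\dZ$,
\begin{eqnarray*}
P^e(X_{n}=i+1 \mid X_{n-1}=i) &=& \sum_{\beta=1}^m \pi_\beta p^{(\beta)}_i(e) \;=\; p_i(e),\\
P^e(X_{n}=i-1 \mid X_{n-1}=i) &=& \sum_{\beta=1}^m \pi_\beta q^{(\beta)}_i(e) \;=\; q_i(e),
\end{eqnarray*}
independently of $G_{n-1}$, which confirms that under $P^e_{\alpha i}$ the process $X$ alone is a Markov chain with precisely the transition matrix described in the paragraph preceding the corollary. Thus the $G$-coordinate becomes irrelevant for questions about $\lim X_n$, $\liminf X_n$, $\limsup X_n$.

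Next I would verify that the scalar process $p=(p_i)_{i\in\dZ}$ defines a valid random environment in the sense of Section \ref{ssec:model}. Each $p_i$ is a convex combination of $(0,1)$-valued measurable functions on $(E,\cE,P)$, hence is $(0,1)$-valued and $\cE$-measurable; and the shift-covariance $p^{(\beta)}_i(Te)=p^{(\beta)}_{i+1}(e)$ passes through the linear combination to give $p_i(Te)=p_{i+1}(e)$. Since $T$ is ergodic on $(E,\cE,P)$, the process $p$ is stationary and ergodic, and so is $\sigma_i=q_i/p_i$.

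Finally I would invoke Theorem \ref{thm:alili} with $\alpha$ replaced by the scalar environment $p$ and $u=E(\log\sigma_0)$. The three cases $u<0$, $u>0$, $u=0$ of that theorem translate verbatim into the three conclusions of the corollary, upon noting that for every starting regime $\alpha$ the law of $X$ under $P^e_{\alpha i}$ coincides with the random walk law just described. There is no real obstacle here; the only thing to be careful about is the book-keeping of stationarity/ergodicity for the induced environment $p$, which the paragraph above handles in one line.
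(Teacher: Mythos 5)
Your proposal is correct and is essentially the paper's own argument: Section \ref{ssec:caseind} observes that rank-one $Q$ means $Q_{\alpha\beta}=\pi_\beta$, so $X$ alone is a nearest-neighbor random walk in the averaged environment $p_i=\sum_\beta \pi_\beta p_i^{(\beta)}$, and the corollary then follows by applying Theorem \ref{thm:alili}. Your additional check that the induced environment $p$ inherits the shift-covariance and ergodicity is a worthwhile (if routine) piece of book-keeping that the paper leaves implicit.
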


\begin{example}

For Game D, one finds that  $p$ is 3-periodic with values
$\{.299,.624,.624\}$. Using Corollary \ref{cor:ind}, one obtains
that
$
E(\log{\sigma_0}) = \frac{1}{3}\log{\mu}$,
where $\mu$ given by formula \eqref{eq:mu}. In this specific
example, $\mu = 0.8512 <1$, so that for any $(\alpha,i)\in
\{1,2\}\times \dZ$, $P^e_{\alpha i}\left(\disp \lim_{n\to\infty}
X_n = +\infty\right)=1$ a.s. More generally, the value of $\mu$ depends on $\pi_1$ which is the probability of choosing Game A. It then follows that 
$$
\mu(\pi_1) = \left(\frac{1}{.099+.4\pi_1}-1\right)\left(\frac{1}{.749-.25\pi_1}-1\right)^2.
$$
Figure \ref{fig:graphmuD} illustrates
that $\mu(\pi_1)<1$ quite often.

\begin{figure}[h!]
\begin{center}
\includegraphics[scale = 0.35]{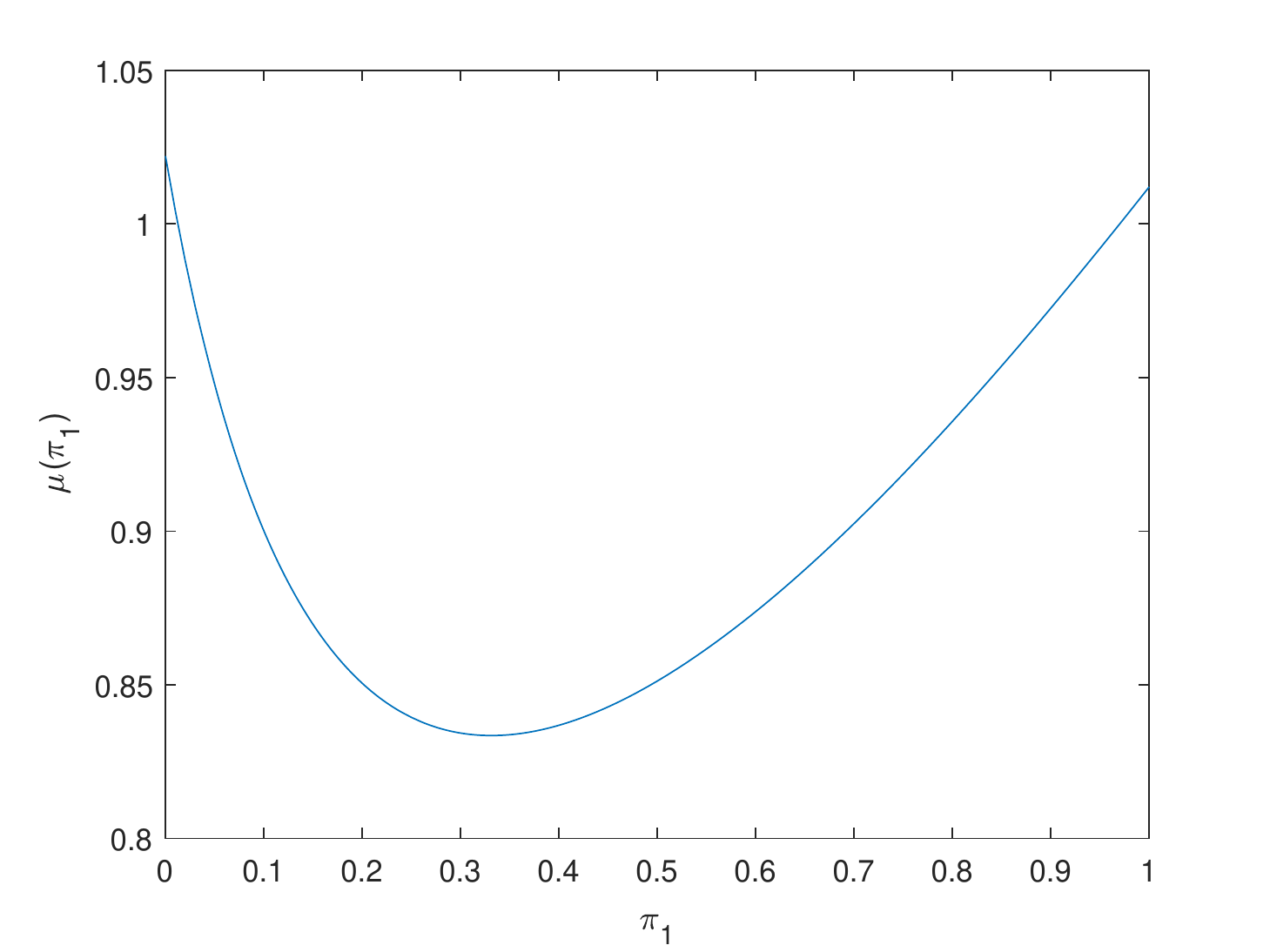}
\caption{Graph of $\mu$ for Game D as a function of the probability $\pi_1$ of choosing at
Game  A. }\label{fig:graphmuD}
\end{center}
\end{figure}
\end{example}
\bigskip

\subsection{Second case: $Q$ has full rank}\label{ssec:full}

Since  $Q$ is invertible, it follows that  $M_i$ and $N_i$, as
defined in \eqref{eq:MN}, are invertible.
For $i \in \dZ$, set  $A_i =  \left(\begin{array}{cc} M_i^{-1} & -\sigma_i\\
I & 0 \end{array}\right) $,  where $\sigma_i = M_i^{-1} N_i =
\Delta_i^{-1}-I$.
 Since each $A_i$ is invertible and the
sequence of $2m\times 2m$ matrices $A_i$ is stationary and ergodic,
it follows from Oseledec's Theorem (Theorem \ref{thm:oseledec}) that
with probability 1, the random sets
$$
\bar V_0 =  \left\{v\in \mathbb{R}^{2m}: \lim_{n\to\infty}
n^{-1}\log{\|A_n \cdots A_1 v\|} \le 0 \right\}
$$
and
$$
\bar V_{0-} =  \left\{v\in \mathbb{R}^{2m}: \lim_{n\to\infty}
n^{-1}\log{ \|A_n \cdots A_1 v\|} < 0\right\}
$$
are subspaces with deterministic dimensions $\bar d_0$ and $\bar
d_{0-}$ respectively, provided $\log^+ {\|A_1\|}$ is integrable.
Note that for any $i\in\dZ$, $A_i \un = \un$.
The norm ${\| \|}$ is arbitrary but fixed throughout since they are all equivalent.

Also, with probability 1, the random sets
$$
\tilde V_0 =  \left\{v\in \mathbb{R}^{2m}: \lim_{n\to\infty}
n^{-1}\log{\left\|\left(A_n \cdots A_1\right)^{-1} v\right\|} \le 0
\right\}
$$
and
$$
\tilde V_{0-} =  \left\{v\in \mathbb{R}^{2m}: \lim_{n\to\infty}
n^{-1}\log{ \left\|\left(A_n \cdots A_1\right)^{-1} v\right\|}
<0\right\}
$$
are subspaces with deterministic dimensions $\tilde d_0$ and $\tilde
d_{0-}$ respectively, provided $\log^+ {\left\|A_1^{-1} \right\|}$
is integrable.\\

The proof of the next result is given in Appendix
\ref{pf:thmoseledec}.

\begin{theorem}\label{thm:oseledecfull}
Suppose that $Q$ is invertible and assume that $\log^+ {\|A_1\|}$
and $\log^+ {\left\|A_1^{-1} \right\|}$ are integrable.  Then $\bar d_0\ge m$, $\tilde d_0\ge m$, $\bar d_0 > \bar d_{0-}$, and  $\tilde d_0 > \tilde d_{0-}$. Furthermore,
\begin{enumerate}
\item If $\bar d_0=m$, then $\gamma_+=0$ and  $P^e_{\alpha i}(\tau_\ell <\infty )=1$, $e$
a.s., for any $\alpha\in\{1,\ldots,m\}$ and any $i>\ell$.

\item $\bar d_{0-} \ge  m$ if and only if $\gamma_+ <0$. In this case, there holds
$\lim_{i\to \infty} P^e_{\alpha i}(\tau_\ell <\infty )=0$ for any
$\alpha\in\{1,\ldots,m\}$.

\item If $\tilde d_0=m$, then  $\gamma_-=0$,  and  $P^e_{\alpha i}(\tau_\ell <\infty )=1$, $e $
a.s., for any $\alpha\in\{1,\ldots,m\}$ and any $i <\ell$.

\item $\tilde d_{0-} \ge  m$ if and only if $\gamma_- <0$. In this case, there holds
$\lim_{i\to -\infty}P^e_{\alpha i}(\tau_\ell <\infty )=0$ for any
$\alpha\in\{1,\ldots,m\}$.
\end{enumerate}
\end{theorem}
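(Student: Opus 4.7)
The plan is to pass from hitting probabilities to the transfer-matrix product through the recursion \eqref{eq:main1}, then read off everything from Oseledec's theorem applied to the matrices $A_i$.

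First I would establish the lower bounds $\bar d_0 \ge m$ and $\tilde d_0 \ge m$ together with the strict inclusions $\bar d_0 > \bar d_{0-}$ and $\tilde d_0 > \tilde d_{0-}$. For $\bar d_0$, fix $\beta \in \{1,\ldots,m\}$ and observe from \eqref{eq:main1} that $(\tilde f^{(\beta)}_{n+1,0},\tilde f^{(\beta)}_{n,0})^T = A_n\cdots A_1\,W^{(\beta)}$ with $W^{(\beta)}:=(\tilde f^{(\beta)}_{1,0},e^{(\beta)})^T$. Since all components are probabilities, the iterates stay in $[0,1]^{2m}$, so $W^{(\beta)} \in \bar V_0$; the second components being the standard basis $e^{(\beta)}$ force the $m$ vectors $W^{(\beta)}$ to be linearly independent. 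For $\tilde d_0$, the same idea applied to $\tilde f^{(\beta)}_{-n,0}$, which obeys the recursion for $i\le -1$ with boundary value $e^{(\beta)}$ at $i=0$, provides $m$ linearly independent vectors in $\tilde V_0$. The strict inclusion comes from the constant vector $\un$: a direct computation using $Q\mathbf 1=\mathbf 1$ shows $A_i\un=\un$ for every $i$, so $\un \in \bar V_0\setminus \bar V_{0-}$ (and likewise for $\tilde V_{0-}$).

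For part (1), assume $\bar d_0=m$. Then $\{W^{(\beta)}\}_{\beta=1}^m$ is a basis of $\bar V_0$, and expanding $\un = \sum_\beta c_\beta W^{(\beta)}$ and matching the second component yields $c_\beta \equiv 1$, so matching the first component forces $\tilde f_{1,0}=\mathbf 1$. Hence $P^e_{\alpha 1}(\tau_0<\infty)=1$ for every $\alpha$, and combining with the stationarity of the environment (replacing $e$ by $T^{k-1}e$ gives $P^e_{\alpha k}(\tau_{k-1}<\infty)=1$ for all $k$) and the strong Markov property yields $P^e_{\alpha i}(\tau_\ell<\infty)=1$ for every $i>\ell$; in particular $\|f_{n,0}\|=1$ and $\gamma_+=0$. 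Part (3) is obtained identically from $\tilde d_0=m$ using the backward natural vectors.

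For part (2), the implication $\gamma_+<0 \Rightarrow \bar d_{0-}\ge m$ is easy: since each $\tilde f^{(\beta)}_{n,0}\ge 0$ componentwise and $\sum_\beta\tilde f^{(\beta)}_{n,0}=\tilde f_{n,0}=f_{n,0}$ decays at rate $\gamma_+<0$, each $\|\tilde f^{(\beta)}_{n,0}\|$ also decays exponentially, so $W^{(\beta)}\in\bar V_{0-}$. The reverse implication, $\bar d_{0-}\ge m \Rightarrow \gamma_+<0$, is the main obstacle. The idea is to show that the projection $\pi_2:\bar V_{0-}\to \mathbb R^m$ onto the second block of coordinates is in fact surjective, hence an isomorphism since $\bar d_{0-}\le \bar d_0 - 1$ combined with the above constructions controls its kernel via the $(u,0)^T$-directions whose forward iterates reproduce $u$ in the second slot after one step. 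Surjectivity supplies, for each $\beta$, a vector $V^{(\beta)}\in\bar V_{0-}$ with $\pi_2(V^{(\beta)})=e^{(\beta)}$, whence $W^{(\beta)}-V^{(\beta)}=(u^{(\beta)},0)^T\in \bar V_0$. Summing over $\beta$ and combining with $\un\in \bar V_0\setminus \bar V_{0-}$, together with the invariance $A_i\un=\un$ and the nonnegativity of the hitting probabilities, forces $u^{(\beta)}=0$, so $W^{(\beta)}\in \bar V_{0-}$ and consequently $\gamma_+<0$. The corresponding statement $\lim_{i\to\infty}P^e_{\alpha i}(\tau_\ell<\infty)=0$ then follows from the exponential decay $\|f_{i\ell}\|\le e^{i\gamma_+(1+o(1))}$.

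Parts (3) and (4) follow mutatis mutandis by replacing the forward transfer matrix $A$ by its inverse $A^{-1}$ (equivalently, iterating the recursion towards $-\infty$) and using that $\tilde V_0,\tilde V_{0-}$ play for $A^{-1}$ the role that $\bar V_0,\bar V_{0-}$ play for $A$; the natural basis vectors this time are $(e^{(\beta)},\tilde f^{(\beta)}_{-1,0})^T$. I expect the technical heart of the proof to be the kernel/surjectivity analysis of the projection $\pi_2|_{\bar V_{0-}}$ in step (2): bounding its kernel requires combining the block structure of $A_i$ (the identity block guarantees that a direction $(u,0)^T$ injects $u$ into the second coordinate after one step) with ergodicity and the positivity of the hitting probabilities, in the spirit of Key's original strip arguments.
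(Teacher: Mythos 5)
Your setup — the vectors $W^{(\beta)}=\bigl(\tilde f^{(\beta)}_{1,0},e^{(\beta)}\bigr)^{\top}$ giving $\bar d_0\ge m$, the invariant vector with all components equal to $1$ giving $\bar d_0>\bar d_{0-}$, the basis-expansion argument for part (1), and the observation that nonnegativity plus $\sum_\beta\tilde f^{(\beta)}_{n,0}=f_{n,0}$ gives the direction $\gamma_+<0\Rightarrow\bar d_{0-}\ge m$ — all coincide with the paper's proof. The gap is in the converse direction of (2), which is the technical heart of the theorem, and your sketch of it does not close. Two things are missing. First, the injectivity of $\pi_2|_{\bar V_{0-}}$ (equivalently, that no nonzero element of $\bar V_{0-}$ has vanishing last $m$ components) is not a consequence of the block structure of $A_i$: knowing that $A_{\ell+1}(u,0)^{\top}=(M_{\ell+1}^{-1}u,\,u)^{\top}$ and that the forward orbit decays does not by itself contradict $u\neq 0$. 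The paper proves this by a genuinely probabilistic argument: a decaying orbit $h_i=(z_{i+1},z_i)^{\top}$ defines, via $g(\alpha,i)=(z_i)_\alpha$, a \emph{bounded} harmonic function for the chain $(G_n,Y_n)$ absorbed at level $\ell$; optional stopping together with the dichotomy ``absorbed or $Y_n\to+\infty$'' gives $g(\alpha,i)=E(\cM_\infty)$, which vanishes when $z_\ell=0$. You gesture at ``Key's original strip arguments'' but never supply this step.

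Second, and more seriously, your concluding move fails even if surjectivity is granted. You write $W^{(\beta)}-V^{(\beta)}=(u^{(\beta)},0)^{\top}\in\bar V_0$ and claim that summing over $\beta$ and invoking nonnegativity forces $u^{(\beta)}=0$. But when $\bar d_{0-}\ge m$ one has $\bar d_0\ge m+1$, so $\pi_2|_{\bar V_0}$ necessarily has a nontrivial kernel: $\bar V_0$ \emph{does} contain nonzero vectors of the form $(u,0)^{\top}$, and membership of $W^{(\beta)}-V^{(\beta)}$ in $\bar V_0$ with vanishing second block therefore implies nothing. The identification of the hitting probabilities with the decaying orbit cannot be done by linear algebra in $\bar V_0$; the paper does it with a second martingale argument, comparing the bounded martingale $\tilde\cM_n=(\tilde f_{Y_n,\ell})_{G_n}$ with $\cM_n=g(G_n,Y_n)$ built from the element of $\bar V_{0-}$ whose last block is $\mathbf 1$, and using that $E\{\tilde\cM_\infty\I(\tilde\tau_\ell=\infty)\}=0$ while $\cM_\infty=0$ on $\{\tilde\tau_\ell=\infty\}$ to conclude $\tilde f_{i\ell}=z_i$ exactly, whence $\gamma_+<0$. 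Without these two optional-stopping arguments your proof of (2) (and, mutatis mutandis, (4)) is incomplete.
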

\begin{remark} \label{rem:ruelle0}
It follows from Theorem \ref{thm:main} that  $\max(\gamma_+,\gamma_-)=0$, so Theorem \ref{thm:oseledecfull} implies that one cannot have at the same time $\bar d_{0-} \ge  m$ and
 $\tilde d_{0-}\ge m$.
Hence, if  $\bar d_{0-} \ge  m$, there holds $\gamma_+<0$ and $\gamma_- = 0$. Combining
Theorem \ref{thm:oseledecfull} with Proposition \ref{prop:time3} first yields $\|f_{i\ell}(e)\|<1$ for any $i>\ell$.
Proposition \ref{prop:time4} together with Proposition \ref{prop:time3} yield that
$\|f_{i\ell}(e)\|=1$ holds for any $i<\ell$. If in addition, $\tilde d_0 = m$, then
$$
P^e_{\alpha i}\left(\disp\lim_{n\to\infty} X_n = +\infty
\right) =1, \; e  \mbox{ a.s.}
$$
for every $(\alpha,i)\in \{1,\ldots,m\}\times \dZ$, using Lemma \ref{lem:main0}. \\

On the other hand, if $\tilde d_{0-} \ge  m$,  there holds $\gamma_-<0$, $\gamma_+=0$. If in addition, $\bar d_0 = m$, then  it follows in a similar fashion that
$$
P^e_{\alpha i}\left(\disp\lim_{n\to\infty} X_n = -\infty
\right) =1, \; e  \mbox{ a.s.}
$$
for every $(\alpha,i)\in \{1,\ldots,m\}\times \dZ$.\\

Note that these two transient behavior occur without the assumption that $(G_n,X_n)$ is irreducible.
Finally, if $\bar d_{0-} < m$ and $\tilde d_{0-}  < m $, then $\gamma_+=\gamma_-=0$; nevertheless,  one cannot show in general that a form of recurrence occurs, i.e.,
$$
P^e_{\alpha i}\left(\disp\liminf_{n\to\infty} X_n = -\infty,
\limsup_{n\to\infty} X_n = +\infty  \right) =1,  \; e  \mbox{ a.s.}
$$
for every $(\alpha,i)\in \{1,\ldots,m\}\times \dZ$.
 In fact, a counterexample is given in Example \ref{ex:counter}, where $(G_n,X_n)$ is not irreducible.
\end{remark}

Note that to obtain a ``Parrondo's paradox'' which is basically a transient phenomenon, we do not need the hypothesis of irreducibility. In fact, combining Theorem \ref{thm:alili} and Remark \ref{rem:ruelle0},  we end up with the following sufficient condition.

\begin{corollary}\label{cor:parrondo}
If for every $\alpha \in \{1,\ldots,m\}$, there holds $ E\left(\log{\sigma_0^{(\alpha)}}\right) > 0 $,    $\tilde d_0 = m$, and $\bar d_{0-}\ge m$, then  for any
$(\alpha,i)\in \{1,\ldots,m\}\times \dZ$ and any $e$ a.s.,
$$
P^e_i\left(\disp \lim_{n\to\infty} X_n^{(\alpha)} = -\infty\right) =1  \quad \text{ and } \quad
P^e_{\alpha
i}\left(\disp\lim_{n\to\infty} X_n = +\infty\right) =1.
$$
\end{corollary}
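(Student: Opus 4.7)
The plan is to derive the two conclusions separately: the first is immediate from the single-chain Alili theorem, while the second is obtained by feeding the dimensional hypotheses through Theorem \ref{thm:oseledecfull} and then into Lemma \ref{lem:main0}(i).

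For the statement on each individual walk $X^{(\alpha)}$, I would simply note that $X^{(\alpha)}$ is a nearest-neighbor random walk in the stationary ergodic environment $p^{(\alpha)}$, so the hypothesis $E\bigl(\log \sigma_0^{(\alpha)}\bigr) > 0$ puts us in the first case of Theorem \ref{thm:alili}, yielding $P^e_i\bigl(\lim_{n\to\infty} X_n^{(\alpha)} = -\infty\bigr) = 1$ for almost every $e$ and every $i \in \dZ$. This part is essentially for free.

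The substantive step concerns the regime-switching walk $X$. I would start from the observation that $\bar d_{0-} \ge m$, via Theorem \ref{thm:oseledecfull}(2), supplies both $\gamma_+ < 0$ and $\lim_{i\to\infty} P^e_{\alpha i}(\tau_\ell < \infty) = 0$ for each $\alpha$. The latter means $\|f_{i\ell}(e)\| < 1$ for all sufficiently large $i$; and since $\|f_{i\ell}\|$ would be identically $1$ on the entire half-line $\{i > \ell\}$ as soon as it equalled $1$ at a single such $i$ (Proposition \ref{prop:time3}), the contrapositive forces $(f_{i\ell})_\alpha \le \|f_{i\ell}\| < 1$ for every $\alpha$ and every $i > \ell$. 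At the same time $\tilde d_0 = m$ activates part (3) of Theorem \ref{thm:oseledecfull}, producing $P^e_{\alpha i}(\tau_\ell < \infty) = 1$ for every $\alpha$ and every $i < \ell$, almost surely in $e$. Both halves of condition \eqref{eq:condthm+1}--\eqref{eq:condthm+2} are then in place, and Lemma \ref{lem:main0}(i) closes the argument by giving $P^e_{\alpha i}\bigl(\lim_{n\to\infty} X_n = +\infty\bigr) = 1$ for every $(\alpha,i)$, almost surely.

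The point worth underlining, and the only minor subtlety, is that Lemma \ref{lem:main0}(i)--(ii) --- unlike its part (iii) --- does not require irreducibility of $(G_n, X_n)$; the dimensional hypothesis $\tilde d_0 = m$ takes over the role of irreducibility in propagating the almost-sure hitting of $\ell$ from below across all starting regimes $\alpha$. No genuinely new computations are needed: the corollary packages the argument already sketched in Remark \ref{rem:ruelle0}, under the additional single-chain drift condition that forces each $X^{(\alpha)}$ to diverge to $-\infty$, thus producing a bona fide Parrondo-type paradox.
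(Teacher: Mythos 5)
Your proof is correct and follows essentially the same route as the paper, which obtains the corollary by combining Theorem \ref{thm:alili} (for each individual walk $X^{(\alpha)}$) with the argument of Remark \ref{rem:ruelle0}, i.e.\ Theorem \ref{thm:oseledecfull}(2)--(3) together with Proposition \ref{prop:time3} to verify conditions \eqref{eq:condthm+1}--\eqref{eq:condthm+2}, and then Lemma \ref{lem:main0}(i). Your remark that parts (i)--(ii) of Lemma \ref{lem:main0} need no irreducibility is precisely the point the paper emphasizes when noting that this transient (Parrondo) behavior holds without assuming $(G_n,X_n)$ irreducible.
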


Note also that for an invertible measure preserving mapping $T$, it follows from
\citet[Theorem 3.1]{Ruelle:1979}  that the Lyapunov exponents
$\tilde \lambda_1, \ldots, \tilde \lambda_{2m}$ associated with the time reversed sequence
$
\left(A_n \cdots A_1\right)^{-1}
$ in Oseledec's Theorem (Theorem \ref{thm:oseledec})
are $-\bar\lambda_{2m}, \ldots, -\bar\lambda_1$, that is, minus those associated with
$\left(A_n \cdots A_1\right)$. Hence,
$\tilde d_{0-} = \dim{\tilde V_{0-}} = 2m - \bar d_0$ and $\tilde
d_0 = \dim{\tilde V_0} = 2m - \bar d_{0-}$.
 In particular, if $\bar d_0=m$, then $\tilde d_{0-}=m$
and if $\bar d_{0-}=m$, then  $\tilde d_0=m$.

The proof of the following corollary follows directly from Theorem \ref{thm:oseledecfull} and Remark \ref{rem:ruelle0}.

\begin{corollary}\label{cor:ruelle}
Suppose that the measure preserving mapping $T$ defined in Section \ref{sec:RSRWRE} is invertible.
If $\bar d_0 = m$, then
 for every $(\alpha,i)\in
\{1,\ldots,m\}\times \dZ$,
$P^e_{\alpha i}\left(\disp\lim_{n\to\infty}
X_n = -\infty\right) =1$, $e$ a.s.
Similarly, if $\tilde d_{0} =m$,  then for any
$(\alpha,i)\in \{1,\ldots,m\}\times \dZ$, $P^e_{\alpha
i}\left(\disp\lim_{n\to\infty} X_n = +\infty\right) =1$,  $e$ a.s..
\end{corollary}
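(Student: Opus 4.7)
The plan is to deduce the corollary directly by combining the Ruelle duality for Lyapunov exponents (stated immediately before the corollary) with Remark~\ref{rem:ruelle0}, which already extracts the transient asymptotic behavior from Theorem~\ref{thm:oseledecfull} without invoking irreducibility of $(G_n,X_n)$. The only task is to check that the dimension hypothesis $\bar d_0=m$ (resp.\ $\tilde d_0 = m$) triggers the correct pair of inequalities required by Remark~\ref{rem:ruelle0}.

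First I would handle the case $\bar d_0 = m$. Since $T$ is invertible, the cited consequence of \citet[Theorem 3.1]{Ruelle:1979} gives $\tilde d_{0-} = 2m-\bar d_0 = m$. So I now have $\bar d_0 = m$ together with $\tilde d_{0-}\ge m$. By Theorem~\ref{thm:oseledecfull}(1) the former yields $\gamma_+=0$ and $P^e_{\alpha i}(\tau_\ell<\infty)=1$ for every $i>\ell$; by Theorem~\ref{thm:oseledecfull}(4) the latter yields $\gamma_-<0$ and $\lim_{i\to-\infty}P^e_{\alpha i}(\tau_\ell<\infty)=0$. These are exactly the hypotheses analysed in the second paragraph of Remark~\ref{rem:ruelle0}, which concludes, using Propositions~\ref{prop:time3} and \ref{prop:time4} together with Lemma~\ref{lem:main0}(ii), that
$$
P^e_{\alpha i}\!\left(\lim_{n\to\infty} X_n = -\infty\right) = 1, \quad e \text{ a.s.},
$$
for every $(\alpha,i)\in\{1,\ldots,m\}\times\dZ$, which is the first assertion.

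The case $\tilde d_0 = m$ is symmetric: Ruelle's relation gives $\bar d_{0-} = 2m-\tilde d_0 = m$, so $\bar d_{0-}\ge m$ and $\tilde d_0 = m$ hold simultaneously. Theorem~\ref{thm:oseledecfull}(2)--(3) then provide $\gamma_+<0$, $\gamma_-=0$, and the corresponding hitting-probability information, and the first paragraph of Remark~\ref{rem:ruelle0} delivers $P^e_{\alpha i}(\lim_{n\to\infty}X_n = +\infty)=1$, $e$ a.s., for every $(\alpha,i)$.

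There is no genuine obstacle here; the whole point of the corollary is to record that once $T$ is invertible the two conditions of Remark~\ref{rem:ruelle0} collapse to a single dimension hypothesis via Ruelle's time-reversal identity. The only thing worth stressing in the write-up is that the integrability of $\log^+\|A_1\|$ and $\log^+\|A_1^{-1}\|$ needed to apply Oseledec's theorem to both $A_n\cdots A_1$ and its inverse is already part of the standing assumptions of Theorem~\ref{thm:oseledecfull}, so Ruelle's duality can be invoked without additional hypotheses.
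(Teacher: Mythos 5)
Your proposal is correct and follows essentially the same route as the paper: the paper's proof is exactly the combination of the Ruelle time-reversal identity (giving $\tilde d_{0-}=2m-\bar d_0$ and $\tilde d_0 = 2m-\bar d_{0-}$) with Theorem \ref{thm:oseledecfull} and the two paragraphs of Remark \ref{rem:ruelle0}, which in turn rest on Propositions \ref{prop:time3}--\ref{prop:time4} and Lemma \ref{lem:main0}. Your added observation that the integrability hypotheses are already standing assumptions of Theorem \ref{thm:oseledecfull} is accurate and harmless.
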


\begin{example}\label{ex:periodic2}[Periodic probabilities] Suppose that the processes $p^{(\alpha)}$ are periodic for all $\alpha\in \{1,\ldots,m\}$, and denote by $\mathfrak{p}$ the least common multiple of their respective periods. It then follows that the measure preserving mapping is invertible. In fact, if $e$ is such that $e^{(\alpha)}(i) = p_i^{(\alpha)}$, then $E = \{e, Te, \ldots, T^{\mathfrak{p}-1}e\}$, where $Te(i) = e(i+1)$, $i\in \dZ$. Further set  $\cA = A_\mathfrak{p} \cdots A_1$ .
Then $\bar d_0$ is the number of eigenvalues of $\cA$ less than or
equal to $1$ in absolute value, while $\tilde d_0$ is the number of
eigenvalues of $\cA$ greater of equal to $1$ in absolute value.
Game C is an
example of a $Q$ with full rank and periodic probabilities with period $\mathfrak{p}=3$. In this
case
$m=4$, $\bar d_0=6$, and $\tilde d_0=4$.
 As a result, from Corollary \ref{cor:ruelle}, $P^e_{\alpha
i}\left(\displaystyle\lim_{n\to\infty} X_n = +\infty\right) =1$,  $e$ a.s., for every $(\alpha,i)\in \{1,\ldots,m\}\times \dZ$. This explains why Game C has a paradoxical behavior.

 Suppose now that one alternates between Games A and B, i.e., one plays Game C'. Then $Q = \left(\begin{array}{cc} 0 & 1\\ 1 & 0\end{array}\right)$,
$m=2$, $\bar d_0=2$, and $\tilde d_0=4$.  

From Corollary \ref{cor:ruelle}, $P^e_{\alpha
i}\left(\displaystyle\lim_{n\to\infty} X_n = -\infty\right) =1$,  $e$ a.s., for every $(\alpha,i)\in \{1,\ldots,m\}\times \dZ$. Hence, in this case, the game does not have a paradoxical behavior.

\end{example}

\begin{example}\label{ex:counter}
Suppose that  $Q = \left(\begin{array}{cc} 0 & 1\\ 1 & 0\end{array}\right)$, and consider the case of periodic probabilities of period $\mathfrak{p}=2$.  Then, starting from $(G_0,X_0)= (1,0)$ and environment $e$, $(X_n)$ is a random walk, and if  $\rho_k(e)  = \frac{P^e(X_{n+1}=k-1|X_n=k)}{P^e(X_{n+1}=k+1|X_n=k)}$, then $\rho_k(e)= \sigma_1^{(2)}(e) = \frac{q_1^{(2)}(e)}{p_1^{(2)}(e)}$ if $k$ is odd, and $\rho_k(e) = \sigma_0^{(1)}(e) =\frac{q_0^{(1)}(e)}{p_0^{(1)}(e)}$ if $k$ is even. Therefore, for any $k\in \dZ$,  $\rho_k(e)\rho_{k+1}(e) =  \lambda_1(e) = \sigma_2^{(1)}(e)\sigma_1^{(2)}(e)$, and
$\lambda_2(e) = \lambda_1(Te) = \sigma_1^{(1)}(e)\sigma_2^{(2)}(e)$. Note that
the eigenvalues of $A_2(e)A_1(e)$ are $1,1,\lambda_1(e), \lambda_2(e)$, which are the same as the eigenvalues of $A_2(Te)A_1(Te)$.
As in \citet{Alili:1999}, set
$$
S(e) = \sum_{n=0}^\infty \rho_1(e) \cdots \rho_n(e), \quad
F(e) = \sum_{n=0}^\infty \frac{1}{\rho_{-1}(e)} \cdots \frac{1}{\rho_{-n}(e)}.
$$
Then $S$ is finite iff $\lambda_1<1$ and $F$ is finite iff $\lambda_1 >1$.

As a numerical example, for any  $i\in \mathbb{Z}$, define $p_i^{(1)}= 0.49$, $p_{2i}^{(2)} =  0.48$, and $p_{2i-1}^{(2)} = 1/1.95$.
Here $E = \{e,Te\}$, where $e(i) = p_i^{(2)}$, so $Te (i) = p_{i+1}^{(2)}$.
Then, if $k$ is even, $\rho_k(e) = 1.0408$, while if $k$ is odd, then $\rho_k(e) = 0.95$  and $\rho_k(Te) = 1.0833$. Hence,
$\lambda_1(e) = \rho_0(e)\rho_1(e) = 0.9888 <1$ and $\lambda_2(e) = \rho_0(Te)\rho_1(Te) = 1.1276>1$.    As a result, $\bar d_0=3 = \tilde d_0$.
Also, $S(e) <
\infty$, $S(Te)= \infty$, $F(e) = \infty$, and $F(Te)< \infty$. Thus, starting from $P_{(1,0)}^e(X_n \to +\infty)=1$ and $P_{(1,0)}^{Te}(X_n \to -\infty)=1$.

On the other hand, starting from $(G_0,X_0)= (2,0)$ and environment $e$, $(X_n)$ is a random walk with $\rho_k(e) = \sigma_1^{(1)}(e) = \frac{q_1^{(1)}(e)}{p_1^{(1)}(e)}$ if $k$ is odd, and $\rho_k(e) = \sigma_0^{(2)}(e) = \frac{q_0^{(2)}(e)}{p_0^{(2)}(e)}$ if $k$ is even. As a result, if $k$ is even, $\rho_k(e) = 1.0833 $ and $\rho_k(Te) = 0.95$, while if $k$ is odd, then $\rho_k(e) = 1.0408$.  Hence
$\rho_0(e)\rho_1(e) = 1.1276>1$ and $\rho_0(Te)\rho_1(Te) = 0.9888 <1$.
Therefore $S(e) =  \infty$, $S(Te) < \infty$, $F(e) < \infty$, and $F(Te) =  \infty$. Thus, $P_{(2,0)}^e(X_n \to -\infty)=1$ and $P_{(2,0)}^{Te}(X_n \to +\infty)=1$.
Summarizing, for the same environment, the asymptotic behavior of the random walk depends on the starting point,
and for the same starting point, the asymptotic behavior of the random walk depends on the environment.
This shows that the case $\bar d_0>m$ and $\tilde d_0>m$ can lead to chaotic behavior.

\end{example}

\bigskip

\subsection{General case: $Q$ has rank $r$ }\label{ssec:gen}

Suppose that $Q$ has rank $1<r < m$. One can assume, without loss of
generality that $Q = \left(\begin{array}{c} \pi
\\ \Theta \pi\end{array}\right)$, where $\pi \in
\dR^{r\times m}$ has rank $r$, and $\Theta \in \dR^{(m-r)\times r}$, so
that $\Theta \mathbf{1}_r = \mathbf{1}_{(m-r)}$.
 Further let $\Delta_i^{(1)}$ be the $r\times r $ diagonal matrix formed with the first $r$ rows and columns of $\Delta_i$, and let $\Delta_i^{(2)}$ stand for the diagonal matrix formed with the last $m-r$ rows and columns of $\Delta_i$. Finally, let $\pi^{(1)}$ be the matrix composed for the first $r$ columns of $\pi$ and set $\pi^{(2)}$  for the remaining $m-r$ columns of $\pi$.

For  $i\in \dZ$, set $\check M_ i = \pi^{(1)}\Delta_i^{(1)} + \pi^{(2)}\Delta_i^{(2)}\Theta$ and $\check N_ i = \pi^{(1)}\left(I-\Delta_i^{(1)} \right)+ \pi^{(2)}\left(I-\Delta_i^{(2)}\right)\Theta$.
%

\begin{hyp}\label{hyp:inv}
With probability $1$, $\check M_i$ and  $\check N_i$ are  invertible.
\end{hyp}
Under this assumption, for any $i\in \dZ$, define the matrices 
$\check A_i = \left(\begin{array}{cc} {\check M_i}^{-1} & - \check \sigma_i \\ \mathbf{1}_r & 0\end{array}\right)$ and
$\check B_i =  \left(\begin{array}{cc} \check N_i^{-1} & -\check \sigma_i^{-1}\\
I & 0 \end{array}\right) $,  where $\check \sigma_i = {\check M_i}^{-1}  \check N_i$.
Note that both $\check M_i$  and  $\check N_i$ are stationary ergodic sequences since $\Theta$ is not random.
As before, it follows from Oseledec's Theorem (Theorem \ref{thm:oseledec}) that
with probability 1, the random sets
$$
\check V_0 =  \left\{v\in \mathbb{R}^{2r}: \lim_{n\to\infty}
n^{-1}\log{\|\check A_n \cdots \check A_1 v\|} \le 0 \right\}
$$
and
$$
\check V_{0-} =  \left\{v\in \mathbb{R}^{2r}: \lim_{n\to\infty}
n^{-1}\log{ \|\check A_n \cdots \check A_1 v\|} < 0\right\}
$$
are subspaces with deterministic dimensions $\check d_0$ and $\check
d_{0-}$ respectively, provided $\log^+ {\|\check A_1\|}$ is integrable.
Also, with probability 1, the random sets
$$
\inw V_0 =  \left\{v\in \mathbb{R}^{2r}: \lim_{n\to\infty}
n^{-1}\log{\left\|\left(\check A_n \cdots \check A_1\right)^{-1} v\right|} \le 0
\right\}
$$
and
$$
\inw V_{0-} =  \left\{v\in \mathbb{R}^{2r}: \lim_{n\to\infty}
n^{-1}\log{ \left\|\left(\check  A_n \cdots \check A_1\right)^{-1} v\right\|}
<0\right\}
$$
are subspaces with deterministic dimensions $\inw d_0$ and $\inw
d_{0-}$ respectively, provided $\log^+ {\left\|\check A_1^{-1} \right\|}$
is integrable.

The proof of the following result is given in Appendix
\ref{pf:oseledecgen}.

\begin{theorem}\label{thm:oseledecgen}
Suppose that Hypothesis \ref{hyp:inv} holds and that $\log^+ {\|\check A_1\|}$
and $\log^+ {\left\|\check A_1^{-1} \right|}$ are integrable. Then $\check d_0\ge r$, $\inw d_0\ge r$, $\check d_0 > \check d_{0-}$, and  $\inw d_0 > \inw d_{0-}$. Furthermore,
\begin{enumerate}
\item If $\check d_0=r$, then $\gamma_+=0$ and  $P^e_{\alpha i}(\tau_\ell <\infty )=1$, $e$
a.s., for any $\alpha\in\{1,\ldots,m\}$ and any $i>\ell$.

\item $\check d_{0-} \ge  r$ if and only if $\gamma_+ <0$. In this case,
$\lim_{i\to \infty} P^e_{\alpha i}(\tau_\ell <\infty )=0$ for any
$\alpha\in\{1,\ldots,m\}$.

\item If $\inw d_0=r$, then  $\gamma_-=0$,  and  $P^e_{\alpha i}(\tau_\ell <\infty )=1$, $e $
a.s., for any $\alpha\in\{1,\ldots,m\}$ and any $i <\ell$.

\item $\inw d_{0-} \ge r$ if and only if $\gamma_- <0$. In this case,
$\lim_{i\to -\infty}P^e_{\alpha i}(\tau_\ell <\infty )=0$ for any
$\alpha\in\{1,\ldots,m\}$.
\end{enumerate}
\end{theorem}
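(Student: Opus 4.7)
The plan is to reduce the rank-$r$ recursion to a $2r$-dimensional Oseledec analysis entirely parallel to the one underlying Theorem \ref{thm:oseledecfull}, and then replay that argument with $r$ in place of $m$. Factor $Q=\binom{I_r}{\Theta}\pi$, so that $\mathrm{im}(Q)$ equals the $r$-dimensional column span of $\binom{I_r}{\Theta}$. Since $M_i=Q\Delta_i$ and $N_i=Q(I-\Delta_i)$ both map into $\mathrm{im}(Q)$, the recursion \eqref{eq:main1} forces $\tilde f^{(\beta)}_{i\ell}\in\mathrm{im}(Q)$ for every $i\neq\ell$, and I would define $u_i^{(\beta)}\in\mathbb{R}^r$ via $\tilde f^{(\beta)}_{i\ell}=\binom{I_r}{\Theta}u_i^{(\beta)}$. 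Block-multiplying the definitions of $\pi^{(j)}$ and $\Delta_i^{(j)}$ gives the intertwining identities $M_i\binom{I_r}{\Theta}=\binom{I_r}{\Theta}\check M_i$ and $N_i\binom{I_r}{\Theta}=\binom{I_r}{\Theta}\check N_i$; since $\binom{I_r}{\Theta}$ has full column rank, \eqref{eq:main1} is equivalent, for $i\notin\{\ell-1,\ell,\ell+1\}$, to the $r$-dimensional recursion $u_i^{(\beta)}=\check M_i u_{i+1}^{(\beta)}+\check N_i u_{i-1}^{(\beta)}$, which under Hypothesis \ref{hyp:inv} takes the transfer-matrix form $\binom{u_{i+1}^{(\beta)}}{u_i^{(\beta)}}=\check A_i\binom{u_i^{(\beta)}}{u_{i-1}^{(\beta)}}$ forward in time and $\binom{u_{i-1}^{(\beta)}}{u_i^{(\beta)}}=\check B_i\binom{u_i^{(\beta)}}{u_{i+1}^{(\beta)}}$ backward in time.

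With the reduction available, I would extract the dimension inequalities by exhibiting explicit elements of $\check V_0$ and $\inw V_0$. Stochasticity of $Q$ gives $\check M_i\mathbf{1}_r+\check N_i\mathbf{1}_r=\mathbf{1}_r$, so $\check A_i\binom{\mathbf{1}_r}{\mathbf{1}_r}=\binom{\mathbf{1}_r}{\mathbf{1}_r}$ and analogously for its inverse; this is an invariant direction of Lyapunov exponent $0$ both forward and backward in time, which yields the strict inequalities $\check d_0>\check d_{0-}$ and $\inw d_0>\inw d_{0-}$. Next, the $m$ reduced probability sequences $(u_i^{(\beta)})_{i\ge\ell+1}$ have entries in $[0,1]$, so each initial vector $\binom{u_{\ell+2}^{(\beta)}}{u_{\ell+1}^{(\beta)}}$ sits inside $\check V_0$. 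Tracking the inhomogeneous boundary term $N_{\ell+1}e^{(\beta)}=q_{\ell+1}^{(\beta)}\binom{I_r}{\Theta}\pi e^{(\beta)}$ and its counterpart at $i=\ell-1$, the full row rank of $\pi$ makes these $m$ initial vectors span an $r$-dimensional subspace of $\check V_0$, giving $\check d_0\ge r$; the time-reversed argument via $\check B_i$ yields $\inw d_0\ge r$.

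With these structural facts in place, the four numbered assertions are obtained by transcribing the proof of Theorem \ref{thm:oseledecfull} in the reduced setting, with the substitutions $A_i\leadsto\check A_i$, $2m\leadsto 2r$, and critical threshold $m\leadsto r$. In parts (1) and (3), the equality $\check d_0=r$ (resp.\ $\inw d_0=r$) forces the $r$ bounded reduced solutions to have Lyapunov exponent exactly $0$; combined with Propositions \ref{prop:time3} and \ref{prop:time4} this gives $\gamma_+=0$ (resp.\ $\gamma_-=0$) and $\|f_{i\ell}\|=1$ on the relevant half-line. In parts (2) and (4), the inequality $\check d_{0-}\ge r$ (resp.\ $\inw d_{0-}\ge r$) supplies an $r$-dimensional subspace of strictly decaying solutions in which the bounded probability trajectories must live, forcing exponential decay at rate $\gamma_+<0$ (resp.\ $\gamma_-<0$) and $\lim_{i\to\pm\infty}P^e_{\alpha i}(\tau_\ell<\infty)=0$.

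The delicate point that I expect to be the main obstacle is the lower bound $\check d_0\ge r$ and its time-reversed analogue. One must confirm that the reduced initial vectors produced by the $m$ basis states $e^{(\beta)}$ actually span a full $r$-dimensional subspace of $\check V_0$, rather than collapse into a strictly smaller one; this combines the full row rank of $\pi$, the invertibility built into Hypothesis \ref{hyp:inv}, and the precise form of the inhomogeneous boundary terms $N_{\ell+1}e^{(\beta)}$ and $M_{\ell-1}e^{(\beta)}$. Once that is settled, the rest is a clean transcription of the Lyapunov/Oseledec argument already used to prove Theorem \ref{thm:oseledecfull}.
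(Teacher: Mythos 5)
Your reduction to the $2r$-dimensional transfer matrices $\check A_i$ is exactly the paper's: the intertwining $M_i\left(\begin{smallmatrix}I\\ \Theta\end{smallmatrix}\right)=\left(\begin{smallmatrix}I\\ \Theta\end{smallmatrix}\right)\check M_i$ is the content of \eqref{eq:prem1}--\eqref{eq:check_f2}, and the invariant direction $\check A_i\un=\un$ yielding $\check d_0>\check d_{0-}$ appears verbatim. The step you explicitly leave open --- that the bounded reduced solutions contribute at least $r$ dimensions to $\check V_0$ --- is, however, precisely the new content of this theorem relative to Theorem \ref{thm:oseledecfull}, and as written you do not prove it: you only state that you ``expect'' the $m$ vectors $\bigl(u^{(\beta)}_{\ell+2},u^{(\beta)}_{\ell+1}\bigr)$ not to collapse into a subspace of dimension below $r$. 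The paper sidesteps this difficulty by a different device: rather than working with the $m$ original hitting-probability sequences, it constructs $r$ auxiliary solutions $\check h^{(\beta)}$ of the homogeneous reduced recursion whose boundary values at the site $\ell$ itself are the standard basis vectors $e^{(\beta)}_r$ of $\dR^r$ (obtained as linear combinations of the $\tilde f^{(\gamma)}_{\cdot\,\ell}$, hence still bounded), so that the Oseledec vectors $\bigl(\check h^{(\beta)}_{\ell+1,\ell},\,e^{(\beta)}_r\bigr)$ are linearly independent by inspection of their bottom blocks. The point is that the boundary data at $\ell$ must first be pushed into the range of $\left(\begin{smallmatrix}I\\ \Theta\end{smallmatrix}\right)$ before it can serve as the bottom half of a $2r$-dimensional Oseledec vector; your proposal never addresses this, which is why you are forced to start at $\ell+1$ and lose control of the rank.

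Your route can nonetheless be completed by one short argument that you gesture at but should write out. If $\sum_\beta c_\beta u^{(\beta)}_{\ell+1}=0$ and $\sum_\beta c_\beta u^{(\beta)}_{\ell+2}=0$, then the boundary relation $u^{(\beta)}_{\ell+1}=\check M_{\ell+1}u^{(\beta)}_{\ell+2}+\pi(I-\Delta_{\ell+1})e^{(\beta)}$ forces $\pi(I-\Delta_{\ell+1})c=0$; since $I-\Delta_{\ell+1}$ is invertible and $\pi$ has full row rank $r$, the space of such relations has dimension at most $m-r$, so the span of the $m$ initial vectors has dimension at least $r$. The mirror argument with $\pi\Delta_{\ell-1}$ gives $\inw d_0\ge r$. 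With that inserted, the rest of your proposal --- transcribing the martingale and Oseledec arguments of Theorem \ref{thm:oseledecfull} with $m$ replaced by $r$ --- is exactly what the paper does, which likewise concludes by saying one proceeds as in that earlier proof with minor changes.
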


\begin{remark} Theorem \ref{thm:oseledecgen} implies that one cannot have at the same time $\check d_{0-} \ge  r$ and $ \inw d_{0-}\ge r$.
Hence, if $\inw d_{0} = r$  and $\check d_{0-} \ge r$, $\gamma_- = 0 $, $\gamma_+<0$,  and it follows from Lemma \ref{lem:main0}  that
$$
P^e_{\alpha i}\left(\disp\lim_{n\to\infty} X_n = +\infty
\right) =1, \; e  \mbox{ a.s.}
$$
for every $(\alpha,i)\in \{1,\ldots,m\}\times \dZ$.
On the other hand, if $\check d_0=r$ and $\inw d_{0-}\ge r$, then  $\gamma_+ = 0$, $\gamma_-<0$,   and
$$
P^e_{\alpha i}\left(\disp\lim_{n\to\infty} X_n = -\infty
\right) =1, \; e  \mbox{ a.s.}
$$
for every $(\alpha,i)\in \{1,\ldots,m\}\times \dZ$.

\end{remark}

These results are summarized in the following corollary if the mapping $T$ is invertible.

\begin{corollary}\label{cor:ruelle2}
Suppose that the mapping $e \mapsto Te$ is invertible.  If $\check d_0 = r$, then
$P^e_{\alpha i}\left(\disp\lim_{n\to\infty}
X_n = -\infty\right) =1$, $e$ a.s.,  for every $(\alpha,i)\in
\{1,\ldots,m\}\times \dZ$.

Also, if $\check d_{0-} = r$,  then   $P^e_{\alpha
i}\left(\disp\lim_{n\to\infty} X_n = +\infty\right) =1$,  $e$ a.s., for every
$(\alpha,i)\in \{1,\ldots,m\}\times \dZ$.
\end{corollary}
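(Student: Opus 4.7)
The plan is to mirror, at the level of the $2r$-dimensional cocycle $(\check A_n)$, the derivation of Corollary \ref{cor:ruelle} from Theorem \ref{thm:oseledecfull} via Remark \ref{rem:ruelle0}. Theorem \ref{thm:oseledecgen} already does almost all the work; what remains is to turn a one-sided hypothesis on the forward dimensions $\check d_0$ or $\check d_{0-}$ into a simultaneous hypothesis on the backward dimensions $\inw d_0$ or $\inw d_{0-}$, and this is where invertibility of $T$ enters.

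The central step will be an application of \citet[Theorem 3.1]{Ruelle:1979} to the invertible skew-product generated by $T$. I would invoke it to conclude that the Lyapunov spectrum of the inverse cocycle $(\check A_n\cdots\check A_1)^{-1}$ is the negative of that of $\check A_n\cdots\check A_1$, counted with multiplicity. Since both cocycles act on $\mathbb{R}^{2r}$, counting exponents by sign yields the dimension identities
\begin{equation*}
\inw d_{0-}=2r-\check d_0,\qquad \inw d_0=2r-\check d_{0-}.
\end{equation*}
The integrability hypotheses already required for Theorem \ref{thm:oseledecgen} are precisely those that Ruelle's theorem needs.

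With these identities in hand, the rest is bookkeeping. If $\check d_0=r$, then $\inw d_{0-}=r$, so parts (1) and (4) of Theorem \ref{thm:oseledecgen} both fire: the first yields $\gamma_+=0$ together with $P^e_{\alpha i}(\tau_\ell<\infty)=1$ for every $i>\ell$, namely \eqref{eq:condthm-1}; the second yields $\gamma_-<0$, which via Proposition \ref{prop:time4} forces $\|f_{\ell-n,\ell}(e)\|<1$ for all sufficiently large $n$, and then Proposition \ref{prop:time3} propagates this to $\|f_{i\ell}(e)\|<1$ for every $i<\ell$, namely \eqref{eq:condthem-2}. Lemma \ref{lem:main0}(ii) then delivers $P^e_{\alpha i}(\lim_n X_n=-\infty)=1$ almost surely in $e$, for every $(\alpha,i)$. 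The case $\check d_{0-}=r$ is handled symmetrically: now $\inw d_0=r$, so parts (2) and (3) of Theorem \ref{thm:oseledecgen} apply, and the same reasoning through Lemma \ref{lem:main0}(i) gives $P^e_{\alpha i}(\lim_n X_n=+\infty)=1$.

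The only delicate point I anticipate is the justification of the Ruelle duality identity in the present setting, since that theorem presupposes an invertible underlying base dynamics together with an integrable log-norm for the inverse cocycle. Both conditions are in force here (the first by hypothesis on $T$, the second through the assumed integrability of $\log^+\|\check A_1^{-1}\|$), so the application should be routine; nevertheless, this is the sole step where genuine care is required. Everything downstream is a mechanical combination of Theorem \ref{thm:oseledecgen}, Propositions \ref{prop:time3}--\ref{prop:time4}, and Lemma \ref{lem:main0}.
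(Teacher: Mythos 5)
Your proposal is correct and follows essentially the same route as the paper: the paper likewise obtains Corollary \ref{cor:ruelle2} by applying the Ruelle duality $\inw d_{0-}=2r-\check d_0$, $\inw d_0=2r-\check d_{0-}$ (stated explicitly only for the full-rank cocycle, but valid verbatim for $\check A_n$ under Hypothesis \ref{hyp:inv} and the integrability of $\log^+\|\check A_1^{-1}\|$) and then combining the resulting pairs of conclusions from Theorem \ref{thm:oseledecgen} with Propositions \ref{prop:time3}--\ref{prop:time4} and Lemma \ref{lem:main0}, exactly as in the remark preceding the corollary. Your bookkeeping of which parts of Theorem \ref{thm:oseledecgen} fire in each case, and of the conditions \eqref{eq:condthm-1}, \eqref{eq:condthem-2}, \eqref{eq:condthm+1}, \eqref{eq:condthm+2}, is accurate.
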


 Using our methodology when $r=1$, one recovers Theorem \ref{thm:alili} due to \citet{Alili:1999}.

\begin{corollary}\label{cor:alili1}
Suppose $r=1$ and set $u= E\left(\log{\sigma_0}\right)$. Then $u<0$ if and only if $\check d_{0-} =1$;
$u>0$   if and only if $\inw d_{0-} =1$; $u=0$ if and only if $\check d_{0-} =\inw d_{0-}=0$.
\end{corollary}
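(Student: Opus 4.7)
The plan is to specialize the apparatus of Section \ref{ssec:gen} to $r=1$ and then read off the Lyapunov structure of the resulting $2\times 2$ cocycle. With $r=1$ the row $\pi\in \dR^{1\times m}$ is a probability vector and $\Theta = \mathbf{1}_{m-1}$. A direct calculation then gives $\check M_i = \sum_{\beta=1}^m \pi_\beta p_i^{(\beta)} = p_i$ and $\check N_i = q_i$, so Hypothesis \ref{hyp:inv} is automatic, $\check\sigma_i = \sigma_i = q_i/p_i$, and the cocycle matrices reduce to
$$
\check A_i \;=\; \left(\begin{array}{cc} 1/p_i & -\sigma_i \\ 1 & 0 \end{array}\right), \qquad \det \check A_i = \sigma_i.
$$
Two structural facts drive everything: $\check A_i(1,1)^\top = (1,1)^\top$, so the product cocycle $\check A_n\cdots\check A_1$ admits $0$ as a Lyapunov exponent; and applying the multiplicative ergodic theorem to the determinant gives $\lambda_1+\lambda_2 = E[\log\sigma_0]=u$. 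The two Lyapunov exponents of the cocycle are therefore exactly $0$ and $u$.

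Since the ambient space now has dimension $2$, the bounds $\check d_0\ge 1$, $\check d_0 > \check d_{0-}$, $\inw d_0 \ge 1$, $\inw d_0 > \inw d_{0-}$ from Theorem \ref{thm:oseledecgen} force $\check d_{0-},\inw d_{0-}\in\{0,1\}$. Parts (2) and (4) of that theorem thus simplify to $\check d_{0-}=1 \iff \gamma_+<0$ and $\inw d_{0-}=1 \iff \gamma_-<0$.

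Because $Q$ has rank $1$ when $r=1$, the chain $(G_n,X_n)$ is irreducible and the simple random walk picture of Section \ref{ssec:caseind} applies; Corollary \ref{cor:ind} then yields the trichotomy $X_n\to+\infty$, $X_n\to-\infty$, or oscillation according to the sign of $u$. Matching this with Theorem \ref{thm:main} identifies $u<0$ with $(\gamma_+<0,\gamma_-=0)$, $u>0$ with $(\gamma_+=0,\gamma_-<0)$, and $u=0$ with $\gamma_+=\gamma_-=0$. Chaining these equivalences with those of the previous paragraph produces the three claimed statements.

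The main technical obstacle is that Theorem \ref{thm:oseledecgen} is formulated for $1<r<m$ and implicitly requires the integrability of $\log^+\|\check A_1\|$ and $\log^+\|\check A_1^{-1}\|$, which at $r=1$ amount to $E[\log^+(1/p_0)]<\infty$ and $E[\log^+(1/q_0)]<\infty$. Once these are assumed (or verified from context), the proof of Theorem \ref{thm:oseledecgen} transfers verbatim; as a safeguard, the direct Lyapunov analysis of the $2\times 2$ matrix $\check A_i$ outlined above, which uses only the invariant vector $(1,1)^\top$ and the determinantal identity $\det \check A_i = \sigma_i$, provides an elementary independent derivation of the same equivalences and thus recovers Theorem \ref{thm:alili} from our framework.
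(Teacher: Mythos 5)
Your proof is correct, but its primary route differs from the paper's. The paper gives a self-contained computation: it writes $\check A_n\cdots\check A_1$ explicitly in terms of $U_n=\sigma_1+\sigma_1\sigma_2+\cdots+\sigma_1\cdots\sigma_n$, checks that $n^{-1}\log U_n\to\max(0,u)$ and $n^{-1}\log\{(1+U_n)/s_n\}\to\max(0,-u)$ with $s_n=\sigma_1\cdots\sigma_n$, and reads off all four dimensions directly from Oseledec's theorem by exhibiting test vectors such as $(0,1)^\top$ and $(U_\infty,1+U_\infty)^\top$. You instead chain Theorem \ref{thm:oseledecgen}(2),(4) with Corollary \ref{cor:ind} and the mutually exclusive trichotomy of Theorem \ref{thm:main}; this is logically sound (a rank-one $Q$ has strictly positive identical rows, so $(G_n,X_n)$ is irreducible and Theorem \ref{thm:main} applies, and the mutual exclusivity of the three behaviors lets you invert the implications), but it obtains the result by invoking the full hitting-time machinery rather than exploiting the $2\times 2$ structure. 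Your fallback argument --- the invariant vector $(1,1)^\top$ forces $0$ to be a Lyapunov exponent while $\det\check A_i=\sigma_i$ forces $\lambda_1+\lambda_2=u$, so the exponents are $\{0,u\}$ for the forward products and $\{0,-u\}$ for the inverse ones --- is closer in spirit to the paper's proof and is in fact cleaner, since the determinant identity replaces the explicit growth estimate on $U_n$; to make it airtight you should justify that $\lambda_1+\lambda_2=\lim_n n^{-1}\log\left|\det(\check A_n\cdots\check A_1)\right|$ (it follows from part (d) of Theorem \ref{thm:oseledec} combined with the ergodic theorem) and keep the integrability hypotheses on $\log^+\|\check A_1\|$ and $\log^+\|\check A_1^{-1}\|$, which you correctly flag and which both routes require.
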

\begin{proof}
Note that is this case, $\check A_n\cdots \check A_1 = \left(\begin{array}{cc}1+U_n & -U_n\\
1+U_{n-1}& -U_{n-1}\end{array}\right)$, where $U_n = \sigma_1+\sigma_1\sigma_2+\ldots+\sigma_1 \cdots \sigma_n$.

Since $A_n \left(\begin{array}{c}1\\
1\end{array}\right) = \left(\begin{array}{c}1\\
1\end{array}\right)$,  $\check A_n\cdots \check A_1 \left(\begin{array}{c}0\\
1\end{array}\right) = \left(\begin{array}{c}U_n\\
U_{n-1}\end{array}\right)$, and \\
$\left(\check A_n\cdots \check A_1\right)^{-1} \left(\begin{array}{c}0\\
1\end{array}\right) = \left(\begin{array}{c}U_n/s_n\\
(1+U_{n})/s_n\end{array}\right)$, where $s_n = \sigma_1 \cdots \sigma_n$.
Next, it is easy to check that $\frac{1}{n}\log{U_n} \to \max(0,u)$ and $\frac{1}{n}\log\left\{(1+U_n)/s_n\right\} \to \max(0,-u)$, as $n\to\infty$.
 It then follows from Oseledec's Theorem (Theorem \ref{thm:oseledec}) that
$u<0$ if and only if $\check d_{0-} =1$, $\check d_0 = 2$, $\inw d_{0-}=0$ and $\inw d_{0}=1$.  Just take $v = (U_\infty, 1+U_\infty)^\top$ to get $\check \lambda_1 = u<0=\lambda_2$. Here $U_\infty = \left(1-e^u\right)^{-1}$. Also take $v = (0 ,1)^\top$ to obtain $\inw \lambda_2 = -u>0 = \inw \lambda_1$.
Similarly, $u>0$ if and only if $\check d_{0-} =0$, $\check d_0 = 1$, $\inw d_{0-}=1$ and $\inw d_{0}=2$. Finally, $u=0$ if and only if $\check d_{0-} =0=\inw d_{0-}$, and $\check d_0 = 2=\inw d_{0}$, since in this case, taking $v = (0,1)^\top$, one gets $\check \lambda_1 = \check \lambda_2 = \inw \lambda_1 = \inw \lambda_2 =0$.
\end{proof}

\begin{example}\label{ex:weird}
 Suppose
 $Q = \frac{1}{24}\left( \begin{array}{ccc}  8 &  8 & 8\\ 6 & 6 & 12\\ 7 & 7 & 10 \end{array}\right)$. Then $\Theta = \left( \frac{1}{2} \quad  \frac{1}{2}\right)$.

In this  example, $
\check M_i = \frac{1}{24}\left(\begin{array}{cc}  8 p^{(1)}_i+4  p^{(3)}_i &   8 p^{(2)}_i+4  p^{(3)}_i \\
6 p^{(1)}_i+ 6  p^{(3)}_i & 6 p^{(2)}_i+6  p^{(3)}_i \end{array}\right)$, and\\
 $\check N_i = \frac{1}{24}\left(\begin{array}{cc}  8 q^{(1)}_i+4  q^{(3)}_i &   8 q^{(2)}_i+4  q^{(3)}_i \\
6 q^{(1)}_i+ 6 q^{(3)}_i & 6 q^{(2)}_i+6 q^{(3)}_i \end{array}\right)$.
It follows that
$
\det\left(\check M_i\right) = \frac{p_i^{(3)}}{24}\left(p_i^{(1)}-p_i^{(2)}\right)$. Hence $\check M_i$ is invertible if and only if $p_i^{(1)}\neq p_i^{(2)}$ a.s.

\end{example}

\begin{remark} What happens if Hypothesis \ref{hyp:inv} is not met? In this case, one proceeds almost as before, reducing the dimension of $\tilde M_i$ instead. For example, taking $p_i^{(1)}=p_i^{(2)}$ in Example \ref{ex:weird}, i.e., Games 1 and 2 are the same, $
\check M_i = \left(\begin{array}{cc}  \alpha_i &   \alpha_i \\
\beta_i & \beta_i \end{array}\right)$,
 $\check N_i = \left(\begin{array}{cc}  \frac{1}{2}-\alpha_i &   \frac{1}{2}-\alpha_i \\
\frac{1}{2}-\beta_i & \frac{1}{2}-\beta_i \end{array}\right)$, where
$\alpha_i = \frac{1}{24}\left( 8 p^{(1)}_i+4  p^{(3)}_i\right)$, and $\beta_i  = \frac{1}{24}\left(
6 p^{(1)}_i+ 6  p^{(3)}_i \right)$. Hence, $p_i = \alpha_i+\beta_i  = \frac{1}{24}\left( 14 p^{(1)}_i+ 10  p^{(3)}_i\right) < 1$.
Thus Hypothesis \ref{hyp:inv} does not hold. However, setting $\check f_{i\ell} = \left(\begin{array}{c}  P_{1i}(\tilde \tau_\ell<\infty)  \\
P_{2i}(\tilde \tau_\ell<\infty)\end{array}\right)$, one has  $ \check f_{\ell\ell} = {\bf 1}_2$ and
\begin{equation}\label{eq:exweird}
\check f_{i\ell} = \check M_i \check f_{i+1,\ell}+\check N_i \check f_{i-1,\ell}, \qquad i\neq \ell.
\end{equation}
Also, $g_{i\ell} = P_{3i}(\tilde \tau_\ell<\infty) = \frac{1}{2}{\bf 1}_2 ^\top \check f_{i\ell}$.
It then follows from \eqref{eq:exweird} that
\begin{equation}\label{eq:exweird1}
g_{i\ell} = p_i g_{i+1,\ell}+(1-p_i) g_{i-1,\ell}, \qquad i\neq \ell,\quad g_{\ell}=1,
\end{equation}
and $\check f_{i\ell} = \left(\begin{array}{c}  2\alpha_i g_{i+1,\ell}+ (1-2\alpha_i)g_{i-1,\ell}\\
2\beta_i g_{i+1,\ell}+ (1-2\beta_i)g_{i-1,\ell} \end{array}\right)$.  Hence we are back to the first case considered, i.e., the rank $1$ case.
\end{remark}
\appendix

\section{Oseledec's multiplicative ergodic
theorem}\label{app:oseledec}

The following statement of the celebrated Oseledec's Theorem is taken from \citet{Walters:1982}.

\begin{theorem}\label{thm:oseledec}
Let $A_1,A_2,\ldots$ be a stationary ergodic sequence of $d\times d$
matrices such that $E \left\{\log^+ \left(\|A_1\|\right)\right\}
<\infty$. Then there exists constants $-\infty \le \lambda_d \le
\lambda_{d-1} \le \cdots \le \lambda_1 <\infty$ with the following
properties:
\begin{enumerate}
\item[(a)] With probability 1, the random sets
$$
{\cV}_q = {\cV}_q(e) = \left\{v\in \mathbb{R}^d:
\lim_{n\to\infty} n^{-1}\log\left(\|A_n(e) \cdots A_1(e)
v\| \le \lambda_q \right)\right\}
$$
are subspaces. The map $e \mapsto {\cV}_q(e)$ is
measurable and if $T$ is the measure preserving map for
which $A_i(Te) = A_{i+1}(e)$, then ${\cV}_q(Te)
=
A_1(e){\cV}_q(e)$.\\

\item[(b)] ${\rm Dim}({\cV}_q) = {\rm card}\{i: \lambda_i \le
\lambda_q\}$.\\

\item[(c)] Set $ {\cV}_{d+1} = \{0\}$ and let $i_1 =1 < i_2 \cdots < i_{p+1}=
d+1$ be the unique indices at which $\lambda_i$ jumps, i.e.,
$\lambda_1 = \lambda_2 = \cdots = \lambda_{i_2-1} > \lambda_{i_2}
\cdots$. Then for $v\in {\cV}_{i_{s-1}}\setminus {\cV}_{i_s}$, one
has
$$
\lim_{n\to\infty} n^{-1}\log\left(\|A_n \cdots A_1v\|\right) =
\lambda_{i_{s-1}}, \quad 2\le s\le p+1.
$$
\item[(d)] The sequence of matrices $\left(A_1^\top \cdots A_n^\top
A_n \cdots A_1\right)^{1/(2n)}$ converges almost surely to a limit
matrix $B$ with eigenvalues $\mu_1=e^{\lambda_1}, \ldots, \mu_d =
e^{\lambda_d}$. The orthogonal complement of ${\cV}_{i_s}$ in
${\cV}_{i_{s-1}}$ is the eigenspace of $B$ corresponding to
$\mu_{i_{s-1}}$.\\

\item [(e)] If $\limsup_{n\to\infty} n^{-1}E\left\{ \log\left(\|A_n \cdots
A_1\|\right)\right\} >0$ and $\det(A_1)=1$ with probability 1, then
$\lambda_d<0<\lambda_1$ and ${\cV}_d$, the subspace corresponding to
$\lambda_d$ is a proper  nonempty subspace of $\mathbb{R}^d$.
\end{enumerate}

\end{theorem}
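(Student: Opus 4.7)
The plan is to follow the classical Raghunathan--Ruelle route: extract the Lyapunov spectrum via Kingman's subadditive ergodic theorem applied to exterior powers, construct the filtration from the eigenspaces of a limiting symmetric matrix $B$, and check equivariance through the shift relation $A_i(Te)=A_{i+1}(e)$.

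First, for each $q=1,\ldots,d$, consider the $q$-th exterior power $\Lambda^q(A_n\cdots A_1)$. Submultiplicativity of the operator norm makes $\log\|\Lambda^q(A_n\cdots A_1)\|$ a subadditive stationary sequence; integrability of $\log^+\|A_1\|$ passes to $\log^+\|\Lambda^q A_1\|$ since $\|\Lambda^q A\|\le\|A\|^q$. Kingman's theorem then yields the almost sure limit $\Lambda_q:=\lim_n \tfrac1n\log\|\Lambda^q(A_n\cdots A_1)\|$, and setting $\lambda_q:=\Lambda_q-\Lambda_{q-1}$ (with $\Lambda_0:=0$) produces the nonincreasing constants $\lambda_1\ge\cdots\ge\lambda_d$ in the statement.

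Second, to produce the subspaces, I would analyse the symmetric positive semi-definite matrices $B_n:=(A_1^\top\cdots A_n^\top A_n\cdots A_1)^{1/(2n)}$. Their eigenvalues are $\sigma_i(n)^{1/n}$, where $\sigma_1(n)\ge\cdots\ge\sigma_d(n)$ are the singular values of $A_n\cdots A_1$; since $\sigma_1(n)\cdots\sigma_q(n)=\|\Lambda^q(A_n\cdots A_1)\|$ and Step~1 controls these products, one deduces $\sigma_i(n)^{1/n}\to e^{\lambda_i}$ almost surely. The more delicate point is that the eigenspaces themselves --- grouped across repeated $\lambda_i$'s --- must converge. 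This uses the exponential gap between distinct exponents together with a perturbation argument controlling principal angles, and is the essential technical obstacle. Once it is established, define $B$ as the almost sure limit of $B_n$ and $\cV_q$ as the span of the eigenspaces of $B$ corresponding to eigenvalues $\le e^{\lambda_q}$; parts (a), (b), (d) and the asymptotic rate in (c) then all follow.

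Third, the equivariance $\cV_q(Te)=A_1(e)\cV_q(e)$ is a direct consequence of the identity $A_n(e)\cdots A_1(e)=A_{n-1}(Te)\cdots A_1(Te)\cdot A_1(e)$, which shows that the exponential growth rate of $v$ under the original cocycle equals that of $A_1(e)v$ under the shifted one. Measurability of $e\mapsto\cV_q(e)$ is inherited from that of $B$. For (e): $\det(A_1)=1$ forces $\sum_i\lambda_i=0$; combined with $\lambda_1>0$ coming from the $\limsup$ hypothesis, this gives $\lambda_d<0$, and $\cV_d$ is a proper nonempty subspace because $\lambda_1>\lambda_d$ guarantees at least one direction of strictly faster growth. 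The main obstacle throughout is the convergence of eigenspaces in Step~2; the rest is essentially bookkeeping. Since the theorem is classical, the paper simply cites \citet{Walters:1982} for the full argument.
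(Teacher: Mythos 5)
The paper does not prove this statement at all: it is quoted verbatim as a classical external result, with the proof delegated entirely to the citation of \citet{Walters:1982}. So there is no ``paper proof'' to compare against, and your closing observation that the authors simply cite the literature is exactly right.

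On its own merits, your sketch follows the standard Raghunathan--Ruelle route correctly in outline: Kingman's subadditive ergodic theorem applied to $\log\|\Lambda^q(A_n\cdots A_1)\|$ does produce the constants $\Lambda_q$, and $\lambda_q=\Lambda_q-\Lambda_{q-1}$ is nonincreasing because $\sigma_1(n)\cdots\sigma_{q+1}(n)\cdot\sigma_1(n)\cdots\sigma_{q-1}(n)\le\left(\sigma_1(n)\cdots\sigma_q(n)\right)^2$ holds for every $n$ and passes to the limit. Your treatment of equivariance and of part (e) is also sound (the $\limsup$ hypothesis forces $\lambda_1>0$ via Kingman, and $\det A_1=1$ forces $\sum_i\lambda_i=0$, hence $\lambda_d<0$ and $0<\dim\cV_d<d$). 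The one genuine gap is the one you name yourself: the almost sure convergence of the matrices $B_n=(A_1^\top\cdots A_n^\top A_n\cdots A_1)^{1/(2n)}$, including convergence of the grouped eigenspaces, is the entire technical content of the theorem, and ``a perturbation argument controlling principal angles'' is a placeholder rather than an argument. Quantitatively, one must show that the angle between the span of the bottom singular directions of $A_n\cdots A_1$ and that of $A_{n+1}\cdots A_1$ decays geometrically (at rate governed by the gap $\lambda_{i_{s-1}}-\lambda_{i_s}$), which is where Raghunathan's estimate enters; without it, parts (a)--(d) are not established. Since the theorem is classical and the paper itself only cites it, this is an acceptable level of detail for the present context, but be aware that the step you deferred is not bookkeeping --- it is the theorem.
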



\section{Proofs of the main results}

\subsection{Proof of Lemma \ref{lem:main}}\label{pf:mainlemma}

\begin{proof}\label{pf:mainlem}  Set ${\mathcal{A}}_\ell = \{X_n
\le \ell \; i.o\}$. First,
\begin{eqnarray*}
P^e_{\alpha i}({\mathcal{A}}_\ell) &=& \sum_{\beta=1}^m
P^e_{\alpha i}(\{\tau_\ell <\infty,G_{\tau_\ell}=\beta\}\cap
{\mathcal{A}}_\ell)+P^e_{\alpha i}(\{\tau_\ell
=\infty\}\cap {\mathcal{A}}_\ell)\\
&=& \sum_{\beta=1}^m  P^e_{\alpha i}(\tau_\ell <\infty, G_{\tau_\ell} =
\beta)P^e_{\beta \ell}({\mathcal{A}}_\ell)+P^e_{\alpha
i}(\{\tau_\ell =\infty\}\cap {\mathcal{A}}_\ell).
\end{eqnarray*}
As a result, for  $i<\ell$, one obtains
\begin{equation}\label{eq:i<l}
P^e_{\alpha i}({\mathcal{A}}_\ell) = \sum_{\beta=1}^m
P^e_{\alpha i}(\tau_\ell <\infty, G_{\tau_\ell} = \beta)P^e_{\beta
\ell}({\mathcal{A}}_\ell)+ P^e_{\alpha i}(\tau_\ell =\infty),
\end{equation}
while if $i>\ell$, then
\begin{equation}\label{eq:i>l}
P^e_{\alpha i}({\mathcal{A}}_\ell) = \sum_{\beta=1}^m
P^e_{\alpha i}(\tau_\ell <\infty, G_{\tau_\ell} = \beta)P^e_{\beta
\ell}({\mathcal{A}}_\ell).
\end{equation}

First, suppose that $P^e_{\alpha i}(\mathcal{A}_\ell)=0$ for
any $(\alpha,i)\in \{1,\ldots,m\}\times \dZ$. Then, according to
\eqref{eq:i<l},
 $P^e_{\alpha i}(\tau_\ell <\infty)=1$ for any $\alpha\in\{1,\ldots,m\}$ and any $i<\ell$.
Next, if $i>\ell$, then $P^e_{\alpha i}(\mathcal{A}_\ell)=0$
implies that $P^e_{\alpha i}(\tau_\ell<\infty)<1$. Hence
\eqref{eq:condlem} holds
true.\\

 Suppose now that \eqref{eq:condlem} holds
true. Then combining \eqref{eq:i<l} and \eqref{eq:i>l}, one obtains
\begin{equation}\label{eq:eq1}
P^e_{\alpha i}({\mathcal{A}}_\ell) =\sum_{\beta=1}^m
P^e_{\alpha i}(\tau_\ell <\infty, G_{\tau_\ell} = \beta)P^e_{\beta
\ell}({\mathcal{A}}_\ell), \quad \alpha\in \{1,\ldots,m\}, i\neq
\ell.
\end{equation}
Therefore, to complete the proof, it suffices to show that
$P^e_{\alpha \ell}({\mathcal{A}}_\ell)=0$ for any $\alpha\in
\{1,\ldots,m\}$. To this end, note that using \eqref{eq:eq1}, one
gets
\begin{eqnarray} P^e_{\alpha \ell}({\mathcal{A}}_\ell) &=& \sum_{\beta=1}^m
P^e_{\alpha \ell}(\{G_1=\beta\} \cap {\mathcal{A}}_\ell) \nonumber\\
&=&\sum_{\beta=1}^m Q_{\alpha \beta}\left[ p_\ell^{(\beta)}
P^e_{\beta,\ell+1}({\mathcal{A}}_\ell)+ q_\ell^{(\beta)}
P^e_{\beta,\ell-1}({\mathcal{A}}_\ell)\right] \nonumber\\
&=&\sum_{\beta=1}^m \sum_{\gamma=1}^m Q_{\alpha \beta} p_\ell^{(\beta)} P^e_{\beta,\ell+1}(\tau_\ell <\infty, G_{\tau_\ell}=\gamma)
P^e_{\gamma\ell}({\mathcal{A}}_\ell) \nonumber\\
&& \qquad + \sum_{\beta=1}^m \sum_{\gamma=1}^m Q_{\alpha \beta}
q_\ell^{(\beta)} P^e_{\beta,\ell-1}(\tau_\ell <\infty,
G_{\tau_\ell}=\gamma) P^e_{\gamma\ell}({\mathcal{A}}_\ell) \nonumber\\
&=& \sum_{\gamma=1}^m P^e_{\alpha \ell}(\tau_\ell <\infty,
G_{\tau_\ell}=\gamma)
P^e_{\gamma\ell}({\mathcal{A}}_\ell),\label{eq:main11}
\end{eqnarray}
since
\begin{eqnarray}\label{eq:fll}
\qquad  P^e_{\alpha \ell}(\tau_\ell <\infty, G_{\tau_\ell} = \gamma) &=&
\sum_{\beta=1}^m Q_{\alpha\beta}  p_\ell^{(\beta)}
P^e_{\beta,\ell+1}(\tau_\ell <\infty,
G_{\tau_\ell} =\gamma) \\
&& +\sum_{\beta=1}^m Q_{\alpha\beta} q_\ell^{(\beta)}
P^e_{\beta,\ell-1}(\tau_\ell <\infty, G_{\tau_\ell}=\gamma).\nonumber
\end{eqnarray}
By hypothesis, $P^e_{\alpha, \ell+1}(\tau_\ell <\infty)<1 $ for
any $\alpha\in \{1,\ldots,m\}$, so by \eqref{eq:fll} it follows that
$ f^{e,*}_{ll} = \max_{1\le \alpha\le m}P^e_{\alpha
\ell}(\tau_\ell <\infty) < 1$. Further set $\mathcal{P}_\ell =
\max_{1\le \alpha\le m}P^e_{\alpha \ell}({\mathcal{A}}_\ell)$.

Then, from \eqref{eq:main11}, $
\mathcal{P}_\ell = \max_{1\le \alpha \le m} P^e_{\alpha
\ell}({\mathcal{A}}_\ell) \le f^{e,*}_{ll} \mathcal{P}_\ell$,
so $\mathcal{P}_\ell\left(1-f^{e,*}_{ll}\right)\le 0$. Since
$f^{e,*}_{ll}<1$, it follows that $\mathcal{P}_\ell=0$. Hence
the result.
\end{proof}

\subsection{Proof of Proposition \ref{prop:exit}}\label{app:pf_propexit}
\begin{proof}
Let $e$ be given.
First, \eqref{eq:sum} is obviously true for $\ell=1$. If
\eqref{eq:sum} is not true for some $\ell>1$, then for some $\alpha
\in \{1,\ldots,m\}$, and some $|i|<\ell$,
$$
P^e_{\alpha i}(|X_n|< \ell \mbox{ for all }n\ge 1 )>0.
$$
Next, for  $|i|<\ell$ and $\alpha \in \{1,\ldots,m\}$, set
$g_{\alpha i}(e) = P^e_{\alpha i}(|X_n|< \ell \mbox{ for all }n\ge
1 )$. It then follows that for any $\alpha \in \{1,\ldots,m\}$, and
$|i|<\ell-1$,
$$
g_{\alpha i} = \sum_{\beta=1}^m Q_{\alpha \beta} \left(p^{(\beta
)}_{i}g_{\beta, i+1} +q^{(\beta )}_{i}g_{\beta,
i-1}\right),
$$
while
$
g_{\alpha, \ell-1} = \sum_{\beta=1}^m Q_{\alpha \beta} q_{\ell-1}^{(\beta)}g_{\beta \ell-2}$,
and
$
g_{\alpha, -\ell+1} = \sum_{\beta=1}^m Q_{\alpha \beta} p_{-\ell+1}^{(\beta)} g_{\beta ,-\ell+2}$.
It follows that for every $e$, there is a sub-stochastic matrix $\tilde
P_{\alpha i,\beta j}(e)$ on $S=\{1,\ldots,m\}\times
\{-\ell+1,\ldots, \ell-1\}$, so that $g_{\alpha i} = \sum
_{(\beta,j)\in S} \tilde P_{\alpha i,\beta j} g_{\beta j}$,
for any $(\alpha,i)\in S$. Note that $\left(\tilde P
\mathbf{1}\right)_{\alpha i} = 1$ if $|i|<\ell-1$, while
$\left(\tilde P \mathbf{1}\right)_{\alpha,\ell-1} = \sum_{\beta=1}^m
Q_{\alpha \beta} q_{\ell-1}^{(\beta)} <1$, and $\left(\tilde P
\mathbf{1}\right)_{\alpha,-\ell+1} =\sum_{\beta=1}^m Q_{\alpha
\beta} p_{\ell+1}^{(\beta)}<1$. Next, $\left(\tilde P^2
\mathbf{1}\right)_{\alpha i}<1$ if $i \in
\{-\ell+1,-\ell+2,\ell-2,\ell-1\}$, and by induction, $\left(\tilde
P^k \mathbf{1}\right)_{\alpha i}<1$ if $i \in
\{-\ell+1,\ldots,\ell+k,\ell-k, \ldots,\ell-1\}$. Therefore, for any $e$, there
exists $c = c(e)\in (0,1)$ so that $\tilde P^{\ell} \mathbf{1} \le c
\mathbf{1}$. As a result, using \citet[Theorem
8.4]{Billingsley:1995}, one obtains that $g = \lim_{n\to\infty}
\tilde P^n \mathbf{1} = 0$,  contradicting the hypothesis that
$g_{\alpha i}>0$ for some $(\alpha,i)\in S$. Hence the result.
\end{proof}

%

\subsection{Proof of Proposition
\ref{prop:time3}}\label{pf:proptime3}
One only proves the proposition for $i<\ell$, the
proof of the case $i>\ell$ being similar.
If $\|f_{i\ell}(e)\|=1$ for $i=\ell-1$, then for some $\alpha\in \{1,\ldots,m\}$,
$P^e_{\alpha, \ell-1}(\tau_\ell <\infty)=1$. Hence,  for any
$\beta$ so that $Q_{\alpha\beta}>0$, one has
$P^e_{\beta,\ell-2}(\tau_\ell <\infty)=1$, according to Proposition \ref{prop:time}. Therefore $\|f_{\ell-2,\ell}(e)\|=1$. Next, if $\|f_{i\ell
}(e)\|=1$ for some $i<\ell-1$, then Proposition \ref{prop:time} implies that
$\|f_{i\pm 1,\ell}(e)\|=1$. This proves the first part of the
proposition. Suppose now that $P^e_{\alpha i}(\tau_\ell<\infty)=1$, $e$
a.s. Since $G_n$ is irreducible,  for a given
$\beta$, one can find $n\ge 1$ so that $Q^n_{\alpha\beta}>0$. As a
result, it follows from Proposition \ref{prop:time}  that
$P^e_{\beta, i-n} (\tau_\ell <\infty)=1$ a.s. Hence, using
stationarity, $P^e_{\beta, i} (\tau_{\ell+n} <\infty)=1$, $e$ a.s.,
entailing that $P^e_{\beta, i} (\tau_{\ell} <\infty)=1$, $e$ a.s.
 \qed

\subsection{Proof of Proposition
\ref{prop:time4}}\label{pf:proptime4}
For any $i<j <\ell$,
$$
\left(f_{i\ell}(e)\right)_\alpha = P^e_{\alpha i}(\tau_\ell
<\infty) = \sum_{\beta=1}^m P^e_{\alpha i}(\tau_j <\infty,
G_{\tau_j}=\beta)P^e_{\beta j}(\tau_\ell < \infty).
$$
As a result, for any $e$, $\|f_{i\ell}(e)\| \le \|f_{ij}(e)\|
\|f_{j\ell}(e)\|$, showing that the logarithm of the sequence is
subadditive. Then \eqref{eq:prodergo} follows easily. Next,
$\|f_{ij}(e)\|= \|f_{0,j-i}(T^i e)\|$.
This proves that $\|f_{\ell-k,\ell-k+1}\|$, $k\ge 1$, is a
stationary ergodic sequence, so by using the Subadditive Ergodic
Theorem \citep{Durrett:2010}, there exists a constant $ \gamma_-$ so
that for almost every $e \in E$,
$$
\lim_{n\to\infty}
\frac{1}{n}\log{\|f_{\ell-n,\ell}(e)\|}=\gamma_- \le
E\left\{\log{\|f_{0,1}\|}\right\},
$$
proving the first part.
Next, if $i>j>\ell$, then
$$
\left(f_{i\ell}(e)\right)_\alpha = P^e_{\alpha i}(\tau_\ell
<\infty) = \sum_{\beta=1}^m P^e_{\alpha i}(\tau_j <\infty,
G_{\tau_j}=\beta)P^e_{\beta j}(\tau_\ell < \infty),
$$
so again $\|f_{i\ell}\| \le \|f_{ij}\|
\|f_{j\ell}\|$. The rest of the proof follows along the same lines as the
previous case $i<j <\ell$.
 \qed

\subsection{Auxiliary results}\label{aux}
We next give the formula for the $k$-th visiting time of $X_n$ to a
state $\ell\in\dZ$. To this end, let $\tau_\ell^{(k)}$ denotes the time
of the k-th visit to $\ell$ and define $\cN_\ell$ as the number of visits to site $\ell$.

\begin{proposition}\label{prop:kthvisit}
For $\ell$ given, set $(\cU_\ell)_{\alpha\beta} =
\left( f^{(\beta)}_{\ell\ell}\right)_\alpha$ while recalling that,
for every $\alpha,\beta\in\{1,\ldots,m\}$, $i\in \dZ$,
$\left( f^{(\beta)}_{i\ell}\right)_\alpha(e)
=P^e_{\alpha i}(\tau_\ell<\infty,G_{\tau_\ell}=\beta)$. Then for any $k\ge 1$,
\begin{equation}\label{eq:kthvisit1}
P^e_{\alpha i}\left(\tau_\ell^{(k)} < \infty,
G_{\tau_\ell^{(k)}}=\beta\right) = \sum_{\gamma=1}^m
\left( f^{(\gamma)}_{i\ell}\right)_\alpha (e)( \cU_\ell^{k-1}(e))_{\gamma\beta}
\end{equation}
and
\begin{equation}\label{eq:kthvisit2}
P^e_{\alpha i}\left(\cN_\ell \ge k \right)=
P^e_{\alpha i}\left(\tau_\ell^{(k)} < \infty\right) =
\sum_{\gamma=1}^m \sum_{\beta=1}^m \left( f^{(\gamma)}_{i\ell}\right)_\alpha (e)
( \cU_\ell^{k-1}(e))_{\gamma\beta},
\end{equation}
with $ \cU^0 = I$.
\end{proposition}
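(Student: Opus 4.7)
The plan is to proceed by induction on $k$, exploiting the fact that, for a fixed environment $e$, the pair $(G_n,X_n)$ is a homogeneous Markov chain under $P^e$ on $\{1,\ldots,m\}\times\dZ$ with the transition matrix $P_{\alpha i,\beta j}(e)$ displayed in Subsection \ref{ssec:model}, so the strong Markov property is available at each $\tau_\ell^{(k)}$.

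For the base case $k=1$, we have $\tau_\ell^{(1)}=\tau_\ell$ and $\cU_\ell^0=I$, so the right-hand side of \eqref{eq:kthvisit1} reduces to $\sum_{\gamma=1}^m (f_{i\ell}^{(\gamma)})_\alpha\,\delta_{\gamma\beta}=(f_{i\ell}^{(\beta)})_\alpha$, which matches the definition of $(f_{i\ell}^{(\beta)})_\alpha$. For the inductive step, assume \eqref{eq:kthvisit1} holds at level $k-1$. On the event $\{\tau_\ell^{(k-1)}<\infty,\,G_{\tau_\ell^{(k-1)}}=\gamma\}$, the post-$\tau_\ell^{(k-1)}$ chain starts from $(\gamma,\ell)$, and the additional time needed to reach $\ell$ again is precisely the $\tau_\ell$ of the shifted Markov chain. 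The strong Markov property therefore yields
\begin{equation*}
P^e_{\alpha i}\!\left(\tau_\ell^{(k)}<\infty,\,G_{\tau_\ell^{(k)}}=\beta\right) = \sum_{\gamma=1}^m P^e_{\alpha i}\!\left(\tau_\ell^{(k-1)}<\infty,\,G_{\tau_\ell^{(k-1)}}=\gamma\right)(\cU_\ell)_{\gamma\beta},
\end{equation*}
since $(\cU_\ell)_{\gamma\beta}=(f_{\ell\ell}^{(\beta)})_\gamma=P^e_{\gamma\ell}(\tau_\ell<\infty,\,G_{\tau_\ell}=\beta)$ by definition. Substituting the induction hypothesis $P^e_{\alpha i}(\tau_\ell^{(k-1)}<\infty,\,G_{\tau_\ell^{(k-1)}}=\gamma)=\sum_{\delta=1}^m (f_{i\ell}^{(\delta)})_\alpha(\cU_\ell^{k-2})_{\delta\gamma}$ and collapsing the sum over $\gamma$ by matrix multiplication, one obtains $\sum_{\delta=1}^m (f_{i\ell}^{(\delta)})_\alpha(\cU_\ell^{k-1})_{\delta\beta}$, which is \eqref{eq:kthvisit1} at level $k$.

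For \eqref{eq:kthvisit2}, observe that $\{\cN_\ell\ge k\}=\{\tau_\ell^{(k)}<\infty\}$, and then sum \eqref{eq:kthvisit1} over $\beta\in\{1,\ldots,m\}$ to partition on the value of $G_{\tau_\ell^{(k)}}$.

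There is no substantial obstacle here; the only point requiring care is the appeal to the strong Markov property, which is justified because $\tau_\ell^{(k-1)}$ is a stopping time for the filtration generated by $(G_n,X_n)$ under the quenched law $P^e$, and because $\cU_\ell$ is exactly the matrix of one-step (in terms of visits to $\ell$) transition probabilities between the $G$-components at successive returns to $\ell$.
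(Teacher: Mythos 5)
Your proof is correct and follows essentially the same route as the paper: a strong Markov decomposition at $\tau_\ell^{(k-1)}$ over the value of $G_{\tau_\ell^{(k-1)}}$, followed by induction on $k$ and summation over $\beta$ for \eqref{eq:kthvisit2}. The only difference is cosmetic — you spell out the base case and the matrix-product bookkeeping that the paper leaves implicit.
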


\begin{proof}\label{pf:kthvisit}
Set $\tau_\ell^{(0)} = 0$. Then for any $\alpha,\beta\in\{1,\ldots,m\}$
and any $i\in \dZ$, one has
\begin{multline*}
P^e_{\alpha i}\left(\tau_\ell^{(k)} < \infty,
G_{\tau_\ell^{(k)}}=\beta\right) \\ = \sum_{\gamma=1}^m P^e_{\alpha i}\left(\tau_\ell^{(k)} < \infty, G_{\tau_\ell^{(k)}}=\beta, \tau_\ell^{(k-1)} < \infty,G_{\tau_\ell^{(k-1)}}=\gamma\right)\\
=\sum_{\gamma=1}^m P^e_{\alpha i}\left(\tau_\ell^{(k-1)} <
\infty,G_{\tau_\ell^{(k-1)}}=\gamma\right) P^e_{\gamma\ell}
\left(\tau_\ell < \infty, G_{\tau_\ell}=\beta\right).
\end{multline*}
The result then follows by induction on $k$.
\end{proof}

\subsection{Proof of Lemma \ref{lem:main0}}\label{app:pf-lemma2}
\begin{proof}
Set ${\mathcal{A}}_\ell = \{X_n
\le \ell \; i.o\}$.
First, $P^e_{\alpha i}(\lim_{n\to\infty} X_n = +\infty) =1$ for
every $(\alpha,i)\in \{1,\ldots,m\}\times \dZ$ if and only if for
every $(\alpha,i,\ell)\in \{1,\ldots,m\}\times \dZ^2$,
$P^e_{\alpha i}(\cA_\ell)=0$. By Lemma \ref{lem:main}, this is
equivalent to \eqref{eq:condthm+1} and \eqref{eq:condthm+2}. This
proves (i). Next, (ii) follows from (i) applied to $-X_n$. To prove
(iii), note that $P^e_{\alpha i}(\liminf_{n\to\infty} X_n =
-\infty) =1$ for every $(\alpha,i)\in \{1,\ldots,m\}\times \dZ$ if
and only if for every $(\alpha,i,\ell)\in \{1,\ldots,m\}\times
\dZ^2$, $P^e_{\alpha i}(\cA_\ell)=1$. The latter implies that
$P^e_{\alpha i}(\tau_\ell <\infty) = 1$ whenever $i > \ell$. Using this result with $-X_n$, one obtains that
$P^e_{\alpha i}(\limsup_{n\to\infty} X_n = +\infty) =1$ for
every $(\alpha,i)\in \{1,\ldots,m\}\times \dZ$ implies that for every
$(\alpha,i,\ell)\in \{1,\ldots,m\}\times \dZ^2$, $P^e_{\alpha
i}(\tau_\ell <\infty)=1$ whenever $i<\ell$.  Then \eqref{eq:fll} yields
that $P^e_{\alpha \ell}(\tau_\ell<\infty)=1$. (Irreducibility has not been used yet.)

To complete the proof, suppose now that \eqref{eq:condthm+-} holds
true.
Proposition \ref{prop:time} with $i=\ell$ combined with Proposition \ref{prop:time3}
first implies that $\|f_{i\ell}(e)\|=1$ for any $i$ and $\ell$. With the additional condition that
the Markov chain $G_n$ is irreducible, Proposition \ref{prop:time3} further implies that
$P^e_{\alpha i}(\tau_\ell<\infty)=1$ for any $(\alpha,i,\ell)\in
\{1,\ldots,m\}\times \dZ^2$. By using \eqref{eq:eq1}, it suffices to
show that $P^e_{\alpha \ell}(\cA_\ell) = 1$ for every
$\alpha\in\{1,\ldots,m\}$. Now, by hypothesis, $ \cU_\ell$,  defined
by $( \cU_\ell)_{\alpha\beta} = \left( f^{(\beta)}_{\ell\ell}\right)_\alpha$,
$\alpha,\beta\in\{1,\ldots,m\}$, is a stochastic matrix, so $
P^e_{\alpha \ell}\left(\tau_\ell^{(k)} < \infty\right)=1$ for every
$k\ge 1$, by using \eqref{eq:kthvisit2}. As a result,
$P^e_{\alpha \ell}(\cA_\ell) \ge P^e_{\alpha \ell}(\cN_\ell
= \infty) =1$.
\end{proof}

\subsection{Proof of Theorem \ref{thm:main}}\label{pf:mainthm}

\begin{proof} One first proves that  $\max(\gamma_+,\gamma_-)=0$. In
fact, it follows from Proposition \ref{prop:time4} that
$\max(\gamma_+,\gamma_-)\le 0$. One now shows by contradiction that
it is impossible to have $\gamma_+<0$ and $\gamma_-<0$. So suppose
that $\max(\gamma_+,\gamma_-)<0$. Since \eqref{eq:sum} holds by
Proposition \ref{prop:exit}, it follows that
$$
\|f_{0,\ell}(e)\|+\|f_{0,-\ell}(e)\| \ge  P^e_{\alpha
0}(\tau_\ell < \infty ) + P^e_{\alpha 0}(\tau_{-\ell} < \infty) \ge
1,
$$
which is impossible since $\gamma_+<0$ and $\gamma_-<0$. Hence,
$\max(\gamma_+,\gamma_-)=0$.

It follows from Propositions \ref{prop:time3}-\ref{prop:time4}
that if  $\gamma_+ <0$, then  for any $\alpha\in \{1,\ldots,m\}$ and
any $i> \ell$, $P^e_{\alpha i}(\tau_\ell <\infty)<1$, $e$ a.s.
Similarly, if $\gamma_-<0$, then for any $\alpha\in \{1,\ldots,m\}$
and any $i< \ell$, $P^e_{\alpha i}(\tau_\ell <\infty)<1$, $e$ a.s.

Next, if $\gamma_-=0$, then $\|f_{01}\|=1$ a.s. by
Proposition \ref{prop:time4}. It then follows from the irreducibility of $(G_n,X_n)$
 that $P^e_{\alpha i}(\tau_\ell < \infty)=1$, $e$ a.s.,
for any $\alpha\in \{1,\ldots,m\}$ and any $i<\ell$. In fact, if for a given $e$, $P^e_{\alpha i}(\tau_\ell < \infty)=1$, then it follows that $P^e_{\beta i}(\tau_\ell < \infty)=1$, for all $\beta\in \{1,\ldots,m\}$.

Similarly, if
$\gamma_+=0$, then $\|f_{10}\|=1$ a.s. by Proposition
\ref{prop:time4}, so the irreducibility of $(G_n,X_n)$ entails
that $P^e_{\alpha i}(\tau_\ell <\infty)=1$, $e$ a.s.,  for any
$\alpha\in \{1,\ldots,m\}$ and any $i>\ell$. As a result, using
Lemma \ref{lem:main0}, one gets (1), (2), and (3).
\end{proof}

\subsection{Proof of Theorem \ref{thm:oseledecfull}}\label{pf:thmoseledec}

\begin{proof}
 By Oseledec's Theorem (Theorem \ref{thm:oseledec}), there exists
 constants  $-\infty \le \bar\lambda_{2m} \le
\bar\lambda_{2m-1} \le \cdots \le \bar\lambda_1 <\infty$ with the
following properties:
\begin{enumerate}
\item[(a)] With probability 1, the random sets
$$
\bar V_q = \bar V_q(e) = \left\{v\in \mathbb{R}^{2m}:
\lim_{n\to\infty} n^{-1}\log\left(\|A_{n-1}(e) \cdots
A_0(e) v\| \le \bar\lambda_q \right)\right\}
$$
are linear subspaces. The map $e \mapsto {\bar V}_q(e)$ is
measurable and  ${\bar
V}_q(Te) =
A_0(e){\bar V}_q(e)$.\\

\item[(b)] ${\rm Dim}({\bar V}_q) = {\rm card}\{i: \bar\lambda_i \le
\bar\lambda_q\}$.\\

\item[(c)] Set $ {\bar V}_{2m+1} = \{0\}$ and let $i_1 =1 < i_2 \cdots < i_{p+1}=
2m+1$ be the unique indices at which $\bar\lambda_i$ jumps, i.e.,
$\bar\lambda_1 = \bar\lambda_2 = \cdots = \bar\lambda_{i_2-1} > \bar\lambda_{i_2}
\cdots$. Then for $v\in {\bar V}_{i_{s-1}}\setminus {\bar V}_{i_s}$,
one has
$$
\lim_{n\to\infty} n^{-1}\log\left(\|A_{n-1} \cdots A_0 v\|\right) =
\bar\lambda_{i_{s-1}}, \quad 2\le s\le p+1.
$$
\end{enumerate}

Since each $A_i$ is invertible, it then follows from Oseledec's
Theorem that for any $\ell\in \dZ$, the product $A_{n-1} \cdots A_0$
can be replaced with the product $A_{\ell+n} \cdots A_{\ell+1}$ and
one still gets the same constants $\bar \lambda_j$ and the
dimensions of the associated subspaces are exactly the same.

Let  $\ell$ be given. For $i>\ell$, set ${\bar v}_{i}^{(\beta)} =
\left[\begin{array}{c}
 \tilde f^{(\beta)}_{i\ell}\\ \tilde f^{(\beta)}_{i-1,\ell}\end{array}\right]$, $i>\ell$, $\beta\in\{1,\ldots,m\}$. Using \eqref{eq:main1},
one can write
\begin{equation}\label{eq:vbarbeta} {\bar
v}_{i+1}^{(\beta)} = A_i {\bar v}_i^{(\beta)} = A_i \cdots
A_{\ell+1} {\bar v}_{\ell+1}^{(\beta)},\quad i
>\ell.
\end{equation}

Since $\tilde f^{(\beta)}_{\ell\ell} = e^{(\beta)}$ for every $\beta\in
\{1,\ldots,m\}$, it follows that the vectors ${\bar
v}_{\ell+1}^{(\beta)}$ are linearly independent, and they belong to
$\bar V_0$. As a result, $\bar d_0 = \dim{\bar{V}_{0}} \ge m$. Also,
$A_i \un =\un$, so
$\left[\begin{array}{c} \mathbf{1} \\
\mathbf{1}\end{array}\right]\in \bar{V}_{0}$. Therefore $\bar d_0 > \bar d_{0-}$.

\subsubsection{Proof of (1)}\label{sssec:Qinv+}

Suppose $\bar d_0=m$. Then for almost every environment, there
is a  random vector $\theta\in \dR^m$ such that $\sum_{\beta=1}^m
\theta_\beta v_{\ell+1}^{(\beta)} =\un$. In particular,
$\sum_{\beta=1}^m \theta_\beta \tilde f^{(\beta)}_{\ell+1,\ell}
=
\mathbf{1}$ a.s.  and $ \sum_{\beta=1}^m \theta_\beta
\tilde f^{(\beta)}_{\ell,\ell} = \mathbf{1}$ a.s., which implies $ \sum_{\beta=1}^m \theta_\beta
\tilde f^{(\beta)}_{\ell,\ell} =  \sum_{\beta=1}^m \theta_\beta
e^{(\beta)} =\theta=\mathbf{1}$.
It then follows that $\sum_{\beta=1}^m \bar v_{\ell+1}^{(\beta)} = \un$, and
one can deduce from \eqref{eq:vbarbeta} that for any $i>\ell$,
$\sum_{\beta=1}^m \bar v_{i}^{(\beta)} = \un$. Hence, for
any $i>\ell$, one has $\tilde f_{il}(e) = f_{i\ell} (e) = \bf{1}$,
and consequently, for any
$\alpha\in \{1,\ldots,m\}$,
$$
\left(\tilde f_{il}(e)\right)_\alpha =  P^e_{\alpha i}(\tau_\ell <\infty ) = 1 \qquad e \mbox{ a.s.}.
$$
As a result, $\gamma_+=0$. This completes the proof of (1).

\subsubsection{Proof of (2)}  The proof proceeds somewhat along the lines of \citet{Key:1984}.
First note that for any (possibly random) element $h_\ell \in \bar V_{0-}$ with $V_{0-}$ as in Section \ref{sec:hitting},
we can define a sequence
$h_i = A_i h_{i-1}$ for $i>\ell$ with the property that  $h_i $ has the form $h_i =  \left[\begin{array}{c} z_{i+1} \\
z_i\end{array}\right]$ for some (unique, possibly random) sequence $z_\ell, z_{\ell+1}, \ldots\in \mathbb{R}^m$,   because of the structure of $A_i$ defined in Section \ref{sec:hitting}.
Set $g(\alpha,i) =(z_i)_\alpha$, $\alpha\in \{1,\ldots,m\}$, $i\ge \ell$
and notice that, because of $h_\ell \in \bar V_{0-}$,
$\lim_{n\to\infty} g(\alpha,n)=0$ ensues.

Let $(G_n,Y_n)$ be the Markov chain starting at $(\alpha,i)$ associated
with $(G_n,X_n)$ but absorbed on $\{1,\ldots,m\}\times \{\ell\}$,
and set $\cM_n = g(G_n,Y_n)$.
Since $\lim_{n\to\infty} g(\alpha,n)=0$, $\cM$ is a bounded
martingale with respect to its natural filtration
(provided the initial sigma field is enlarged for $z_i$ to be measurable with respect to it)
started at $ \cM_0 = g(\alpha,i)$.  Because of its random walk
nature, either $(G_n,Y_n)$  is absorbed on the boundary or $\limsup_{n\to\infty}Y_n
= +\infty$. As a result, by the martingale convergence theorem, it
follows that $\cM_n \to \cM_\infty$, so
$g(\alpha,i) = E(\cM_0) = E(\cM_\infty)$ for any $i>\ell$ and any $\alpha\in\{1,\ldots,m\}$.\\

Suppose first that $\bar d_{0-}\ge  m$. Let ${\bar v}_{-1}, \ldots, \bar v_{-m}$ be a set of (possibly random)
linearly independent vectors in $\bar V_{0-}$. The last $m$ components of
${\bar v}_{-1}, \ldots, \bar v_{-m}$ are linearly independent. If
not, there exists a member $h_\ell \neq 0$ of $\bar
V_{0-}$ so that its last $m$ components are zero.
In this case, it follows that $\cM_\infty =0$, so $g(\alpha,i) = E(\cM_0) = E(\cM_\infty) = 0$.
Since the latter is true for any $i>\ell$ and any $\alpha\in\{1,\ldots,m\}$, one may conclude that $h_\ell\equiv 0$,
contradicting the assumption. Thus, the last $m$
components of ${\bar v}_{-1}, \ldots, \bar v_{-m}$ are linearly
independent. Because of this, there exists $h_\ell \in \bar V_{0-}$ such that
its last $m$ components are $1$. We set $z_\ell={\bf 1}$ henceforth.

Recall that
for any $\alpha\in \{1,\ldots,m\}$,
$\left(\tilde f_{i\ell}(e)\right)_\alpha = P^e_{\alpha
i}\left(\tilde \tau_\ell<\infty\right)$ and define  ${w}_{i} = \left[\begin{array}{c}
 \tilde f_{i+1,\ell} \\ \tilde f_{i\ell}\end{array}\right]$, $i\ge \ell$. By \eqref{eq:main1},
we get $ w_i = A_i w_{i-1}$, $i >\ell$. Note also that
$\tilde f_{\ell\ell}={\bf 1}$ by definition.
Now set $\tilde \cM_n = \left(\tilde f_{Y_n,\ell}\right)_{G_n}$; $\tilde \cM$ also forms a bounded
martingale with $\tilde \cM_0 = \left(\tilde f_{i\ell}(e)\right)_\alpha$,
and
\begin{eqnarray*}
\left(\tilde f_{i\ell}\right)_\alpha &= & E(\tilde \cM_0) = E(\tilde \cM_\infty) = E\{\tilde \cM_\infty
\I(\tilde \tau_\ell<\infty)\}+E\{\tilde \cM_\infty \I(\tilde \tau_\ell=\infty)\}\nonumber\\
&=& \left(\tilde f_{i\ell}\right)_\alpha+ E\{\tilde \cM_\infty \I(\tilde \tau_\ell
=\infty)\},
\end{eqnarray*}
since $\tilde \cM_\infty = 1$ on $\{\tilde \tau_\ell <\infty\}$. As  a result, $
E\{\tilde \cM_\infty \I(\tilde \tau_\ell =\infty)\}=0$.
The bounded martingale $\tilde \cD_n = \tilde \cM_n - \cM_n $ satisfies
\begin{eqnarray*}
\left(\tilde f_{i\ell} - z_i\right)_\alpha &= & E(\tilde \cD_0) = E(\tilde \cD_\infty) =
E\{\tilde \cD_\infty \I(\tilde \tau_\ell=\infty)\} \\
 &=&
E\{\tilde \cM_\infty \I(\tilde \tau_\ell =\infty)\} - 0 = 0,
\end{eqnarray*}
since $\tilde f_{\ell\ell}-z_\ell= 0$ and
$\limsup_{n\to\infty}Y_n = +\infty$ on $\{\tilde \tau_\ell = \infty\}$
implies $\lim_{n\to\infty}g(G_n,Y_n)\I(\tilde \tau_\ell=\infty) = 0$.

Hence $\tilde f_{i\ell} = z_i$ for any $i\ge \ell$. It then follows that $\gamma_+ <0$, and
so $\lim_{i\to \infty} P^e_{\alpha i}(\tau_\ell <\infty )=0$, $e$ a.s.,  for any
$\alpha\in\{1,\ldots,m\}$.

To complete the proof of (2), suppose now that $\gamma_+<0$. Combined with
equation \eqref{eq:vbarbeta} this asumption implies
${\bar v}_{i}^{(\beta)} =
\left[\begin{array}{c}
 \tilde f^{(\beta)}_{i\ell}\\ \tilde f^{(\beta)}_{i-1,\ell}\end{array}\right]\in \bar V_{0-}$
 for every $i>\ell$ and $\beta\in\{1,\ldots,m\}$. Since the $m$ vectors
 ${\bar v}_{\ell+1}^{(\beta)}$ are linearly independent, we conclude $\bar d_{0-} \ge m$.

\subsubsection{Proofs of (3) and (4)}
They are similar to those of (1) and (2).
In fact, setting ${\tilde v}_{i}^{(\beta)} =
\left[\begin{array}{c}
 \tilde f^{(\beta)}_{i-1} \\ \tilde f^{(\beta)}_{i}\end{array}\right]$, $i<\ell$, $\beta\in\{1,\ldots,m\}$, and
 using \eqref{eq:main1},
one can write \begin{equation}\label{eq:vtildebeta} {\tilde
v}_{i-1}^{(\beta)} = B_i {\tilde v}_i^{(\beta)},\quad i <\ell,
\end{equation}
with $B_i =  \left(\begin{array}{cc} N_i^{-1} & -\sigma_i^{-1}\\
I & 0 \end{array}\right) $, $M_i$, $N_i$ as in \eqref{eq:MN} and
 $\sigma_i = M_i^{-1} N_i$. We have
$B_i = G A_i^{-1} G$, where $G = \left(\begin{array}{cc} 0 & I\\
 I & 0\end{array} \right)$ and  $A_i^{-1} =\left(\begin{array}{cc} 0 & I\\
 -\sigma_i^{-1} & N_i^{-1}\end{array} \right)$.
As a result,
 $B_{\ell-n} \cdots B_{\ell-1} =
 G (A_{\ell-1} \cdots A_{\ell-n})^{-1} G$.
\end{proof}

\subsection{Proof of Theorem \ref{thm:oseledecgen}}\label{pf:oseledecgen}

Recall that from \eqref{eq:main1},
$
\tilde f^{(\beta)}_{i\ell} = Q\Delta_i \tilde f^{(\beta)}_{i+1,\ell}+
Q(I-\Delta_i) \tilde f^{(\beta)}_{i-1,\ell}$, $i\neq \ell, \beta\in
\{1,\ldots,m\}$,
with  $\tilde f^{(\beta)}_{\ell\ell} = e^{(\beta)}$. Also, since $\tilde f_{i\ell} = \sum_{\beta=1}^m \tilde f^{(\beta)}_{i\ell}$, it follows that
$
\tilde f_{i\ell} = Q\Delta_i \tilde f_{i+1,\ell}+
Q(I-\Delta_i) \tilde f_{i-1,\ell}$, $i\neq \ell$,
with $\tilde f_{\ell\ell} = \mathbf{1}$.

Let $\tilde f^{(1,\beta)}_{i\ell}$ and $\tilde f^{(2,\beta)}_{i\ell}$ represent respectively the first $r$ components and the last $m-r$ components of $\tilde f^{(\beta)}_{i\ell}$. For simplicity, set $\check f^{(\beta)}_{i\ell} = \tilde f^{(1,\beta)}_{i\ell}$.

It then follows from the representation of $Q$ and \eqref{eq:main1} that for any $i\neq \ell$ and any $\beta\in \{1,\ldots,m\}$,
\begin{eqnarray}
\label{eq:prem1}
\check f^{(\beta)}_{i\ell} &=& \pi^{(1)}\Delta_i^{(1)}\check f^{(\beta)}_{i+1,\ell}+ \pi^{(2)}\Delta_i^{(2)}\tilde f^{(2,\beta)}_{i+1,\ell}\\
&&  +\pi^{(1)}\left(I-\Delta_i^{(1)}\right)\check f^{(\beta)}_{i-1,\ell}+ \pi^{(2)}\left(I-\Delta_i^{(2)}\right)\tilde f^{(2,\beta)}_{i-1,\ell},\nonumber\\
\qquad \tilde f^{(2,\beta)}_{i\ell} &=& \Theta\check f^{(\beta)}_{i\ell}. \label{eq:prem2}
\end{eqnarray}

 As a result, one can write, for any $i\neq \ell$ and any $\beta\in \{1,\ldots,m\}$,
\begin{equation}\label{eq:check_f_beta}
\tilde f^{(\beta)}_{i\ell} =  \left[\begin{array}{c} \check f^{(\beta)}_{i\ell} \\
\Theta  \check f^{(\beta)}_{i\ell}\end{array}\right].
\end{equation}
Furthermore, since $\tilde f_{\ell\ell} = \mathbf{1}$ and $\Theta \mathbf{1}_r = \mathbf{1}_{m-r}$, it follows from \eqref{eq:prem1}--\eqref{eq:check_f_beta} that  for any  $i\in \dZ$,
\begin{equation}\label{eq:check_f}
\tilde f_{i\ell} =  \left[\begin{array}{c} \check f_{i\ell} \\
\Theta  \check f_{i\ell}\end{array}\right].
\end{equation}

Next, using \eqref{eq:prem1}--\eqref{eq:check_f}, one can write, for any $|i-\ell|>1$ and any $\beta\in \{1,\ldots,m\}$,
\begin{equation}\label{eq:check_f_beta_2}
\check f^{(\beta)}_{i\ell} = \check M_i \check f^{(\beta)}_{i+1,\ell}+
\check N_i \check f^{(\beta)}_{i-1,\ell},
\end{equation}
while for any $i\neq \ell$,
\begin{equation}\label{eq:check_f2}
\check f_{i\ell} = \check M_i \check f_{i+1,\ell}+
\check N_i \check f_{i-1,\ell}.
\end{equation}

Also, for any $\beta\in \{1,\ldots,m\}$,
\begin{eqnarray}\label{eq:check_f3}
\check f^{(\beta)}_{\ell+1,\ell} &=& \check M_{\ell+1} \check f^{(\beta)}_{\ell+2,\ell}+ \pi (I-\Delta_{\ell+1})  e^{(\beta)}\\
\label{eq:check_f4}
 \check f^{(\beta)}_{\ell-1,\ell} &=& \pi \Delta_{\ell-1} e^{(\beta)} + \check N_{\ell-1} \check f^{(\beta)}_{\ell-2,\ell}
\end{eqnarray}

Now, set $h^{(\beta)}_{\ell\ell} =  \left[\begin{array}{c} e^{(\beta)}_r \\
\Theta  e^{(\beta)}_{r}\end{array}\right]$, where $e^{(\beta)}_{r}$ is the vector composed of the first $r$ components of $e^{(\beta)}$.
Next, taking linear combinations of the functions $f^{(\beta)}_{i\ell}$, $\beta\in \{1,\ldots,m\}$, one can define, for any $i\neq \ell$ and any  $\beta\in \{1,\ldots,r\}$,
\begin{equation}\label{eq:check_h1}
\check h^{(\beta)}_{i,\ell} =  \check M_{i}\check  h^{(\beta)}_{i+1,\ell}+ \check N_i \check  h^{(\beta)}_{i-1,\ell},
\end{equation}
where $\check h^{(\beta)}_{i\ell}$ is the first $r$ components of $h^{(\beta)}_{i\ell} = \left[\begin{array}{c} \check h^{(\beta)}_{i\ell} \\
\Theta  \check h^{(\beta)}_{i\ell} \end{array}\right]$. Also $\sum_{\beta=1}^r \check h^{(\beta)}_{\ell\ell} = \mathbf{1}_r$, and
$\sum_{\beta=1}^r \check h^{(\beta)}_{i\ell} = \check f_{i\ell} $,  for any  $i\in \dZ$.
Under Hypothesis \ref{hyp:inv},
one can then apply Oseledec's theorem. Define $\check d_{0-}$ as the dimension of the set $\check V_{0-}$ of vectors $v\in \dR^{2r}$ so that $\lim_{n\to\infty}\frac{1}{n}\log\left\|\check A_n \cdots \check A_\ell\right\| < 0$. Next, let  $\ell$ be given. For $i>\ell$, set ${\check v}_{i}^{(\beta)} =
\left[\begin{array}{c}
 \check h^{(\beta)}_{i\ell}\\ \check h^{(\beta)}_{i-1,\ell}\end{array}\right]$, $i>\ell$, $\beta\in\{1,\ldots,r\}$. Using \eqref{eq:check_f_beta_2}--\eqref{eq:check_f3},
one can write
\begin{equation}\label{eq:vbarbetacheck} {\check
v}_{i+1}^{(\beta)} = \check A_i {\check v}_i^{(\beta)} = \check A_i \cdots
\check A_{\ell+1} {\check v}_{\ell+1}^{(\beta)},\quad i
>\ell.
\end{equation}

Since the vectors $\check h^{(\beta)}_{\ell\ell}$ are linearly independent for any $\beta\in
\{1,\ldots,r\}$, it follows that the vectors ${\check
v}_{\ell+1}^{(\beta)}$ are linearly independent as well, and belong to
$\check V_0$. As a result, $\check d_0 = \dim{\check{V}_{0}} \ge r$. Also,
$\check A_i \un =\un$, so
$\left[\begin{array}{c} \mathbf{1} \\
\mathbf{1}\end{array}\right]\in \check{V}_{0}$.
Then one proceeds as in the proof of Theorem \ref{thm:oseledecfull}, with a few minor changes. \qed

\section{Explicit computations in the reducible case}\label{app:bolthausen}

Set $P_n = Q \Delta_n$, $Q_n = Q(I- \Delta_n)$ and $R_n=0$. Using \citet{Bolthausen/Goldsheid:2000} notations, for a given $a \in \mathbb{Z}$ and a given stochastic matrix $\rho$, for $n>a$,  define
$\psi_n = \psi_{n,a,\rho} = (I-R_n-Q_n\psi_{n-1})^{-1}P_n$, where $\psi_a = \psi_{a,a,\rho} = \rho$. It is not easy to compute $\psi_{n,a}$ for a small $a$, but if the probabilities are periodic with period $3$ for example, then $\psi_{0,-6n} = \psi_{6n,0}$,  $\psi_{0,-6n-1}= \psi_{6n+3,2}$,   $\psi_{0,-6n-2} =\psi_{6n+3,1}$, $\psi_{0,-6n-3} =\psi_{6n+3,0}$, $\psi_{0,-6n-4}  =\psi_{6n+6,2}$ and  $\psi_{0,-6n-5}  =\psi_{6n+6,1}$.

 In \citet[Theorem 1]{Bolthausen/Goldsheid:2000}, the authors claim that the limit $\xi_n = \lim_{a\to -\infty} \psi_{n,a,\rho}$ exists and is independent of $\rho$. One crucial hypothesis for the proof is the existence of only one communication class (a.e.). As noted in Remark \ref{rem:bolthausen}, this is usually not the case here, especially for Game C'.

We show next that \citet[Theorem 1]{Bolthausen/Goldsheid:2000} does not hold for Game C', because either the limit does not exist, or it depends on the initial value $\rho$. In fact, starting from $\rho = \left(\begin{array}{cc} 0 & 1\\ 0 & 1\end{array}\right)$, the limit does not exist.

Now, $\lim_{n\to\infty}\psi_{0,-2n} =
\left(\begin{array}{cc} 0 & 1\\  0.9987  &  0.0013\end{array}\right)$  and $\lim_{n\to\infty}\psi_{0,-2n-1} =
\left(\begin{array}{cc}  0.0039  &  0.9961\\  1  &  0\end{array}\right)$. Next, if one starts from $\rho = Q$, then $\xi_n \equiv Q$.
The above computations ensue from the following set of equations for Game C':
if  $\rho =  \left(\begin{array}{cc}  x_a &  1-x_a\\  1-y_a  &  y_a\end{array}\right)$, then $\psi_n = \left(\begin{array}{cc}  x_n &  1-x_n\\  1-y_n  &  y_n\end{array}\right)$, where,
for any $n\ge a$,
\begin{eqnarray*}
z_{n+1} &=&  p_{n+1}^{(1)} p_{n+1}^{(2)} + p_{n+1}^{(2)} q_{n+1}^{(1)} x_n  + p_{n+1}^{(1)} q_{n+1}^{(2)} y_n, \\
x_{n+1} &=&  p_{n+1}^{(1)} q_{n+1}^{(2)} y_n/z_{n+1},\\
y_{n+1} &=&  p_{n+1}^{(2)} q_{n+1}^{(1)} x_n/z_{n+1}.
\end{eqnarray*}

\section{Computation of $A_2A_1$ in Example \ref{ex:counter}}

It is easy to check that $A_2A_1$ is given by
$$
 \left[\begin{array}{cccc}
1+\rho_1^{(2)}+ \rho_1^{(2)}\rho_2^{(1)}  & 0 & 0 & -\rho_1^{(2)}  -\rho_1^{(2)}\rho_2^{(1)}\\
0 &  1+\rho_1^{(1)}+  \rho_1^{(1)}\rho_2^{(2)} &  -\rho_1^{(1)} -\rho_1^{(1)} \rho_2^{(2)} & 0 \\
0&  1+\rho_1^{(1)}  & -\rho_1^{(1)} & 0 \\
1+\rho_1^{(2)}  & 0 & 0 & -\rho_1^{(2)}\\
\end{array}
\right].
$$
Thus the eigenvalues are $1$,$1$, $\rho_1^{(1)} \rho_2^{(2)}$, and $\rho_1^{(2)} \rho_2^{(1)}$.
\bibliographystyle{apalike}
\def\cprime{$'$}

\end{document}